\numberwithin{equation}{section}
\theoremstyle{thmstyleone}%
\newtheorem{theorem}{Theorem}[section]
\newtheorem{tm}[theorem]{Theorem}
\newtheorem{pr}[theorem]{Proposition}%
\newtheorem{lm}[theorem]{Lemma}%
\newtheorem{co}[theorem]{Corollary}%
\theoremstyle{thmstyletwo}%
\newtheorem{re}[theorem]{Remark}%
\theoremstyle{thmstylethree}%
\newtheorem{df}[theorem]{Definition}%
\newcommand{\R}{\mathbb{R}}
\newcommand{\N}{\mathbb{N}}
\newcommand{\Z}{\mathbb{Z}}
\newcommand{\dd}{\mathrm{d}}
\newcommand{\ve}{\varepsilon}
\newcommand{\dis}{\displaystyle}
\newcommand{\ol}{\overline}
\newcommand{\Var}{\mathrm{Var}}
\newcommand{\nn}{\nonumber}
\newcommand{\E}{\mathbb{E}}
\newcommand{\bx}{\bm{x}}
\newcommand{\I}{\mathrm{I}}
\newcommand{\II}{\mathrm{I\hspace{-.1em}I}}
\newcommand{\III}{\mathrm{I\hspace{-.1em}I\hspace{-.1em}I}}
\newcommand{\IV}{\mathrm{I\hspace{-.1em}V}}
\begin{document}

\title[Fractional binomial distributions and their applications]{Fractional binomial distributions induced by
the generalized binomial theorem and their applications}


\author*[1]{\fnm{Masanori} \sur{Hino}}\email{hino@math.kyoto-u.ac.jp}

\author[2]{\fnm{Ryuya} \sur{Namba}}\email{rnamba@cc.kyoto-su.ac.jp}

\affil[1]{\orgdiv{Department of Mathematics, Graduate School of Science}, \orgname{Kyoto University}, \orgaddress{\city{Kyoto}, \postcode{606-8502}, \country{Japan}}}

\affil[2]{\orgdiv{Department of Mathematics, Faculty of Science}, \orgname{Kyoto Sangyo University}, \orgaddress{\city{Kyoto}, \postcode{603-8555}, \country{Japan}}}

\abstract{We develop a fractional extension of the classical binomial distribution and the associated Bernstein operator, formulated within the framework of the generalized binomial theorem (Hara and Hino [Bull.\ London Math.\ Soc.\ {\bf 42} (2010), 467--477]).
This provides a new probabilistic structure not representable as the law of 
the sum of independent and identically distributed random variables. 
Despite this nonstandard nature, 
we establish several of its fundamental analytic and probabilistic properties, 
including limit theorems,
through a unified framework based on the generalized binomial theorem.
We further analyze the properties of the fractional Bernstein operator associated with the fractional binomial distribution. In particular, we prove that the iterates of the operator converge to a generalized Wright--Fisher diffusion semigroup after a proper diffusive rescaling.
}

\keywords{fractional binomial distribution, 
fractional Taylor expansion, 
fractional Bernstein operator}


\pacs[MSC Classification]{60E05, 26A33, 60F05, 41A36.}

\maketitle

\tableofcontents

\section{{\bf Introduction}}
\label{Sect:Introduction}

\subsection{Fractional binomial distribution and fractional Bernstein operator}
The binomial distribution and the associated Bernstein operator form one of the classical bridges between probability and approximation theory.
The classical binomial distribution  
    \[
        \mu_x^{(n)}(\dd z):=\sum_{j=0}^n \binom{n}{j}x^j(1-x)^{n-j}\delta_{j}(\dd z), \qquad n \in \N,\ x \in (0, 1),
    \]
arises naturally from the classical binomial theorem
$(x+y)^n=\sum_{j=0}^n \binom{n}{j}x^jy^{n-j}$. 
Here, $\delta_j$ denotes the Dirac measure at $j$. 
It corresponds to the law of the sum of independent and 
identically distributed (i.i.d.~in short) Bernoulli random variables, 
and induces the celebrated  
\emph{Bernstein operator}, acting on 
the set $C([0, 1])$ consisting of continuous functions on $[0, 1]$, 
defined as
    \begin{equation*}
        B_nf(x):=\int_{\R} f\left(\frac{z}{n}\right) \, \mu_{x}^{(n)}(\dd z)
        =\sum_{j=0}^n \binom{n}{j}x^j(1-x)^{n-j}f\left(\frac{j}{n}\right), \qquad f \in C([0, 1]).
    \end{equation*}
Bernstein proved in \cite{Bernstein}
that  
$\|B_n f - f\|_\infty \to 0$ holds as $n \to \infty$
for $f \in C([0, 1])$, where $\|\cdot\|_\infty$ denotes the supremum norm. 
We refer to e.g., \cite{Bustamante, Altomare, Alt-C} for further details. 
Moreover, 
    \[
    B_n^k:=\underbrace{B_n \circ \cdots \circ B_n}_{k}, \qquad k \in \N,
    \] 
forms a discrete semigroup, whose certain scaling limit as $n \to \infty$ yields 
the Wright--Fisher diffusion semigroup studied in population genetics. See \cite{KZ, EK, KYZ18}. 

While such a classical theory has been studied intensively and extensively in probability theory, 
the \emph{fractional} analogues of such structures remain largely unexplored. 
It is worth noting that the \emph{generalized binomial theorem} obtained by Hara--Hino~\cite{HH} provides a natural framework 
for such an extension. 
In general, for $z \in \mathbb{C} \setminus \{x \in \R \mid x \le 0\}$ 
and $\gamma \in \R$,
the power $z^\gamma$ is defined as $\exp(\gamma \,\mathrm{Log}\,z)$, 
where $\mathrm{Log}\,z$ is the principal value of $\log z$ 
so that $\mathrm{Log}\,1=0$. 
For $\alpha>0$, put 
\begin{equation}\label{Eq:definition-K_alpha}
    K_\alpha:=\{e^{i\theta} \in \mathbb{C} 
    \mid -\pi < \theta \le \pi, \, e^{i \theta \alpha}=1\}. 
\end{equation}
Note that $K_\alpha=\{1\}$ when $\alpha \in (0, 2)$ and 
that $-1 \in K_\alpha$ if and only if $\alpha/2 \in \N$. 
We define
    \[
    \binom{w}{z}:=\frac{\Gamma(w+1)}{\Gamma(z+1)\Gamma(w-z+1)}
    \]
for $w \in \mathbb{C} \setminus \{-k \mid k \in \N\}$
and $z \in \mathbb{C}$ in general, 
where $\Gamma(\cdot)$ denotes the gamma function. 
If $\infty$ appears in the denominator of the right-hand side of the above, 
then $\binom{w}{z}$ is regarded as zero. 
Also, $0^0$ is defined as 1. 
The generalized binomial theorem is stated as follows.

\begin{pr}[{\cite[Theorem~3.2]{HH}}]
\label{Prop:generalization-binomial-theorem-HH}
    For $\alpha>0$, $n \in \N$, and $\lambda \in (0, 1]$, 
    we have 
    \begin{align}\label{Eq:Generalized-binomial-theorem-HH}
        \alpha\sum_{j=0}^n \binom{\alpha n}{\alpha j} \lambda^{\alpha j}
        &= \sum_{\omega \in K_\alpha}(1+\lambda \omega)^{\alpha n}
            -\frac{\alpha \lambda^\alpha \sin \alpha \pi}{\pi}
            \int_0^1 t^{\alpha-1}(1-t)^{\alpha n} \nonumber \\
        &\quad\times \left\{
        \frac{1}{|t^\alpha-\lambda^\alpha e^{-i\alpha \pi}|^2}
        +\frac{\lambda^{\alpha n}}{|e^{-i\alpha \pi}-(\lambda t)^\alpha|^2}
        \right\} \, \dd t. 
    \end{align}
    Here, the second term on the right-hand side is regarded as zero if $\alpha/2 \in \N$. 
\end{pr}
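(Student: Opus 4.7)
The plan is to prove \eqref{Eq:Generalized-binomial-theorem-HH} by contour integration in the complex plane, where the right-hand side decomposes naturally into a residue sum---the terms $(1+\lambda\omega)^{\alpha n}$ for $\omega \in K_\alpha$---and a branch-cut contribution---the remainder integral. The presence of $\sin \alpha \pi$ in the remainder strongly suggests it originates from the jump $e^{\ii \alpha\pi} - e^{-\ii\alpha\pi} = 2\ii \sin\alpha\pi$ of a multi-valued power function across its cut.

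First, I would rewrite each term on the left-hand side using the Hankel integral representation
\[
\frac{1}{\Gamma(\alpha j +1)} = \frac{1}{2\pi \ii}\int_H e^s s^{-\alpha j - 1}\,\dd s,
\]
so that, after interchanging summation and integration and setting $k=n-j$,
\[
\alpha \sum_{j=0}^n \binom{\alpha n}{\alpha j}\lambda^{\alpha j} = \frac{\alpha \Gamma(\alpha n +1)}{2\pi \ii}\int_H \frac{e^s}{s}(\lambda/s)^{\alpha n}\sum_{k=0}^n \frac{(s/\lambda)^{\alpha k}}{\Gamma(\alpha k+1)}\,\dd s.
\]
The inner truncated Mittag-Leffler-type sum should admit a useful closed form, or at least be amenable to direct deformation. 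The next step is to collapse the Hankel contour onto the branch cut of $s^\alpha$ along the negative real axis. During this deformation the contour sweeps past finitely many simple poles of the integrand; in the variable $\omega = -s/\lambda$ these are precisely the solutions of $\omega^\alpha = 1$ on the principal branch, i.e., the elements of $K_\alpha$, and their residues evaluate to $(1+\lambda\omega)^{\alpha n}$. The collapsed contour leaves a real integral carrying the factor $\sin\alpha\pi/\pi$. Splitting this integral by the position of $|s|$ relative to $\lambda$, and applying an inversion substitution on the outer portion to normalize both halves to the interval $[0,1]$, should yield the two summands $1/|t^\alpha - \lambda^\alpha e^{-\ii\alpha\pi}|^2$ and $\lambda^{\alpha n}/|e^{-\ii\alpha\pi}-(\lambda t)^\alpha|^2$; the $(1-t)^{\alpha n}$ weight appears upon rewriting the surviving integrand via a standard beta-integral identity.

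The principal obstacle will be the contour bookkeeping as $\alpha$ varies: the set $K_\alpha$ gains elements each time $\alpha$ crosses an even integer, and the principal-sheet structure of both $s^\alpha$ and $(1+\lambda s)^{\alpha n}$ shifts accordingly. Verifying that exactly the intended residues are picked up in every regime, and matching the branch-cut integral precisely against the stated symmetric form, constitutes the core technical work. The degenerate case $\alpha/2 \in \N$, where $\sin\alpha\pi = 0$ but the integrand may diverge, must then be handled either as a limiting case or by a direct verification via the classical roots-of-unity filter for even integer $\alpha$.
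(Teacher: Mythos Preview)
Your high-level picture—a contour integral whose residues produce the $(1+\lambda\omega)^{\alpha n}$ terms and whose branch-cut contribution produces the $\sin\alpha\pi$ integral—is exactly right, and matches the architecture of the argument in \cite{HH} that the paper invokes. But the specific route you propose has a concrete gap at the residue step.

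After inserting the Hankel representation and summing, your integrand is
\[
\frac{e^s}{s}\left(\frac{\lambda}{s}\right)^{\alpha n}\sum_{k=0}^{n}\frac{(s/\lambda)^{\alpha k}}{\Gamma(\alpha k+1)}.
\]
The truncated Mittag-Leffler sum is a polynomial in $(s/\lambda)^{\alpha}$, hence entire on the cut plane; the only singularity of the whole expression (away from the cut of $s^{\alpha}$) is at $s=0$. There are \emph{no} poles at $s=-\lambda\omega$, $\omega\in K_\alpha$, so collapsing the Hankel contour cannot sweep past any and the terms $(1+\lambda\omega)^{\alpha n}$ never appear. The heuristic that ``solutions of $\omega^{\alpha}=1$'' should be poles is borrowed from a different integrand than the one you actually wrote down.

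The approach the paper relies on (Propositions~\ref{Prop:Fractional-Taylor-series-HH} and~\ref{Prop:Property-f-sharp}, from \cite{HH}) gets the poles in a different way. One works with $f(z)=(1+z)^{\alpha n}$ and the transform $f^{\#}(\xi)=\frac{1}{2\pi\ii}\int_C f(z)z^{-\xi-1}\,\dd z$, for which $f^{\#}(\xi)=\binom{\alpha n}{\xi}$. Summing the \emph{geometric} series $\sum_{j\ge0}(\lambda/z)^{\alpha j}=z^{\alpha}/(z^{\alpha}-\lambda^{\alpha})$ \emph{inside} the contour integral introduces the denominator $z^{\alpha}-\lambda^{\alpha}$, whose zeros are precisely $z=\lambda\omega$, $\omega\in K_\alpha$; deforming the contour then picks up the residues $f(\lambda\omega)=(1+\lambda\omega)^{\alpha n}$, and the cut along $(-1,0]$ yields the $\sin\alpha\pi$ integral. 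The finite sum $\sum_{j=0}^{n}$ is recovered by writing it as $\sum_{j\ge0}-\sum_{j\ge n+1}$ and handling the tail via the companion identity \eqref{Eq:Fractional-Taylor-series-2} (this is exactly the manoeuvre carried out in the proof of Theorem~\ref{Thm:Moment-fractional-binomial-distribution}, Step~2, in the special case $m=0$, $c=0$).

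If you want to rescue the Hankel route, you would need some independent mechanism to produce the $K_\alpha$ terms—for instance, replacing the truncated sum by the full Mittag-Leffler function $E_\alpha$ and invoking its exponential asymptotics $E_\alpha(w)\sim\alpha^{-1}\sum_{\omega}\exp(w^{1/\alpha}\omega)$, then controlling the truncation error separately. That is considerably more delicate than the $f^{\#}$/geometric-series argument, which gives the poles for free.
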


We note that Proposition~\ref{Prop:generalization-binomial-theorem-HH} also holds for $\lambda>0$ by analytic continuation in the parameter $\lambda$. 
For proving Proposition~\ref{Prop:generalization-binomial-theorem-HH}, 
the authors established the fractional-order Taylor-like series expansions 
with residual terms in \cite[Theorem~3.1]{HH}, which has the fractional calculus as its theoretical background.  
See Proposition~\ref{Prop:Fractional-Taylor-series-HH} for further details.
Incidentally, Proposition~\ref{Prop:generalization-binomial-theorem-HH}
is used to answer Lyons' conjecture relating to 
the \emph{neo-classical inequality}, which plays a crucial role 
in the proof of a fundamental theorem of rough path theory (see \cite[Theorem~2.2.1]{Lyons}).

Let $\alpha>0$ and $n \in \N$. 
By putting $\lambda=x/(1-x)$, $x \in (0, 1)$, in \eqref{Eq:Generalized-binomial-theorem-HH} and multiplying both sides by $(1-x)^{\alpha n}$, we have 
    \begin{align}
    Z_{\alpha, x}^{(n)}:\!
    &=\alpha\sum_{j=0}^n \binom{\alpha n}{\alpha j} x^{\alpha j}(1-x)^{\alpha(n-j)} \nn \\
    &= 1+\sum_{\omega \in K_\alpha \setminus \{1\}}(1-x+x\omega)^{\alpha n}
    -\frac{\alpha x^\alpha (1-x)^\alpha \sin \alpha \pi}{\pi}
    \int_0^1 t^{\alpha-1}(1-t)^{\alpha n} \nonumber \nn \\
    &\quad\times \left\{
    \frac{(1-x)^{\alpha n}}{|(t(1-x))^\alpha-x^\alpha e^{-i\alpha \pi}|^2}
    +\frac{x^{\alpha n}}{|(1-x)^\alpha e^{-i\alpha \pi}-(tx)^\alpha|^2}
    \right\} \, \dd t.
    \label{Eq:definition-normalized-constant}
\end{align}
Keeping in mind that the right-hand side of \eqref{Eq:definition-normalized-constant} 
is close to 1 when $n$ is sufficiently large,
we undertake a systematic study of the fractional binomial distribution and its associated fractional Bernstein operator,
which are defined as follows. 

\begin{df}[Fractional binomial distribution]
\label{Def:fractional-binomial-distribution}
    Let $\alpha > 0$, $n \in \N$, and $x \in [0, 1]$. 
    Then, a probability measure $\mu_{\alpha, x}^{(n)}$ defined as
    \[
        \mu_{\alpha, x}^{(n)}(\dd z)
        :=\frac{\alpha}{Z_{\alpha, x}^{(n)}}\sum_{j=0}^n
        \binom{\alpha n}{\alpha j} x^{\alpha j}(1-x)^{\alpha(n-j)}\delta_j(\dd z)
    \]
    is called the $\alpha$-fractional binomial distribution. 
\end{df}

    Note that
    $Z_{\alpha, 0}^{(n)}=Z_{\alpha, 1}^{(n)}=\alpha$, 
    $\mu_{\alpha, 0}^{(n)}=\delta_0$, and $\mu_{\alpha, 1}^{(n)}=\delta_1$. 

\begin{df}[Fractional Bernstein operator]
\label{Def:fractional-Bernstein-operator}
    Let $\alpha > 0$ and $n \in \N$.
    The $\alpha$-fractional Bernstein operator $B_{\alpha, n}$, acting on $C([0, 1])$,
    is defined by 
        \begin{align*}
        B_{\alpha, n}f(x):\!&=\int_{\R}f\left(\frac{z}{n}\right) \,  
        \mu_{\alpha, x}^{(n)}(\dd z) \\
        &= \frac{\alpha}{Z_{\alpha, x}^{(n)}}\sum_{j=0}^n
        \binom{\alpha n}{\alpha j}
        x^{\alpha j}(1-x)^{\alpha(n-j)}f\left(\frac{j}{n}\right),
        \qquad f \in C([0, 1]), \, x \in [0, 1]. 
        \end{align*}
\end{df}

\noindent
It is clear that $\mu_{1, x}^{(n)}$ and $B_{1, n}$ coincide with 
$\mu_x^{(n)}$ and $B_{n}$, respectively. 
In this sense, the fractional binomial distribution (resp.\ fractional Bernstein operator) is a natural generalization of the binomial distribution (resp.\ Bernstein operator) with additional parameter $\alpha$.
This family interpolates smoothly between the classical and the newly introduced fractional structures; the probability mass function of $\mu_{\alpha, x}^{(n)}$
depends smoothly on both $\alpha$ and $x$.   

The purpose of this paper is to develop a fractional framework that encompasses both $\mu_{\alpha,x}^{(n)}$ and $B_{\alpha,n}$ in a unified manner based on the generalized binomial theorem.
Although many of the resulting statements may appear elementary at first glance, their proofs often require techniques quite different from those used in the classical setting.
These differences arise because, in general, $\mu_{\alpha, x}^{(n)}$ is not expressed as the law of the sum of i.i.d.~random variables.
Our main contribution is to clarify, in a systematic and unified way, the probabilistic and analytic features of $\mu_{\alpha,x}^{(n)}$ and $B_{\alpha,n}$. To the best of our knowledge, no previous work has constructed fractional analogues of the binomial distribution and the Bernstein operator in the unified way developed here.

\subsection{Outline of the paper}
The remainder of the paper is organized as follows.

\smallskip
\noindent
\textbf{The first part (Section~\ref{Sect:fractional-binomial-distribution})}.
        We establish explicit expressions 
        and asymptotics for the moments of $\mu_{\alpha, x}^{(n)}$.  
        In particular, we have 
        \begin{equation}\label{Eq:mean_and_variance}
            \E[S_{\alpha, x}^{(n)}]=nx+O(e^{-\delta n})
            \quad \text{and} \quad 
            \Var(S_{\alpha, x}^{(n)})
            =\frac{1}{\alpha}nx(1-x)+O(e^{-\delta n})
        \end{equation}
        as $n \to \infty$ for some $\delta>0$, where 
        $S_{\alpha, x}^{(n)}$ is a random variable 
        obeying $\mu_{\alpha, x}^{(n)}$.
        This reveals that the variance scales inversely with $\alpha$. 
        See Figure~\ref{Fig:fractional_binomial_distribution}.
        We also obtain a uniform bound of $Z_{\alpha, x}^{(n)}$
        with respect to both $n \in \N$ and $x \in [0, 1]$, 
        and an explicit expression for the characteristic function 
        of $\mu_{\alpha, x}^{(n)}$ with its quantitative estimate.
        
\begin{figure}[t]
  \centering
  \includegraphics[width=12cm]{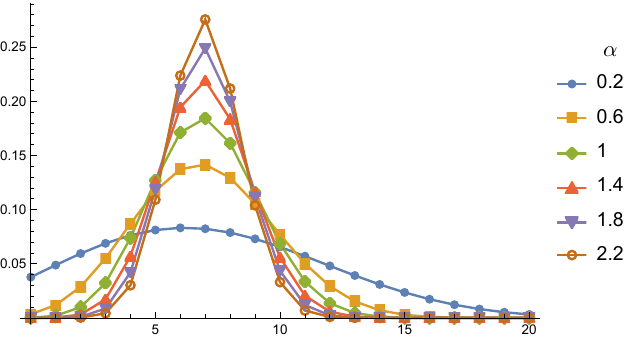}
  \caption{Variation of the fractional binomial distribution $\mu_{\alpha, x}^{(n)}$ with respect to $\alpha$. Shown are the probability mass functions for $n=20$ and $x=0.35$, where the parameter $\alpha$ takes the values $0.2$, $0.6$, $1$, $1.4$, $1.8$, and $2.2$.}
  \label{Fig:fractional_binomial_distribution}
\end{figure}

\smallskip
\noindent
\textbf{The second part (Section \ref{Sect:limit theorems})}.
Based on the properties given in Section~\ref{Sect:fractional-binomial-distribution}, 
we prove several limit theorems: the weak law of large numbers, 
the central limit theorem, and 
the law of small numbers for $\mu_{\alpha, x}^{(n)}$.
In particular, the limiting distribution in the latter case 
is a fractional variant of the Poisson distribution defined in terms of 
the \emph{Mittag-Leffler function} (see e.g., \cite{Herrmann, CO}). 

\smallskip
\noindent
\textbf{The third part (Section~\ref{Sect:fractional Bernstein operator})}.
        We investigate several properties of the fractional Bernstein operator $B_{\alpha, n}$ and its iterates. 
        We prove the uniform approximation result and 
        the Kelisky--Rivlin-type limit theorem for $B_{\alpha, n}$.
        We also prove 
        the convergence of the appropriately scaled operators
        $(B_{\alpha, n}^{\lfloor nt \rfloor})_{n\in\N_0}$, $t \ge 0$, 
        to the Wright--Fisher diffusion semigroup with generator  $\mathcal{L}_\alpha f(x)=(2\alpha)^{-1}x(1-x)f''(x)$  
        on $C^2([0, 1])$ as $n \to \infty$ in some sense. 
        Furthermore, we extend this convergence to the case where
        the fractional order is a continuous function $\alpha(x) \colon [0, 1] \to (0, \infty)$, and the limiting generator 
        is given by 
        $\mathcal{L}_{\alpha(\cdot)}f(x)=(2\alpha(x))^{-1}x(1-x)f''(x)$
        on $C^2([0, 1])$, which leads to  
        more-general diffusion processes on $[0, 1]$.

\subsection{Future perspectives}

We systematically generalize known results for the classical binomial distribution and Bernstein operator into their fractional counterparts by means of a unified framework based on the generalized binomial theorem.
This approach enables us to make substantial contributions to the study of probability distributions and related fields. 
We expect that this framework will shed light on the analytical and geometric aspects of such structures, and deepen the understanding of various mathematical models, including stochastic processes, applied statistics, and information geometry.
For example, 
the fractional binomial distributions may provide 
flexible models for count data which can describe both overdispersion and underdispersion, in view of \eqref{Eq:mean_and_variance}.
Furthermore, the study of the geometric properties of the statistical manifold formed by these distributions may lead to new developments in statistical theory and its applications.

\subsection{Remarks}
    The terminology \emph{fractional} 
    in probability theory is used in several different contexts. 
    For instance, a typical use can be seen in the notion of \emph{fractional Brownian motion}. 
    See \cite{Lee} for the fractional binomial distribution in this sense.
    Yet another use is derived from \emph{fractional calculus}. 
    We refer to \cite{CP} for the fractional binomial process 
    defined in terms of Caputo's fractional derivatives. 
    We also mention some existing notions of linear operators named 
    the {\it fractional} Bernstein operator. 
    In \cite{Mante}, the author studies the fractional Bernstein operator $(1-B_n)^\alpha$,
    $\alpha>0$, which differs completely from ours. 
    Another different formulation of the fractional Bernstein operator 
    from ours can be found in \cite{Yu}. 
    Our fractional Bernstein operator $B_{\alpha, n}$ appears to be new, being directly derived from the generalized binomial theorem.

\subsection{Notation}

We use the following notation.
\begin{itemize}
    \item
        We set $\N=\{1, 2, 3, \dots\}$ and $\N_0=\{0, 1, 2, 3, \dots\}$.
    \item
        $\mathrm{Re}(\omega)$ denotes the real part of 
        a complex number $\omega$. 
    \item 
        The indicator function of a set $A$ is denoted by $\bm{1}_A$. 
    \item 
        The cardinality of a set $A$ is denoted by $\#(A)$.
    \item 
        For $x, y \in \R$, we write 
        $x \vee y=\max\{x, y\}$ and $x \wedge y=\min\{x, y\}$. 
    \item 
        For $x \in \R$, $\lfloor x \rfloor$ and $\lceil x \rceil$ denote
        the greatest integer less than or equal to $x$, 
        and the smallest integer greater than or equal to $x$, respectively. 
    \item 
        For $a \in \R$, $\mathrm{sgn}\,a$ denotes the sign of $a$, that is, 
            \[
            \mathrm{sgn}\,a=\begin{cases}
            1 & \text{if }a>0, \\
            0 & \text{if }a=0, \\
            -1 & \text{if }a<0.
            \end{cases}
            \]
    \item 
        Let $f(n)$ and $g(n)$ be functions defined on $\N$. 
        Then, $f(n)=o(g(n))$ and $f(n)=O(g(n))$ as $n \to \infty$
        mean the usual little-$o$ and big-$O$ notations, respectively. 
        $f(n)=\Omega(g(n))$ means $g(n)=O(f(n))$. 
        Moreover, we use $f(n) \sim g(n)$ as $n \to \infty$ to indicate
        $f(n)/g(n) \to 1$ as $n \to \infty$. 
    \item 
        Let $X$ and $Y$ be two Banach spaces. 
        The domain and range of a linear operator $L \colon X \to Y$ are denoted by
        $\mathrm{Dom}(L) \subset X$ and $\mathrm{Range}(L) \subset Y$, respectively. 
    \item 
        A positive constant whose concrete expression is not important 
        is denoted by $C_{i, j}$ when it is the $j$-th positive constant in Section $i$ . 
\end{itemize}

\section{{\bf The fractional binomial distribution and its properties}}
\label{Sect:fractional-binomial-distribution}

\subsection{A fractional-order Taylor series}
\label{Subsect:fractional-order-Taylor-series}

Let $D=\{z \in \mathbb{C} \, : \, |z|<1\}$ be the unit open disc 
and $\ol{D}$ its closure in $\mathbb{C}$. 
Consider a continuous function $f$ on $\ol{D}$ such that $f$ is holomorphic in $D$. 
For each $\xi \in \R$, we define 
    \[
    f^{\#}(\xi)
    :=\int_{-1/2}^{1/2}f(e^{2\pi i x}) e^{-2\pi i x \xi} \, \dd x
    =\frac{1}{2\pi i}\int_{C}\frac{f(z)}{z^{\xi+1}} \, \dd z,
    \]
where $C$ denotes the oriented contour 
$(-1, 1) \ni t \longmapsto e^{i\pi t} \in \mathbb{C}$. 

The following is the fractional-order Taylor series with residual terms. 
Recall that the set $K_\alpha$ was defined in \eqref{Eq:definition-K_alpha}.

\begin{pr}[cf.~{\cite[Theorem~3.1]{HH}}]
\label{Prop:Fractional-Taylor-series-HH}
    Suppose that $\alpha>0$ and $\lambda \in (0, 1)$. 
    If $\alpha/2 \notin \N$, then we have 
    \begin{align}
        \alpha\sum_{j=0}^\infty f^\#(\alpha j)\lambda^{\alpha j}
        &= \sum_{\omega \in K_\alpha}f(\lambda \omega)
        -\frac{\alpha \lambda^\alpha \sin \alpha\pi}{\pi}\int_0^1 
        \frac{t^{\alpha-1}f(-t)}{|t^\alpha-\lambda^\alpha e^{-i\alpha \pi}|^2} \, \dd t,  
        \label{Eq:Fractional-Taylor-series-1}\\
        \alpha\sum_{j=-\infty}^{-1} f^\#(\alpha j)\lambda^{-\alpha j}
        &=\frac{\alpha\lambda^\alpha \sin \alpha \pi}{\pi}
        \int_0^1 
        \frac{t^{\alpha-1}f(-t)}{|e^{-i\alpha \pi}-(\lambda t)^\alpha|^2} \, \dd t. 
        \label{Eq:Fractional-Taylor-series-2}
    \end{align}
    If $\alpha/2 \in \N$, \eqref{Eq:Fractional-Taylor-series-1} and 
    \eqref{Eq:Fractional-Taylor-series-2} still hold by regarding 
    the second term of the right-hand side of \eqref{Eq:Fractional-Taylor-series-1} 
    and the right-hand side of \eqref{Eq:Fractional-Taylor-series-2} as $0$. 
\end{pr}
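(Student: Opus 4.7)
The plan is to represent each series as a contour integral over $C$ via the definition of $f^\#$, sum the resulting geometric series under the integral sign, and then evaluate the contour integral by the residue theorem on a keyhole contour that wraps around the branch cut of the principal power $z^\alpha$ on $(-1,0)$. For \eqref{Eq:Fractional-Taylor-series-1}, substituting the integral formula for $f^\#$ and interchanging sum and integral (justified by uniform convergence on $C$, since $|z|=1$ and $\lambda^\alpha<1$, together with boundedness of $f$ on $\ol{D}$) yields
\[
\alpha\sum_{j=0}^\infty f^\#(\alpha j)\lambda^{\alpha j}=\frac{\alpha}{2\pi i}\int_C \frac{f(z)z^{\alpha-1}}{z^\alpha-\lambda^\alpha}\,dz.
\]

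I would then close $C$ into a keyhole contour that tracks the unit circle, runs inward along the two edges of the cut $(-1,0)$, and is joined by a small circle of radius $\varepsilon$ around the origin. This closed contour encloses precisely the simple poles $z=\lambda\omega$, $\omega\in K_\alpha$; since $\frac{d}{dz}(z^\alpha-\lambda^\alpha)=\alpha z^{\alpha-1}$ cancels the numerator, each pole contributes residue $f(\lambda\omega)/\alpha$. The small-circle contribution is $O(\varepsilon^\alpha)\to 0$ as $\varepsilon\to 0$ (because $\alpha>0$), and the two straight segments combine via the jump identity
\[
\frac{1}{t^\alpha-\lambda^\alpha e^{-i\pi\alpha}}-\frac{1}{t^\alpha-\lambda^\alpha e^{i\pi\alpha}}=\frac{-2i\lambda^\alpha\sin\pi\alpha}{|t^\alpha-\lambda^\alpha e^{-i\pi\alpha}|^2}
\]
(after using $e^{i\pi(\alpha-1)}=-e^{i\pi\alpha}$ on the upper edge and its conjugate on the lower) into precisely the displayed branch-cut integral, reproducing \eqref{Eq:Fractional-Taylor-series-1}.

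For \eqref{Eq:Fractional-Taylor-series-2}, I reindex with $k=-j\ge 1$ and perform the identical summation to obtain
\[
\alpha\sum_{j=-\infty}^{-1}f^\#(\alpha j)\lambda^{-\alpha j}=\frac{\alpha\lambda^\alpha}{2\pi i}\int_C\frac{f(z)z^{\alpha-1}}{1-\lambda^\alpha z^\alpha}\,dz,
\]
where the geometric series again converges because $|\lambda^\alpha z^\alpha|=\lambda^\alpha<1$ on $C$. The decisive difference is that the poles of $(1-\lambda^\alpha z^\alpha)^{-1}$ now lie at $|z|=\lambda^{-1}>1$, outside the keyhole contour, so the residue sum is zero and only the branch-cut piece survives. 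A parallel jump computation, now built around $\mathrm{Im}(e^{-i\pi\alpha}-\lambda^\alpha t^\alpha)=-\sin\pi\alpha$, produces the denominator $|e^{-i\pi\alpha}-(\lambda t)^\alpha|^2$ and yields \eqref{Eq:Fractional-Taylor-series-2}.

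When $\alpha/2\in\N$, the function $z^\alpha$ is a polynomial, so there is no branch cut and the interior deformation is unnecessary; correspondingly, $\sin\pi\alpha=0$ makes the stated integrals vanish by the convention of the theorem, and the fact that $-1\in K_\alpha$ causes no issue because $z^\alpha-\lambda^\alpha$ is holomorphic across $(-1,0)$. The main obstacle in the argument is the branch-cut bookkeeping—choosing the principal values of $z^{\alpha-1}$ and $z^\alpha$ correctly on each edge, keeping orientations consistent, and controlling the vanishing of the small-circle remainder as $\varepsilon\to 0$; once this is executed carefully, both identities follow as routine applications of the residue theorem.
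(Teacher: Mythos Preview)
Your sketch is correct and is precisely the argument of \cite[Theorem~3.1]{HH}; the present paper does not give its own proof of this proposition but merely cites that reference. In particular, summing the geometric series to obtain the kernels $z^{\alpha-1}/(z^\alpha-\lambda^\alpha)$ and $\lambda^\alpha z^{\alpha-1}/(1-\lambda^\alpha z^\alpha)$, closing $C$ by a keyhole around the slit $(-1,0)$, reading off the residues at $z=\lambda\omega$ for $\omega\in K_\alpha$, and combining the two edge integrals via the conjugate-pair identity for the denominators is exactly how Hara and Hino proceed, so there is nothing to contrast.
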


\noindent
We note that these two equalities in Proposition~\ref{Prop:Fractional-Taylor-series-HH}
turn out to be true when $\lambda=1$ through the limit as $\lambda \nearrow 1$,
provided that 
    \[
    \sum_{j=-\infty}^\infty |f^\#(\alpha j)|<\infty.
    \]
Since it holds that 
    \[
    f^\#(\xi)
    =\begin{cases}
    \displaystyle \frac{1}{\xi!}\frac{\dd^\xi f}{\dd z^\xi}(0) & \text{if }\xi \ge 0, \\
    0 & \text{if }\xi<0, 
    \end{cases} \qquad \xi \in \Z, 
    \]
the identity \eqref{Eq:Fractional-Taylor-series-1} is identical to
the usual Taylor series expansion of $f$ at $z=0$ when $\alpha=1$. 
On the other hand, if $\xi \notin \Z$, we see that $\Gamma(\xi+1)f^\#(\xi)$ 
is regarded as the $\xi$-order fractional derivative of $f$ at $z=0$. 
We also recall the following result. 

\begin{pr}[cf.~{\cite[Proposition~2.4]{HH}}]
\label{Prop:Property-f-sharp}
    Let $T>0$ and define $f(z)=(1+z)^T$ on $\ol{D}$. 
    Then, we have 
        \[
        f^\#(\xi)=\binom{T}{\xi}, \qquad \xi \in \R.
        \]
\end{pr}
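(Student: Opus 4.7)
The strategy is to evaluate the contour integral
\[
f^\#(\xi)=\frac{1}{2\pi i}\int_C\frac{(1+z)^T}{z^{\xi+1}}\,\dd z
\]
by deforming $C$ onto the branch cut of $z^{\xi+1}$, reducing the problem to an Euler beta integral and then invoking the reflection formula for the gamma function.

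First I would fix the principal branch of $z^{\xi+1}$ (branch cut along $(-\infty,0]$); since $T>0$, $(1+z)^T$ extends continuously across $z=-1$ (with value $0$) and is holomorphic on the slit plane $\Omega=\mathbb{C}\setminus(-\infty,0]$. For each small $\epsilon>0$, the contour $C$ is homotopic in $\Omega$ to the keyhole contour $C_\epsilon$ consisting of the lower bank of $[-1,-\epsilon]$, the circle $|z|=\epsilon$ traversed counterclockwise, and the upper bank of $[-\epsilon,-1]$; since both $C$ and $C_\epsilon$ wind once counterclockwise around $0$, Cauchy's theorem yields $\int_C=\int_{C_\epsilon}$.

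Next, a direct computation on the two banks---where $\arg z=\mp\pi$ gives $z^{\xi+1}=r^{\xi+1}e^{\mp i\pi(\xi+1)}$ for $z=-r\mp i0$---shows that the bank integrals combine to
\[
-2i\sin(\pi\xi)\int_\epsilon^1\frac{(1-r)^T}{r^{\xi+1}}\,\dd r.
\]
I would assume temporarily that $\xi<0$; then the small-circle piece is $O(\epsilon^{-\xi})=o(1)$ as $\epsilon\to0$, and the bank integral converges to $-2i\sin(\pi\xi)\,B(-\xi,T+1)$. Applying the reflection formula $\Gamma(-\xi)\Gamma(1+\xi)=-\pi/\sin(\pi\xi)$ then gives
\[
f^\#(\xi)=-\frac{\sin(\pi\xi)}{\pi}\cdot\frac{\Gamma(-\xi)\Gamma(T+1)}{\Gamma(T-\xi+1)}=\frac{\Gamma(T+1)}{\Gamma(\xi+1)\Gamma(T-\xi+1)}=\binom{T}{\xi}.
\]

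Finally I would remove the restriction $\xi<0$ by analytic continuation: differentiation under the integral sign shows that $f^\#(\xi)$ is entire in $\xi\in\mathbb{C}$, and $\binom{T}{\xi}=\Gamma(T+1)/[\Gamma(\xi+1)\Gamma(T-\xi+1)]$ is entire since $1/\Gamma$ is entire, so agreement on the negative real axis forces the identity on all of $\R$. The main technical obstacle will be the careful branch bookkeeping on the two banks together with the rigorous justification of the contour deformation near $z=-1$, where $C$ pinches the cut; here the continuity $(1+z)^T\to 0$ for $T>0$ is precisely what allows the deformation to close up.
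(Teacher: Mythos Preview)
Your argument is correct. The paper itself does not prove this proposition---it is quoted from \cite{HH} without proof---so there is no in-paper argument to compare against. The route you take (deform $C$ to a keyhole contour along the cut of $z^{\xi+1}$, reduce the bank contributions to an Euler beta integral for $\xi<0$, apply the reflection formula, then extend to all $\xi$ by analytic continuation using that both $f^\#(\xi)$ and $\binom{T}{\xi}$ are entire) is the standard Hankel-contour computation and is sound. The only delicate step, which you correctly flag, is closing the deformation near $z=-1$: since $T>0$, $(1+z)^T\to 0$ there and the integrand extends continuously across the pinch point, so the short connecting arcs contribute nothing in the limit. One small remark: at negative integers $\xi$ the beta integral diverges and $\sin(\pi\xi)=0$, so the bank computation is formally indeterminate; it is cleanest to verify the identity on, say, an open interval of non-integer negative $\xi$ and then invoke the identity theorem, exactly as your final paragraph does.
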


\subsection{Moments of the fractional binomial distribution}
\label{Subsect:moments}

Let $\alpha > 0$, $n \in \N$, and $x \in (0, 1)$. 
We now derive the explicit expressions for the moments 
of the fractional binomial distribution $\mu_{\alpha, x}^{(n)}$ 
by applying Proposition~\ref{Prop:Fractional-Taylor-series-HH} to a certain holomorphic function. 
For $c \in \R$, we define a differential operator $\mathscr{D}_c$ 
acting on holomorphic functions $f$ on a domain of $\mathbb{C}$ by 
    \[
    \mathscr{D}_cf(z):=zf'(z)+cf(z).
    \]
We also define
\begin{equation}\label{Eq:definition-function-h_c^m}
    h_c^{(m)}(z)
    :=\mathscr{D}_c^m\{(1+z)^{\alpha n}\}, 
    \qquad z \in 
    \begin{cases}
    \mathbb{C} \setminus \{x \in \R \mid x \le -1\} & \text{if }\alpha n \notin \N, \\
    \mathbb{C} & \text{if }\alpha n \in \N
    \end{cases}
\end{equation}
for $m \in \N_0$. Moreover, we put
    \[
    (s)_0=1, \qquad 
    (s)_k=s(s-1) \cdots (s-k+1), \qquad k \in \N
    \]
for $s>0$. 
Then, we have the following explicit representation of $h_c^{(m)}$. 

\begin{lm}\label{Lem:Representation-h_c^m}
    For $c \in \R$, we define a sequence 
    $\{W_c(m, k)\}_{0 \le k \le m}$,  $m \in \N_0$,  by 
    \begin{equation}\label{Eq:Recurssive-formula-W_c}
        \begin{aligned}
            W_c(m, 0)&=c^m, \\
            W_c(m, k)&=(k+c)W_c(m-1, k)+W_c(m-1, k-1), \qquad k=1, 2, \dots, m-1, \\
            W_c(m, m)&=1. \\
        \end{aligned}
    \end{equation}
    Then, it holds that 
    \begin{equation}\label{Eq:Representation-h_c^m}
        h_c^{(m)}(z)
        =\sum_{k=0}^m W_c(m, k)(\alpha n)_k z^k (1+z)^{\alpha n - k} 
    \end{equation}
    for $z \in \mathbb{C} \setminus \{x \in \R \mid x \le -1\}$ if $\alpha n \notin \N$ 
    and $z \in \mathbb{C}$ if $\alpha n \in \N$.
    Moreover, each $W_c(m, k)$ is a polynomial in $c$ of degree $m-k$ with integer coefficients. 
\end{lm}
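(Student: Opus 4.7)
The plan is to prove both parts by induction on $m$, with the representation formula \eqref{Eq:Representation-h_c^m} as the main statement and the polynomial structure of $W_c(m,k)$ as a byproduct of the same induction.

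For the base case $m=0$, both sides of \eqref{Eq:Representation-h_c^m} reduce to $(1+z)^{\alpha n}$ (using the conventions $W_c(0,0)=1$, $(\alpha n)_0=1$), which is consistent with $W_c(m,m)=1$ and with $W_c(m,0)=c^m$ at $m=0$. For the inductive step, I would first record the key computation
\[
\mathscr{D}_c\bigl[z^k(1+z)^{\alpha n-k}\bigr]
=(k+c)\,z^k(1+z)^{\alpha n-k}+(\alpha n-k)\,z^{k+1}(1+z)^{\alpha n-(k+1)},
\]
which follows from $\mathscr{D}_c f(z)=zf'(z)+cf(z)$ and the product rule. Applying $\mathscr{D}_c$ termwise to the inductive expression for $h_c^{(m-1)}(z)$, using $(\alpha n)_k(\alpha n-k)=(\alpha n)_{k+1}$, and re-indexing the second sum by $k'=k+1$, the result collects into $\sum_{k=0}^{m}\bigl[(k+c)W_c(m-1,k)+W_c(m-1,k-1)\bigr](\alpha n)_k z^k(1+z)^{\alpha n-k}$, where by convention $W_c(m-1,-1)=W_c(m-1,m)=0$. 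Matching coefficients with the recurrence \eqref{Eq:Recurssive-formula-W_c} then confirms the formula, with the boundary cases $k=0$ giving $W_c(m,0)=c\cdot W_c(m-1,0)=c^m$ and $k=m$ giving $W_c(m,m)=W_c(m-1,m-1)=1$ (consistent with the stated base values).

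For the polynomial-in-$c$ claim, I would run a parallel induction on $m$. Integrality of the coefficients is immediate because the recurrence \eqref{Eq:Recurssive-formula-W_c} has integer-valued operations and integer initial data. For the degree statement, I would strengthen the inductive hypothesis to: \emph{$W_c(m,k)$ is a polynomial of degree $m-k$ with positive integer leading coefficient}. The base cases $W_c(m,0)=c^m$ (leading coefficient $1$) and $W_c(m,m)=1$ satisfy this. In the inductive step for $1\le k\le m-1$, the term $(k+c)W_c(m-1,k)$ has degree $1+(m-1-k)=m-k$ with positive leading coefficient, while $W_c(m-1,k-1)$ has degree $m-k$, also with positive leading coefficient; thus no cancellation occurs in the top coefficient and the sum again has degree exactly $m-k$ with positive leading coefficient.

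There is no real obstacle: the whole argument is a clean double induction, and the only thing that requires a moment's care is the bookkeeping in the inductive step—specifically, identifying the coefficient of $z^k(1+z)^{\alpha n-k}$ after applying $\mathscr{D}_c$ termwise and shifting the index, and, for the degree claim, tracking positivity of the leading coefficient so that it cannot vanish in the sum. Everything else is algebraic and automatic from \eqref{Eq:Recurssive-formula-W_c}.
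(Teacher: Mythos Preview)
Your proposal is correct and follows essentially the same induction as the paper's proof: both apply $\mathscr{D}_c$ termwise to the inductive expression for $h_c^{(m-1)}$, expand, re-index, and match against the recurrence \eqref{Eq:Recurssive-formula-W_c}. Your treatment of the degree claim is in fact slightly more careful than the paper's (which simply asserts it follows by induction), since your strengthened hypothesis of a positive leading coefficient rules out cancellation between the two degree-$(m-k)$ contributions.
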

See Table~\ref{Table:Whitney} for concrete expressions of some of $W_c(m, k)$.
\begin{table}[t]
 \centering
  \caption{Expressions of $W_c(m, k)$ for $m=0, 1, 2, 3, 4$.}
  \begin{tabular}{c|c c c c c}
   \diagbox{$m$}{$k$} & 0 & 1 & 2 & 3 & 4 \\ \hline
   0 & 1 & & & & \\
   1 & $c$ & 1 & & & \\
   2 & $c^2$ & $2c+1$ & 1 & & \\
   3 & $c^3$ & $3c^2+3c+1$ & $3c+3$ & 1 & \\
   4 & $c^4$ & $4c^3+6c^2+4c+1$ & $6c^2+12c+7$ & $4c+6$ & 1  \\
  \end{tabular} 
 \label{Table:Whitney}
\end{table}

\begin{proof}
    The proof is by induction. 
    When $m=0$, we have $h_c^{(0)}(z)=(1+z)^{\alpha n}$
    and \eqref{Eq:Representation-h_c^m} holds.
    Suppose that \eqref{Eq:Representation-h_c^m} holds
    for some $m \in \N_0$. 
    Then, we have 
    \begin{align*}
        h_c^{(m+1)}(z) 
        &= \mathscr{D}_c h_c^{(m)}(z) \\
        &= z (h_c^{(m)})'(z)+ch_c^{(m)}(z) \\
        &= z\sum_{k=0}^m W_c(m, k)(\alpha n)_k 
        \{kz^{k-1}(1+z)^{\alpha n-k} + (\alpha n-k)z^k (1+z)^{\alpha n - k-1}\} \\
        &\quad+c\sum_{k=0}^m W_c(m, k)(\alpha n)_k z^k (1+z)^{\alpha n - k} \\
        &=\sum_{k=1}^m \{(k+c)W_c(m, k)+W_c(m, k-1)\}(\alpha n)_k z^{k}(1+z)^{\alpha n-k}\\
        &\quad+W_c(m, m)(\alpha n)_{m+1} z^{m+1}(1+z)^{\alpha n-m-1}
        +cW_c(m, 0)(1+z)^{\alpha n}.
    \end{align*}
    Therefore, it follows from \eqref{Eq:Recurssive-formula-W_c} that the right-hand side coincides with 
    \[
    \sum_{k=0}^{m+1}W_c(m+1, k)(\alpha n)_kz^k(1+z)^{\alpha n -k},
    \]
    which implies that \eqref{Eq:Representation-h_c^m} is also true for the case of $m+1$. 
    The latter assertion can also be shown immediately 
    by induction, noting the recurrence formula \eqref{Eq:Recurssive-formula-W_c}. 
\end{proof}

\begin{re}\normalfont
    Let $c=0$. Then, the recurrence formula \eqref{Eq:Recurssive-formula-W_c}
    gives the {\it Stirling number of the second kind} $W_0(m, k)=S(m, k)$, 
    which is the number of ways to partition a set of $m$ objects into $k$ non-empty subsets. 
    Therefore, the sequence $\{W_c(m, k)\}$ is regarded as an extension of the 
    Stirling numbers of the second kind, which in combinatorics are referred to as the 
    ($c$-){\it Whitney numbers of the second kind}. 
    To the best of our knowledge, this notion was first introduced in \cite{Mezo}, 
    and indeed the theorem below tells us that $W_c(m, k)$ 
    appears when we express the moments of the fractional binomial distribution. 
\end{re}

By applying Lemma~\ref{Lem:Representation-h_c^m}, 
we obtain the following key identity to deduce the moments
of the fractional binomial distribution $\mu_{\alpha, x}^{(n)}$
in Definition~\ref{Def:fractional-binomial-distribution}. 

\begin{tm}
\label{Thm:Moment-fractional-binomial-distribution}
    Let $\alpha>0$, $n \in \N$, $x \in (0, 1)$,  $c \in \R$, and $m \in \N_0$. 
    If $\alpha n \notin \N$, suppose further that $m \le \alpha n$. 
    Then, we have 
    \begin{align}
        &Z_{\alpha, x}^{(n)}
        \sum_{j=0}^n (\alpha j + c)^m \mu_{\alpha, x}^{(n)}(\{j\}) \nn \\
        &=\sum_{k=0}^m W_c(m, k)(\alpha n)_k x^k
        +\sum_{k=0}^m W_c(m, k)(\alpha n)_k 
        \left\{\sum_{\omega \in K_\alpha \setminus\{1\}}
        (x\omega)^k(1-x+x\omega)^{\alpha n-k}\right\}  \nn \\
        &\quad-\frac{\alpha x^\alpha
        (1-x)^{\alpha(n+1)}\sin \alpha \pi}{\pi} 
        \int_0^1 \frac{t^{\alpha-1}h_c^{(m)}(-t)}{|(t(1-x))^\alpha - x^\alpha e^{-i \alpha \pi}|^2} \, \dd t \nn \\
        &\quad-(-1)^m \frac{\alpha x^{\alpha(n+1)}(1-x)^\alpha \sin \alpha \pi}{\pi}
        \int_0^1 \frac{t^{\alpha-1}h_{-(c+\alpha n)}^{(m)}(-t)}{|(1-x)^{\alpha}e^{-i \alpha \pi}-(tx)^\alpha|^2} \, \dd t.
    \label{Eq:Moment-fractional-binomial-distribution}
    \end{align}
    Here, if $\alpha \in \N$, then the third and fourth terms of the right-hand side are regarded as $0$.
\end{tm}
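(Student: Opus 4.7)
The plan is to apply Proposition~\ref{Prop:Fractional-Taylor-series-HH} to $f=h_c^{(m)}$ with $\lambda=x/(1-x)$, and then multiply both sides by $(1-x)^{\alpha n}$. By the definitions of $Z_{\alpha,x}^{(n)}$ and $\mu_{\alpha,x}^{(n)}$, the left-hand side of \eqref{Eq:Moment-fractional-binomial-distribution} equals $\alpha\sum_{j=0}^{n}(\alpha j+c)^m \binom{\alpha n}{\alpha j}x^{\alpha j}(1-x)^{\alpha(n-j)}$, so the task is to produce this expression on the right.

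First I would compute $(h_c^{(m)})^{\#}$. Combining the shift relation $[z^k(1+z)^{\alpha n-k}]^{\#}(\xi)=\binom{\alpha n-k}{\xi-k}$ (a direct consequence of Proposition~\ref{Prop:Property-f-sharp}) with Lemma~\ref{Lem:Representation-h_c^m} gives $(h_c^{(m)})^{\#}(\xi)=\sum_{k=0}^{m}W_c(m,k)(\alpha n)_k\binom{\alpha n-k}{\xi-k}=\binom{\alpha n}{\xi}\sum_{k=0}^{m}W_c(m,k)(\xi)_k$, where the last step is the one-line $\Gamma$-identity $(\alpha n)_k\binom{\alpha n-k}{\xi-k}=\binom{\alpha n}{\xi}(\xi)_k$. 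A short induction on $m$ using the recurrence \eqref{Eq:Recurssive-formula-W_c} together with $k(\xi)_k+(\xi)_{k+1}=\xi(\xi)_k$ then shows $\sum_{k=0}^{m}W_c(m,k)(\xi)_k=(\xi+c)^m$, so $(h_c^{(m)})^{\#}(\xi)=(\xi+c)^m\binom{\alpha n}{\xi}$.

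Next I would apply Proposition~\ref{Prop:Fractional-Taylor-series-HH} to $h_c^{(m)}$. The hypothesis $m\le\alpha n$ (when $\alpha n\notin\N$) is precisely what ensures each factor $(1+z)^{\alpha n-k}$ appearing in Lemma~\ref{Lem:Representation-h_c^m} is continuous on $\ol{D}$, so $h_c^{(m)}$ satisfies the regularity requirements of the proposition. Taking $\lambda=x/(1-x)$ (with the symmetry $(x,c)\leftrightarrow(1-x,-(c+\alpha n))$, which carries a factor of $(-1)^m$, covering the case $x\ge 1/2$), combining \eqref{Eq:Fractional-Taylor-series-1} with the previous step yields an expression for $\alpha\sum_{j=0}^{\infty}(\alpha j+c)^m\binom{\alpha n}{\alpha j}\lambda^{\alpha j}$. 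I would then reindex the tail $\sum_{j=n+1}^{\infty}$ via $j=n+l$ using the binomial symmetry $\binom{\alpha n}{\alpha(n+l)}=\binom{\alpha n}{-\alpha l}$; this rewrites the tail as $(-1)^m\lambda^{\alpha n}\sum_{k=-\infty}^{-1}(\alpha k+\tilde c)^m\binom{\alpha n}{\alpha k}\lambda^{-\alpha k}$ with $\tilde c:=-(c+\alpha n)$, and applying \eqref{Eq:Fractional-Taylor-series-2} to $h_{\tilde c}^{(m)}$ converts it into the fourth term of \eqref{Eq:Moment-fractional-binomial-distribution}.

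To finish, multiplying by $(1-x)^{\alpha n}$ restores the $(1-x)^{\alpha(n-j)}$ factor on the left; on the right, Lemma~\ref{Lem:Representation-h_c^m} expands $(1-x)^{\alpha n}\sum_{\omega\in K_\alpha} h_c^{(m)}(\lambda\omega)$ into the first two terms of \eqref{Eq:Moment-fractional-binomial-distribution} (the $\omega=1$ piece collapsing to $\sum_k W_c(m,k)(\alpha n)_k x^k$), while the integral prefactors follow from $\lambda^{\alpha}=x^{\alpha}(1-x)^{-\alpha}$ together with the identities $|t^\alpha-\lambda^\alpha e^{-i\alpha\pi}|^2=(1-x)^{-2\alpha}|(t(1-x))^\alpha-x^\alpha e^{-i\alpha\pi}|^2$ and $|e^{-i\alpha\pi}-(\lambda t)^\alpha|^2=(1-x)^{-2\alpha}|(1-x)^\alpha e^{-i\alpha\pi}-(tx)^\alpha|^2$. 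The case $\alpha\in\N$ is immediate since $\sin\alpha\pi=0$ kills both integrals. The hardest step, I expect, is the tail reindexing: keeping track of the simultaneous shift $c\mapsto-(c+\alpha n)$, the sign $(-1)^m$, and the $\lambda^{\alpha n}$ factor so that the integrand emerging from \eqref{Eq:Fractional-Taylor-series-2} applied to $h_{\tilde c}^{(m)}$ matches the fourth term exactly.
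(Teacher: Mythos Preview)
Your proposal is correct and follows the same overall architecture as the paper: compute $(h_c^{(m)})^{\#}(\xi)=(\xi+c)^m\binom{\alpha n}{\xi}$, feed $h_c^{(m)}$ into Proposition~\ref{Prop:Fractional-Taylor-series-HH}, peel off the tail $j\ge n+1$ via the substitution $j\mapsto n-l$ and the parameter shift $c\mapsto -(c+\alpha n)$, then multiply by $(1-x)^{\alpha n}$ and expand the $K_\alpha$-sum using Lemma~\ref{Lem:Representation-h_c^m}.

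Two steps differ from the paper's argument. First, for $(h_c^{(m)})^{\#}$ the paper never uses the explicit form of $h_c^{(m)}$: it proves once, by integration by parts, that $(\mathscr{D}_c f)^{\#}(\xi)=(\xi+c)f^{\#}(\xi)$ whenever $f(-1)=0$, and then iterates; the condition $m\le\alpha n$ (or $\alpha n\in\N$) enters precisely to guarantee $h_c^{(k)}(-1)=0$ for $k<m$. Your route via the shift relation and the combinatorial identity $\sum_k W_c(m,k)(\xi)_k=(\xi+c)^m$ is equally valid and arguably more self-contained, at the cost of a short induction. Second, to pass from $\lambda\in(0,1]$ (i.e.\ $x\le 1/2$) to all of $(0,1)$, the paper argues by analytic continuation in $x$, observing that both sides of \eqref{Eq:Sum-Taylor-series-x} extend holomorphically across $x=1/2$. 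Your symmetry argument $(x,c)\leftrightarrow(1-x,-(c+\alpha n))$ also works, but be aware that you must then check that the \emph{right-hand side} of \eqref{Eq:Moment-fractional-binomial-distribution} respects this symmetry; the integral terms swap transparently, but the $K_\alpha$-sums require a short computation (essentially the identity $(1-x)^{\alpha n}h_c^{(m)}\!\bigl(\tfrac{x\omega}{1-x}\bigr)=(-1)^m x^{\alpha n}h_{-(c+\alpha n)}^{(m)}\!\bigl(\tfrac{(1-x)\omega^{-1}}{x}\bigr)$ together with $\omega\in K_\alpha\Leftrightarrow\omega^{-1}\in K_\alpha$). The paper's analytic continuation sidesteps this verification.
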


\begin{proof}
    The proof is split into two parts. 

    \smallskip
    \noindent
    {\bf Step 1.}
    Let $f$ be a continuous function on $\ol{D}$ such that $f$ is holomorphic on $D$ and $f(-1)=0$. 
    Suppose that a function $g$ defined by 
        \[
        g(z):=\mathscr{D}_cf(z)=zf'(z)+cf(z), \qquad z \in D
        \] 
    is extended as a continuous function on $\ol{D}$ and 
    the function $(-1/2, 1/2) \ni x \longmapsto f(e^{2\pi i x})$ is of class $C^1$. 
    Then, we have 
    \begin{align}
        g^\#(\xi)
        &= \int_{-1/2}^{1/2} e^{2\pi i x}f'(e^{2\pi i x})
        e^{-2\pi i x \xi} \, \dd x + cf^\#(\xi) \nn \\
        &= \lim_{\ve \searrow 0}\frac{1}{2\pi i}\int_{-1/2+\ve}^{1/2-\ve} 
        \left(\frac{\dd}{\dd x}f(e^{2\pi i x})\right)
        e^{-2\pi i x \xi} \, \dd x+ cf^\#(\xi) \nn \\
        &=\lim_{\ve \searrow 0}\left(\left[\frac{1}{2\pi i}f(e^{2\pi i x})e^{-2\pi i x \xi}\right]_{x=-1/2+\ve}^{x=1/2-\ve}
        +\int_{-1/2+\ve}^{1/2-\ve}f(e^{2\pi i x})\xi e^{-2\pi i x \xi} \, \dd x\right)
        + cf^\#(\xi) \nn \\
        &=(\xi+c)f^\#(\xi), \qquad \xi \in \R. 
        \label{Eq:g-sharp}
    \end{align}
    In the last equality, we used $f(-1)=0$.
    Because $m \le \alpha n$ or $\alpha n \in \N$ holds, we can take the function   
    $h_c^{(k)}$, $k=0, 1, \dots, m-1$ defined by \eqref{Eq:definition-function-h_c^m} as $f$. 
    Therefore, we obtain
    \begin{align}\label{Eq:h_c^m-sharp}
        (h_c^{(m)})^\#(\xi)
        =(\xi+c)(h_c^{(m-1)})^\#(\xi) 
        =\cdots 
        =(\xi+c)^m(h_c^{(0)})^\#(\xi)
        =(\xi+c)^m \binom{\alpha n}{\xi}
    \end{align}
    for $\xi \in \R$, from \eqref{Eq:g-sharp} and Proposition~\ref{Prop:Property-f-sharp}.

    \smallskip
    \noindent{\bf Step 2.} 
    We apply Proposition~\ref{Prop:Fractional-Taylor-series-HH} 
    to the function $f(z)=h_c^{(m)}(z)$. 
    Then, it follows from \eqref{Eq:h_c^m-sharp} that
    \begin{align}
        \alpha \sum_{j=0}^\infty (\alpha j+c)^m \binom{\alpha n}{\alpha j}\lambda^{\alpha j}
        &= \sum_{\omega \in K_\alpha}h_c^{(m)}(\lambda \omega)-\frac{\alpha \lambda^\alpha \sin \alpha \pi}{\pi} 
        \int_0^1 \frac{t^{\alpha-1}h_c^{(m)}(-t)}{|t^\alpha - \lambda^\alpha e^{-i \alpha \pi}|^2} \, \dd t, 
        \label{Eq:Taylor-series-h_c^m-1} \\
        \alpha \sum_{j=-\infty}^{-1} (\alpha j+c)^m \binom{\alpha n}{\alpha j}\lambda^{-\alpha j}
        &= \frac{\alpha \lambda^\alpha \sin \alpha \pi}{\pi}
        \int_0^1 \frac{t^{\alpha-1}h_c^{(m)}(-t)}{|e^{-i \alpha \pi}-(\lambda t)^\alpha|^2} \, \dd t.
        \label{Eq:Taylor-series-h_c^m-2}
    \end{align}
    Here, if $\alpha \in \N$, then the second term of the right-hand side of 
    \eqref{Eq:Taylor-series-h_c^m-1} and the right-hand side of \eqref{Eq:Taylor-series-h_c^m-2} 
    are regarded as 0. Because it holds that  
    \begin{align*}
      \alpha \sum_{j=n+1}^\infty (\alpha j+c)^m \binom{\alpha n}{\alpha j}\lambda^{\alpha j} 
      &= \alpha \sum_{l=-\infty}^{-1} \{\alpha (n-l)+c\}^m 
      \binom{\alpha n}{\alpha (n-l)}\lambda^{\alpha (n-l)}\\
      &=(-1)^m\lambda^{\alpha n}  \alpha \sum_{l=-\infty}^{-1}
      \{\alpha l-(c+\alpha n)\}^m \binom{\alpha n}{\alpha l}\lambda^{-\alpha l} \\
      &=(-1)^m \lambda^{\alpha n}  
      \frac{\alpha \lambda^\alpha \sin \alpha \pi}{\pi}
      \int_0^1 \frac{t^{\alpha-1}h_{-(c+\alpha n)}^{(m)}(-t)}{|e^{-i \alpha \pi}-(\lambda t)^\alpha|^2} \, \dd t
    \end{align*}
    by using \eqref{Eq:Taylor-series-h_c^m-2} with $c$ replaced by $-(c+\alpha n)$, 
    we apply it to \eqref{Eq:Taylor-series-h_c^m-1} to obtain
    \begin{align}
      \alpha \sum_{j=0}^n (\alpha j+c)^m \binom{\alpha n}{\alpha j}\lambda^{\alpha j} 
      &= \alpha \sum_{j=0}^\infty (\alpha j+c)^m \binom{\alpha n}{\alpha j}
      \lambda^{\alpha j}
        -\alpha \sum_{j=n+1}^\infty (\alpha j+c)^m \binom{\alpha n}{\alpha j}\lambda^{\alpha j} \nn \\
      &=\sum_{\omega \in K_\alpha}h_c^{(m)}(\lambda\omega)-\frac{\alpha \lambda^\alpha \sin \alpha \pi}{\pi} 
        \int_0^1 \frac{t^{\alpha-1}h_c^{(m)}(-t)}{|t^\alpha - \lambda^\alpha e^{-i \alpha \pi}|^2} \, \dd t \nn \\
      &\quad-(-1)^m\frac{\alpha \lambda^{\alpha(n+1)} \sin \alpha \pi}{\pi}
        \int_0^1 \frac{t^{\alpha-1}h_{-(c+\alpha n)}^{(m)}(-t)}{|e^{-i \alpha \pi}
        -(\lambda t)^\alpha|^2} \, \dd t.
        \label{Eq:Sum-Taylor-series}
    \end{align}
    We put $\lambda=x/(1-x)$, $0<x<1/2$ 
    and multiply both sides in \eqref{Eq:Sum-Taylor-series} by $(1-x)^{\alpha n}$. 
    Then, we obtain
    \begin{align}
      &\alpha \sum_{j=0}^n (\alpha j+c)^m \binom{\alpha n}{\alpha j}x^{\alpha j}(1-x)^{\alpha(n-j)} \nn \\
      &= (1-x)^{\alpha n}\sum_{\omega \in K_\alpha}
      h_c^{(m)}\left(\frac{x\omega}{1-x}\right) \nn \\
      &\quad-\frac{\alpha x^\alpha
      (1-x)^{\alpha(n+1)}\sin \alpha \pi}{\pi} 
      \int_0^1 \frac{t^{\alpha-1}h_c^{(m)}(-t)}{|(t(1-x))^\alpha - x^\alpha e^{-i \alpha \pi}|^2} \, \dd t \nn \\
      &\quad-(-1)^m \frac{\alpha x^{\alpha(n+1)}(1-x)^\alpha \sin \alpha \pi}{\pi}
      \int_0^1 \frac{t^{\alpha-1}h_{-(c+\alpha n)}^{(m)}(-t)}{|(1-x)^{\alpha}e^{-i \alpha \pi}-(tx)^\alpha|^2} \, \dd t.
      \label{Eq:Sum-Taylor-series-x}
    \end{align}
Because 
    \[
    |(t(1-x))^\alpha - x^\alpha e^{-i\alpha \pi}|^2
    =\{(t(1-x))^\alpha - x^\alpha e^{-i\alpha \pi}\}
    \{(t(1-x))^\alpha - x^\alpha e^{i\alpha \pi}\}
    \]
and 
    \[
    |(1-x)^\alpha e^{-i\alpha \pi} - (tx)^\alpha|^2
    =\{(1-x)^\alpha e^{-i\alpha \pi} - (tx)^\alpha\}
    \{(1-x)^\alpha e^{i\alpha \pi} - (tx)^\alpha\}
    \]
are extended to holomorphic functions in $x$ on a complex domain containing 
the set $\{x \in \R \mid 0<x<1\}$ and do not vanish if $\alpha \notin \N$,
\eqref{Eq:Sum-Taylor-series-x} also holds for $0<x<1$ by the identity theorem. 
Moreover, by applying \eqref{Eq:Representation-h_c^m} with $z=x\omega/(1-x)$ and 
$\omega \in K_\alpha$ to the first term of the right-hand side of \eqref{Eq:Sum-Taylor-series-x}, 
we have 
    \begin{align*}
    &(1-x)^{\alpha n}\sum_{\omega \in K_\alpha} h_c^{(m)}\left(\frac{x\omega}{1-x}\right) \\
    &=\sum_{k=0}^m W_c(m, k) (\alpha n)_k \left\{\sum_{\omega \in K_\alpha}(x\omega)^k 
    (1-x+x\omega)^{\alpha n-k}\right\}\\
    &=\sum_{k=0}^m W_c(m, k) (\alpha n)_kx^k+\sum_{k=0}^m W_c(m, k) (\alpha n)_k 
    \left\{\sum_{\omega \in K_\alpha \setminus \{1\}}(x\omega)^k (1-x+x\omega)^{\alpha n-k}\right\}.
    \end{align*}
We remark that if $\omega=-1 \in K_\alpha$, then $\alpha/2 \in \N$ so that 
$h_c^{(m)}(x\omega/(1-x))$ is well-defined.
Thus, \eqref{Eq:Sum-Taylor-series-x} implies \eqref{Eq:Moment-fractional-binomial-distribution}. 
\end{proof}

\begin{pr}\label{Prop:Order-moment-fractional-binomial}
\begin{enumerate}
\item As $n \to \infty$,
\begin{equation}\label{Eq:Order-normalized-constant}
    Z_{\alpha, x}^{(n)}=1+O(e^{-\delta n}),
    \qquad \frac{1}{Z_{\alpha, x}^{(n)}}=1+O(e^{-\delta n}).
\end{equation}
\item For each $m \in \N_0$,  
    \begin{equation}\label{Eq:Order-moment-fractional-binomial}
        \alpha \sum_{j=0}^n (\alpha j+c)^m 
        \binom{\alpha n}{\alpha j}x^{\alpha j}(1-x)^{\alpha(n-j)}
        =\sum_{k=0}^m W_c(m, k)(\alpha n)_kx^k + (|c|^m+1)O(e^{-\delta n})
    \end{equation}
    as $n \to \infty$.
\end{enumerate}
Here, the symbol $O(e^{-\delta n})$ means that there exist positive constants $C$ and $\delta$ depending only on $x$, $\alpha$ (and $m$) that are chosen uniformly on any compact subset of $x \in (0, 1)$ and any compact subset of $\alpha \in (0, \infty)$ such that the modulus of the corresponding term is dominated by $C e^{-\delta n}$. 
\end{pr}
In what follows, we use the notation $O(e^{-\delta n})$ in this sense; $\delta$ may change from line to line. 

In order to prove Proposition~\ref{Prop:Order-moment-fractional-binomial}, 
we need to give the following estimates of the third term 
on the right-hand side of \eqref{Eq:definition-normalized-constant}.

\begin{lm}\label{Lem:Estimates-integral-terms}
    Let $\alpha>0$, $n \in \N$, and $x \in (0, 1)$. Then, we have 
    \begin{align}
        \Bigg|\frac{\alpha x^\alpha (1-x)^{\alpha(n+1)} \sin \alpha \pi}{\pi}
        \int_0^1 
        \frac{t^{\alpha-1}(1-t)^{\alpha n}}{|(t(1-x))^\alpha-x^\alpha e^{-i\alpha \pi}|^2}
        \, \dd t\Bigg| 
        &\le (1-x)^{\alpha n}\psi(\alpha)
        \label{Eq:Integral-estimate-1}
        \intertext{and}
        \Bigg|\frac{\alpha x^{\alpha(n+1)} (1-x)^\alpha \sin \alpha \pi}{\pi}
        \int_0^1 
        \frac{t^{\alpha-1}(1-t)^{\alpha n}}{|(1-x)^\alpha e^{-i\alpha \pi}-(tx)^\alpha|^2}
        \, \dd t\Bigg| 
        &\le x^{\alpha n}\psi(\alpha), 
        \label{Eq:Integral-estimate-2}
    \end{align}
    where $\psi(\alpha)$ is the periodic function of period $2$ such that
    $\psi(\alpha)=1-|\alpha|$ for $|\alpha| \le 1$. 
\end{lm}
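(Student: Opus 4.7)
The plan is to prove \eqref{Eq:Integral-estimate-1} only; the bound \eqref{Eq:Integral-estimate-2} then follows immediately by the involution $x\leftrightarrow 1-x$, which exchanges the two left-hand sides and swaps $(1-x)^{\alpha n}$ with $x^{\alpha n}$. The case $\alpha/2\in\N$ is trivial, because the integrals are defined to be $0$ under the convention following Proposition~\ref{Prop:generalization-binomial-theorem-HH}, whereas the right-hand sides are non-negative. So I will assume throughout that $\alpha/2\notin\N$.

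After pulling $(1-x)^{\alpha n}$ out of $(1-x)^{\alpha(n+1)}$, the claim reduces to
\[
    \left|\frac{\alpha x^\alpha(1-x)^\alpha\sin\alpha\pi}{\pi}\int_0^1\frac{t^{\alpha-1}(1-t)^{\alpha n}}{|(t(1-x))^\alpha-x^\alpha e^{-i\alpha\pi}|^2}\,\dd t\right|\le\psi(\alpha).
\]
I will then perform the change of variable $u=(t(1-x)/x)^\alpha$, under which $|(t(1-x))^\alpha-x^\alpha e^{-i\alpha\pi}|^2 = x^{2\alpha}(u^2-2u\cos\alpha\pi+1)$, $\alpha t^{\alpha-1}\,\dd t = (x/(1-x))^\alpha\,\dd u$, and $u$ ranges over $[0,((1-x)/x)^\alpha]$. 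After the prefactors cancel completely, the quantity under the absolute value becomes
\[
    \frac{\sin\alpha\pi}{\pi}\int_0^{((1-x)/x)^\alpha}\frac{(1-(x/(1-x))u^{1/\alpha})^{\alpha n}}{u^2-2u\cos\alpha\pi+1}\,\dd u.
\]
Since $(x/(1-x))u^{1/\alpha}=t\in[0,1]$ on the integration interval, the numerator lies in $[0,1]$; taking absolute values and enlarging the domain of integration to $[0,\infty)$ yields the upper bound $\frac{|\sin\alpha\pi|}{\pi}\int_0^\infty\frac{\dd u}{u^2-2u\cos\alpha\pi+1}$.

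The final step is to evaluate this constant and identify it with $\psi(\alpha)$. Completing the square gives the antiderivative $|\sin\alpha\pi|^{-1}\arctan((u-\cos\alpha\pi)/|\sin\alpha\pi|)$, leading to the closed form $|\sin\alpha\pi|^{-1}\bigl(\pi/2+\arctan(\cos\alpha\pi/|\sin\alpha\pi|)\bigr)$. Both $\cos\alpha\pi$ and $|\sin\alpha\pi|$, as well as $\psi(\alpha)$, are $2$-periodic in $\alpha$, so it suffices to verify the identity on $(0,2)\setminus\{1\}$: on $(0,1)$ the integral evaluates to $\pi(1-\alpha)/|\sin\alpha\pi|$, and on $(1,2)$ to $\pi(\alpha-1)/|\sin\alpha\pi|$, which after multiplication by $|\sin\alpha\pi|/\pi$ gives the triangle-wave values $1-\alpha$ and $\alpha-1$, respectively. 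The main obstacle is the bookkeeping in this last step — one must manage the sign of $\sin\alpha\pi$ and pick the correct branch of arctangent across the two sub-intervals — but the two cases assemble precisely into $\psi(\alpha)$, completing the bound.
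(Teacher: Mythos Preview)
Your proof is correct and follows essentially the same route as the paper's: reduce to \eqref{Eq:Integral-estimate-1} by the $x\leftrightarrow 1-x$ symmetry, bound $(1-t)^{\alpha n}\le 1$, perform the substitution $u=((1-x)t/x)^\alpha$, extend the integral to $[0,\infty)$, and evaluate the resulting constant as $\psi(\alpha)$. The only cosmetic differences are that the paper bounds $(1-t)^{\alpha n}$ before changing variables and cites a table of integrals for the final evaluation, whereas you carry the factor through the substitution and compute the arctangent integral by hand; also, the paper disposes of all integer $\alpha$ at once via $\sin\alpha\pi=0$, while you invoke the convention only for $\alpha/2\in\N$ (the odd-integer case is still covered by your argument since the prefactor $|\sin\alpha\pi|$ vanishes there).
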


\begin{proof}
    Because the left-hand sides of \eqref{Eq:Integral-estimate-1} and 
    \eqref{Eq:Integral-estimate-2} are regarded as $0$ if $\alpha \in \N$, 
    we may suppose that $\alpha$ is not an integer. 
    It is sufficient to show \eqref{Eq:Integral-estimate-1}
    because \eqref{Eq:Integral-estimate-2} is immediately obtained 
    by replacing $x$ in \eqref{Eq:Integral-estimate-1} with $1-x$. 
    We have
    \begin{align}
        &\Bigg|\frac{\alpha x^\alpha (1-x)^{\alpha(n+1)} \sin \alpha \pi}{\pi}
        \int_0^1 
        \frac{t^{\alpha-1}(1-t)^{\alpha n}}{|(t(1-x))^\alpha-x^\alpha e^{-i\alpha \pi}|^2}
        \, \dd t\Bigg|  \nn \\
        &\le \frac{\alpha x^\alpha (1-x)^{\alpha(n+1)}}{\pi}|\sin \alpha \pi|
        \int_0^1 \frac{t^{\alpha-1}}{|(t(1-x))^\alpha-x^\alpha e^{-i\alpha \pi}|^2} \, \dd t.
        \label{Eq:integral-term-bound-1}
    \end{align} 
    By the change of variable $s=((1-x)t/x)^\alpha$
    and \cite[Equation~12, page 329]{GR}, 
    the right-hand side of \eqref{Eq:integral-term-bound-1} becomes
    \begin{align*}
        &\frac{(1-x)^{\alpha n}}{\pi}|\sin \alpha \pi|
        \int_0^{(\frac{1-x}{x})^\alpha} \frac{\dd s}{|s-e^{-i\alpha \pi}|^2} \nn \\
        &\le \frac{(1-x)^{\alpha n}}{\pi}|\sin \alpha \pi|
        \int_0^\infty \frac{\dd s}{(s-\cos \alpha \pi)^2+\sin^2\alpha \pi} \nn \\
        &=\frac{(1-x)^{\alpha n}}{\pi}|\sin \alpha \pi|
        \times \frac{\pi \psi(\alpha)}{|\sin \alpha \pi|}
        =(1-x)^{\alpha n}\psi(\alpha),
    \end{align*}
which is the desired estimate. 
\end{proof}

We are now in a position to prove Proposition~\ref{Prop:Order-moment-fractional-binomial}.

\begin{proof}[Proof of Proposition~{\rm \ref{Prop:Order-moment-fractional-binomial}}]
For $\omega \in K_\alpha \setminus \{1\}$, that is, 
$\omega = e^{i\theta}$ for some $\theta \in (-\pi, \pi] \setminus \{0\}$, 
we observe that 
    \begin{align}
        |1-x+x\omega|^2 &= (1-x)^2+x^2+2x(1-x)\mathrm{Re}(\omega) \nn \\
        &=1-2x(1-x)(1-\mathrm{Re}(\omega))<1, 
        \label{Eq:Estimate-1-x+xomega}
    \end{align}
which implies that for each $k \in \N_0$, 
    \begin{equation}\label{Eq:Order-1-x+xomega}
        \sum_{\omega \in K_\alpha \setminus\{1\}}
        (x\omega)^k(1-x+x\omega)^{\alpha n-k}=O(e^{-\delta n})
    \end{equation}
as $n \to \infty$ for some $\delta>0$. 
\eqref{Eq:Order-normalized-constant} follows from \eqref{Eq:definition-normalized-constant}, \eqref{Eq:Order-1-x+xomega} and Lemma~\ref{Lem:Estimates-integral-terms}.
Furthermore, Lemma~\ref{Lem:Representation-h_c^m} implies that 
    \begin{equation}\label{Eq:Estimate-W_c}
        |W_c(m, k)| \le C_{2, 1}(|c|^{m-k}+1), \qquad m \in \N_0, \,\, k=0, 1, \dots, m
    \end{equation}
for some $C_{2, 1}>0$ depending only on $m \in \N_0$.
Then, for large $n$ such that $\alpha n \ge m$, we have
    \begin{align}
        |h_c^{(m)}(-t)| 
        &\le \sum_{k=0}^m C_{2,2}(|c|^{m-k}+1)n^k,
        \label{Eq:Estimate-h_c^m-1} \\
        |h_{-(c+\alpha n)}^{(m)}(-t)| 
        &\le \sum_{k=0}^m C_{2, 3}(|c+\alpha n|^{m-k}+1)n^k
        \label{Eq:Estimate-h_c^m-2}
    \end{align}
for $t \in (0, 1)$. 
Hence, the second term on the right-hand side of \eqref{Eq:Moment-fractional-binomial-distribution} is estimated as 
    \begin{align}
        &\left|\sum_{k=0}^m W_c(m, k)(\alpha n)_k\left\{\sum_{\omega \in K_\alpha \setminus \{1\}}(x\omega)^k(1-x+x\omega)^{\alpha n-k}\right\}\right| \nn \\
        &\le \sum_{k=0}^m C_{2, 4}(|c|^{m-k}+1)n^k \left|
        \sum_{\omega \in K_\alpha \setminus \{1\}}(x\omega)^k(1-x+x\omega)^{\alpha n-k}\right|
        =(|c|^m+1)O(e^{-\delta n}) \nn
    \end{align}
as $n \to \infty$ for some $\delta>0$, 
by using \eqref{Eq:Order-1-x+xomega} and \eqref{Eq:Estimate-W_c}. 
Moreover, \eqref{Eq:Estimate-h_c^m-1}, \eqref{Eq:Estimate-h_c^m-2} 
and Lemma~\ref{Lem:Estimates-integral-terms}
imply that the modulus of the sum of the third and fourth terms on the right-hand side of 
\eqref{Eq:Moment-fractional-binomial-distribution} is dominated by
    \begin{align}
        &(1-x)^{\alpha n}\psi(\alpha)\sum_{k=0}^m C_{2,2}(|c|^{m-k}+1)n^k
        +x^{\alpha n}\psi(\alpha)\sum_{k=0}^m C_{2,3}(|c+\alpha n|^{m-k}+1)n^k, \nn
    \end{align}
which is $(|c|^m+1)O(e^{-\delta n})$ as well. 
Thus, \eqref{Eq:Order-moment-fractional-binomial} holds.
Note that these items  are regarded as 0 if $\alpha \in \N$.
\end{proof}

We often use Proposition~\ref{Prop:Order-moment-fractional-binomial}~(2) with 
$c=-\alpha nx$ in the rest of this paper. 
When $c=-\alpha nx$, the second term
on the right-hand side of \eqref{Eq:Order-moment-fractional-binomial} is 
$O(e^{-\delta n})$ with possibly different $\delta>0$. 
A direct calculation also gives
    \begin{equation}\label{Eq:Examples-leading-terms}
        \sum_{k=0}^m W_{-\alpha nx}(m, k)(\alpha n)_kx^k
        =\begin{cases}
        0 & \text{if }m=1, \\
        \alpha n x(1-x) & \text{if }m=2, \\
        \alpha n x(1-x)(1-2x) & \text{if }m=3, \\
        \alpha n x(1-x)\{3(\alpha n-2)x(1-x)+1\} & \text{if }m=4.
        \end{cases} 
    \end{equation}

The mean and variance of $\mu_{\alpha, x}^{(n)}$ are obtained 
as follows.

\begin{co}\label{Cor:Mean-variance-fractional-binomial}
    Let $S_{\alpha, x}^{(n)}$ be a random variable whose probability law is $\mu_{\alpha, x}^{(n)}$. 
    Then, for some $\delta>0$, we have 
    \begin{align}
        \E[S_{\alpha, x}^{(n)}] 
        = \sum_{j=0}^n j \mu_{\alpha, x}^{(n)}(\{j\}) 
        = nx+O(e^{-\delta n})
        \label{Eq:Mean-fractional-binomial}
    \end{align}
    and 
    \begin{align*}
        \mathrm{Var}(S_{\alpha, x}^{(n)})
        =\sum_{j=0}^n \left(j-\E[S_{\alpha, x}^{(n)}]\right)^2\mu_{\alpha, x}^{(n)}(\{j\})
        =\frac{1}{\alpha}nx(1-x)+O(e^{-\delta n})
    \end{align*}
    as $n \to \infty$. 
\end{co}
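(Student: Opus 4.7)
The plan is to apply Proposition~\ref{Prop:Order-moment-fractional-binomial} twice with carefully chosen values of $(m,c)$, then convert the resulting centered moments into the mean and variance by dividing by $Z_{\alpha,x}^{(n)}$ (using \eqref{Eq:Order-normalized-constant}).

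For the mean I would take $m=1$ and $c=0$. From Table~\ref{Table:Whitney} we have $W_0(1,0)=0$ and $W_0(1,1)=1$, so $\sum_{k=0}^{1}W_0(1,k)(\alpha n)_k x^k = \alpha n x$. Therefore \eqref{Eq:Order-moment-fractional-binomial} combined with the definition of $\mu_{\alpha,x}^{(n)}$ yields
\begin{equation*}
    \alpha\,\E[S_{\alpha,x}^{(n)}]
    = \sum_{j=0}^{n}\alpha j\,\mu_{\alpha,x}^{(n)}(\{j\})
    = \frac{1}{Z_{\alpha,x}^{(n)}}\bigl(\alpha n x + O(e^{-\delta n})\bigr),
\end{equation*}
and then $1/Z_{\alpha,x}^{(n)}=1+O(e^{-\delta n})$ delivers \eqref{Eq:Mean-fractional-binomial} after division by $\alpha$.

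For the variance the right choice is $m=2$ and $c=-\alpha n x$, because \eqref{Eq:Examples-leading-terms} gives $\sum_{k=0}^{2}W_{-\alpha n x}(2,k)(\alpha n)_k x^k = \alpha n x(1-x)$. Here $|c|^{m}=(\alpha n x)^{2}$ is polynomial in $n$, so the remainder in \eqref{Eq:Order-moment-fractional-binomial} is still $O(e^{-\delta' n})$ for any $0<\delta'<\delta$. Dividing by $Z_{\alpha,x}^{(n)}$ and by $\alpha^{2}$, one obtains
\begin{equation*}
    \sum_{j=0}^{n}(j-nx)^{2}\,\mu_{\alpha,x}^{(n)}(\{j\})
    = \frac{1}{\alpha}nx(1-x)+O(e^{-\delta' n}).
\end{equation*}

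To conclude, I would convert this moment centered at $nx$ into the true variance (centered at $\E[S_{\alpha,x}^{(n)}]$) by the standard identity
\begin{equation*}
    \mathrm{Var}(S_{\alpha,x}^{(n)})
    = \sum_{j=0}^{n}(j-nx)^{2}\,\mu_{\alpha,x}^{(n)}(\{j\})
      -\bigl(\E[S_{\alpha,x}^{(n)}]-nx\bigr)^{2},
\end{equation*}
where the linear cross term vanishes by the definition of expectation. The subtracted square is $O(e^{-2\delta n})$ by the mean estimate, hence negligible, which yields the stated formula. The only mild subtlety—and the main bookkeeping point—is tracking that the polynomial-in-$n$ factor $(|c|^{m}+1)$ appearing in \eqref{Eq:Order-moment-fractional-binomial} when $c=-\alpha n x$ can be absorbed into the exponential remainder at the cost of shrinking $\delta$; no genuine obstacle arises.
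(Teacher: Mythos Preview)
Your proposal is correct and follows essentially the same route as the paper's proof: apply Proposition~\ref{Prop:Order-moment-fractional-binomial} with $(m,c)=(1,0)$ for the mean and $(m,c)=(2,-\alpha n x)$ for the variance, divide by $Z_{\alpha,x}^{(n)}$ via \eqref{Eq:Order-normalized-constant}, and then shift the centering from $nx$ to $\E[S_{\alpha,x}^{(n)}]$ using the identity you wrote. The paper records the bookkeeping point about absorbing the polynomial factor $(|c|^m+1)$ into the exponential remainder just before stating the corollary rather than inside the proof, but the content is identical.
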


\begin{proof}
    Applying Proposition \ref{Prop:Order-moment-fractional-binomial}~(2) with $c=0$ and $m=1$, we have 
    \begin{align*}
        \alpha \sum_{j=0}^n (\alpha j) \binom{\alpha n}{\alpha j}x^{\alpha j}
        (1-x)^{\alpha(n-j)}&=W_0(1, 0)+W_0(1, 1)\alpha nx+O(e^{-\delta n}) \\
        &=\alpha nx+O(e^{-\delta n})
    \end{align*}
    as $n \to \infty$. This and Proposition~\ref{Prop:Order-moment-fractional-binomial}~(1) imply \eqref{Eq:Mean-fractional-binomial}.
    We also apply \eqref{Eq:Examples-leading-terms} and Proposition~\ref{Prop:Order-moment-fractional-binomial}~(2)
    with $c=-\alpha nx$ and $m=2$ to obtain
    \[
    \sum_{j=0}^n (j-nx)^2 \mu_{\alpha, x}^{(n)}(\{j\})
    =\frac{1}{Z_{\alpha, x}^{(n)}}\left\{\frac{1}{\alpha}nx(1-x)+O(e^{-\delta n})\right\}=\frac{1}{\alpha}nx(1-x)+O(e^{-\delta n})
    \]
    as $n \to \infty$. Hence, \eqref{Eq:Mean-fractional-binomial} leads to
    \begin{align*}
        \mathrm{Var}(S_{\alpha, x}^{(n)})
        &=\sum_{j=0}^n (j-nx)^2 \mu_{\alpha, x}^{(n)}(\{j\}) 
        - \left(nx-\E[S_{\alpha, x}^{(n)}]\right)^2 \\
        &=\frac{1}{\alpha}nx(1-x)+O(e^{-\delta n})
    \end{align*}
    as $n \to \infty$. 
\end{proof}

\subsection{Uniform bound of $Z_{\alpha, x}^{(n)}$}
\label{Subsect:uniform bound}

Let $\alpha>0$, $n \in \N$, and $x \in [0, 1]$. 
We establish a uniform bound of the normalized constant $Z_{\alpha, x}^{(n)}$
with respect to $x$ and $n$, and again we note that $Z_{1, x}^{(n)}=1$. 

\begin{tm}[Uniform bound of $Z_{\alpha, x}^{(n)}$]
\label{Thm:Uniform-bound-normalized-constant}
    {\rm (1)} 
    Let $\alpha \in (0, 2)$. Then, we have 
    \begin{equation}\label{Eq:Uniform-bound-normalized-constant-1}
        \alpha \wedge 1 \le Z_{\alpha, x}^{(n)} \le \alpha \vee 1        
    \end{equation}
    for $n \in \N$ and $x \in [0, 1]$. 
    
    \smallskip
    \noindent
    {\rm (2)}
    Let $\alpha \in [2, \infty)$. Then, we have 
    \begin{equation}\label{Eq:Uniform-bound-normalized-constant-2}
        \alpha^{1-\alpha^2}
        \le Z_{\alpha, x}^{(n)} \le \alpha      
    \end{equation}
    for $n \in \N$ and $x \in [0, 1]$.
\end{tm}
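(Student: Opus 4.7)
The plan is to exploit the exact representation \eqref{Eq:definition-normalized-constant} of $Z_{\alpha,x}^{(n)}$ together with the bounds from Lemma \ref{Lem:Estimates-integral-terms}, treating the two ranges of $\alpha$ separately. For Part (1), note that $K_\alpha=\{1\}$ when $\alpha\in(0,2)$, so the sum over $K_\alpha\setminus\{1\}$ in \eqref{Eq:definition-normalized-constant} is empty and $Z_{\alpha,x}^{(n)}=1-J_{\alpha,x}^{(n)}$, where $J_{\alpha,x}^{(n)}$ is the single integral term. Its prefactor $\alpha x^\alpha(1-x)^\alpha\sin\alpha\pi/\pi$ multiplies a manifestly positive integrand, so $J$ inherits the sign of $\sin\alpha\pi$; this immediately gives $Z\le 1$ on $(0,1]$ and $Z\ge 1$ on $[1,2)$. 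The opposite halves follow from Lemma \ref{Lem:Estimates-integral-terms}, which bounds $|J|$ by $\psi(\alpha)[(1-x)^{\alpha n}+x^{\alpha n}]$: when $\alpha n\ge 1$, convexity of $t\mapsto t^{\alpha n}$ gives $(1-x)^{\alpha n}+x^{\alpha n}\le 1$, and since $\psi(\alpha)=|\alpha-1|$ on $[0,2]$, one obtains $|J|\le|\alpha-1|$ and hence $Z_{\alpha,x}^{(n)}\in[\alpha\wedge 1,\alpha\vee 1]$. The only remaining case is $n=1$ with $\alpha\in(0,1)$, disposed of directly via $Z_{\alpha,x}^{(1)}=\alpha[x^\alpha+(1-x)^\alpha]\ge\alpha$, using the concavity inequality $x^\alpha+(1-x)^\alpha\ge 1$.

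For Part (2), the upper bound $Z\le\alpha$ is immediate when $\alpha$ is an integer, by the Fourier identity on $\Z/\alpha\Z$: $Z_{\alpha,x}^{(n)}$ equals $\alpha$ times the probability that a $\mathrm{Binomial}(\alpha n,x)$ variable lies in $\alpha\Z$. For non-integer $\alpha\ge 2$ I start from \eqref{Eq:definition-normalized-constant}, use $|1-x+x\omega|\le 1$ to obtain $\sum_{\omega\ne 1}\mathrm{Re}((1-x+x\omega)^{\alpha n})\le|K_\alpha|-1$, and combine with Lemma \ref{Lem:Estimates-integral-terms} and the arithmetic identity $|K_\alpha|=\alpha+\mathrm{sgn}(\sin\alpha\pi)\,\psi(\alpha)$, which follows from the counting description $K_\alpha=\{e^{2\pi ik/\alpha}:k\in\Z\cap(-\alpha/2,\alpha/2]\}$. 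When $\sin\alpha\pi\le 0$ the integral contribution $-I_1-I_2$ has favorable sign and is bounded by $\psi(\alpha)$, so $Z\le|K_\alpha|+\psi(\alpha)=\alpha$ emerges from these estimates.

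For the lower bound $Z\ge\alpha^{1-\alpha^2}$ on $\alpha\ge 2$, I split on $n$. Retaining only the $j=0$ and $j=n$ summands in the definition of $Z$ and invoking convexity of $t\mapsto t^{\alpha n}$ gives $Z_{\alpha,x}^{(n)}\ge 2\alpha\cdot 2^{-\alpha n}$, and an elementary computation shows that $2\alpha\cdot 2^{-\alpha n}\ge\alpha^{1-\alpha^2}$ precisely when $n\le\alpha\log_2\alpha+1/\alpha$; this handles all small $n$. For $n$ past the threshold I rely on \eqref{Eq:definition-normalized-constant}: away from the endpoints, the decay estimate $|1-x+x\omega|^{\alpha n}\le(1-2x(1-x)(1-\cos(2\pi/\alpha)))^{\alpha n/2}$ for $\omega\ne 1$, combined with Lemma \ref{Lem:Estimates-integral-terms}, forces $Z$ close to $1$, while near the endpoints the two-term estimate remains sharp because one of $(1-x)^{\alpha n}$, $x^{\alpha n}$ is near $1$; gluing the two regimes yields the claimed uniform bound.

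The main obstacle I anticipate is the upper bound in the remaining case of non-integer $\alpha\ge 2$ with $\sin\alpha\pi>0$. There $-(I_1+I_2)\le 0$, so the crude estimate degrades to $Z\le|K_\alpha|=\alpha+\psi(\alpha)$, overshooting by $\psi(\alpha)$. Resolving this will require a refinement that compares the signed quantity $\sum_{\omega\ne 1}\mathrm{Re}((1-x+x\omega)^{\alpha n})-(I_1+I_2)$ to its boundary value $|K_\alpha|-1-\psi(\alpha)=\alpha-1$ rather than bounding each piece in absolute value; the exact cancellation at $x\in\{0,1\}$ strongly suggests that $Z_{\alpha,x}^{(n)}-\alpha$ admits a manifestly nonpositive integral representation for $x\in(0,1)$.
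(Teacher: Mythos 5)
Your Part (1) argument is essentially sound and is in fact a bit more self-contained than the paper's (which quotes the neo-classical inequality and \cite[Proposition~2.6]{HH} for the sign and magnitude of the remainder, rather than reading them off the integral representation as you do). Two caveats there: the leftover case of your convexity step is $\alpha n<1$, not just ``$n=1$ with $\alpha\in(0,1)$'' — for small $\alpha$ this occurs for many $n$ — but the same concavity bound $\alpha\{x^{\alpha n}+(1-x)^{\alpha n}\}\ge\alpha$ disposes of all of them, so this is only a slip in the write-up.

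The genuine gap is the one you yourself flag: the upper bound $Z_{\alpha,x}^{(n)}\le\alpha$ for non-integer $\alpha\ge 2$ with $\sin\alpha\pi>0$ (i.e.\ $\alpha-2\lfloor\alpha/2\rfloor\in(0,1)$). Your estimate degrades to $Z\le\#(K_\alpha)=\alpha+\psi(\alpha)$ there, and the ``manifestly nonpositive integral representation of $Z-\alpha$'' you hope for is not supplied; as written, part of statement (2) is simply unproved. The paper does not prove this from scratch either: it invokes Equation~(1.7) of \cite[Theorem~1.3]{GS} (the Gerhold--Simon converse to the neo-classical inequality), which gives $Z_{\alpha,x}^{(n)}\le\alpha$ for all $\alpha\ge 1$ at a stroke. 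Without citing that result or reconstructing its proof, your argument does not close. Separately, your lower-bound gluing for $\alpha\ge2$ is under-specified: splitting on $n$ and then describing the near-endpoint regime as ``one of $(1-x)^{\alpha n},x^{\alpha n}$ is near $1$'' misses the intermediate range where $\alpha nx$ is of order one — there neither factor is near $1$ and the away-from-endpoint exponential estimate can be negative. The correct splitting variable is $\alpha nx$ (for $x\le 1/2$), using $\alpha(1-x)^{\alpha n}\ge\alpha\,2^{-2\alpha nx}$ on $\{\alpha nx\le\gamma\}$ and the decay estimates on $\{\alpha nx>\gamma\}$; the constant $\alpha^{1-\alpha^2}$ comes precisely from optimizing $\gamma=(\alpha^{2}\log\alpha)/(2\log 2)$, a computation your sketch does not carry out.
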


\begin{proof}
     If $\alpha \in (0, 1)$, then \cite[Theorem~1.2]{HH} implies
     that $Z_{\alpha, x}^{(n)} \le 1$ for $n \in \N$ and $x \in [0, 1]$. 
     On the other hand, Equation (1.7) in \cite[Theorem~1.3]{GS} leads to
     $Z_{\alpha, x}^{(n)} \le \alpha$ for $n \in \N$ and $x \in [0, 1]$ 
     when $\alpha \ge 1$. 
     Hence, the uniform upper bounds of \eqref{Eq:Uniform-bound-normalized-constant-1} and 
     \eqref{Eq:Uniform-bound-normalized-constant-2} have been obtained. 

     We next discuss the lower bounds of $Z_{\alpha, x}^{(n)}$. 
     Because it holds that $Z_{\alpha, x}^{(n)}=Z_{\alpha, 1-x}^{(n)}$
     and $Z_{\alpha, 0}^{(n)}=\alpha$, 
     it suffices to consider only the case where $x \in (0, 1/2]$. 
     It follows from \cite[Proposition~2.6]{HH} that 
        \[
        R_\alpha^{(n)}(\lambda) \in 
        \begin{cases}
            (0, 1-\alpha) & \text{if }\alpha \in (0, 1), \\
            (1-\alpha, 0) & \text{if }\alpha \in (1, 2)
        \end{cases}
        \]
     for $\lambda \in (0, 1]$, where $R_\alpha^{(n)}(\lambda)$ is defined by 
     \begin{equation}\label{Eq:Remainder-term}
         R_\alpha^{(n)}(\lambda)
         =-\alpha\sum_{j=0}^n \binom{\alpha n}{\alpha j}\lambda^{\alpha j}+(1+\lambda)^{\alpha n}. 
     \end{equation}
     Letting $\lambda=x/(1-x)$ and multiplying both sides of \eqref{Eq:Remainder-term} 
     by $(1-x)^{\alpha n}$, we have 
        \[
            -\alpha\sum_{j=0}^n \binom{\alpha n}{\alpha j}x^{\alpha j}(1-x)^{\alpha(n-j)} +1 \in 
            \begin{cases}
            \left(0, (1-\alpha)(1-x)^{\alpha n}\right) \subset (0, 1-\alpha) 
            & \text{if }\alpha \in (0, 1), \\
            \left((1-\alpha)(1-x)^{\alpha n}, 0\right)
            &\text{if }\alpha \in (1, 2).
            \end{cases}
        \]
     This implies that $Z_{\alpha, x}^{(n)} > \alpha$
     if $\alpha \in (0, 1)$ and $Z_{\alpha, x}^{(n)} > 1$ if $\alpha \in (1, 2)$. 
     Hence, all that remains is the case $\alpha \ge 2$. We have 
        \begin{align}
            Z_{\alpha, x}^{(n)}
            &\ge \alpha \binom{\alpha n}{0} (1-x)^{\alpha n} \nn \\
            &=\alpha(1-x)^{\frac{1}{x} \times \alpha n x} \nn \\
            &\ge \alpha \left(1-\frac{1}{2}\right)^{2\alpha nx}
            =\alpha \times 2^{-2\alpha nx},
            \qquad n \in \N, \, x \in (0, 1/2]
            \label{Eq:pf-bound-1}
        \end{align}
    by noting the fact that the function $(0, 1/2] \ni x \longmapsto (1-x)^{1/x} \in \R$ 
    is monotonically decreasing. 
    Let $\gamma>0$ be a constant that is specified later. 
    Then, \eqref{Eq:pf-bound-1} implies that 
        \[
        Z_{\alpha, x}^{(n)} \ge \alpha \times 2^{-2\gamma},
        \qquad n \in \N, \, x \in (0, 1/2] \,\, \text{with $\alpha nx \le \gamma$.}
        \]
    We now deal with the upper bound of 
    $\sum_{\omega \in K_\alpha \setminus \{1\}}(1-x+x\omega)^{\alpha n}$
    for $n \in \N$ and $x \in (0, 1/2]$ with $\alpha nx > \gamma$. 
    From \eqref{Eq:Estimate-1-x+xomega}, we have 
    \begin{align}
        &\sum_{\omega \in K_\alpha \setminus \{1\}}(1-x+x\omega)^{\alpha n} \nn \\
        &\le \sum_{\omega \in K_\alpha \setminus \{1\}}
        \{1-2x(1-x)(1-\mathrm{Re}(\omega))\}^{\alpha n/2} \nn \\
        &\le \sum_{\omega \in K_\alpha \setminus \{1\}}
        \left[\exp\{-2x(1-x)(1-\mathrm{Re}(\omega))\}\right]^{\alpha n/2} \nn \\
        &\le \alpha \exp\left\{-\alpha n x(1-x)\left(1-\max_{\omega \in K_\alpha \setminus \{1\}}\mathrm{Re}(\omega)\right)\right\} \nn \\
        &\le \alpha \exp\left(-\alpha nx \times \frac{1}{2} \times \frac{8}{\alpha^2}\right)
        =\alpha \exp\left(-\alpha nx \times \frac{4}{\alpha^2}\right)
        \label{Eq:pf-bound-2}
    \end{align}
    for $n \in \N$ and $x \in (0, 1/2]$, 
    where we used 
    $\#(K_\alpha)=\lfloor \alpha/2 \rfloor +\lceil \alpha/2 \rceil \le \alpha+1$
    in the fourth line and 
        \[
            \max_{\omega \in K_\alpha \setminus \{1\}}
            \mathrm{Re}(\omega)=\cos \frac{2\pi}{\alpha} \le 1-\frac{8}{\alpha^2}
        \]
    in the last line. 
    Here, the estimate $\cos(2\pi/\alpha) \le 1-8/\alpha^2$ follows from the inequality 
        \[
            \cos t \le 1 -\frac{2t^2}{\pi^2}, \qquad t \in [0, \pi],
        \]
    which is straightforward by elementary calculus. 
    Therefore, we obtain
        \begin{align*}
            Z_{\alpha, x}^{(n)}
            &\ge 1-\alpha \exp\left(-\alpha nx \times \frac{4}{\alpha^2}\right)
            -\{(1-x)^{\alpha n}+x^{\alpha n}\}\psi(\alpha) \\
            &\ge 1-\alpha \exp\left(-\alpha nx \times \frac{4}{\alpha^2}\right)
            -2e^{-\alpha nx}
            \end{align*}
    by applying \eqref{Eq:pf-bound-2} and Lemma~\ref{Lem:Estimates-integral-terms} 
    to \eqref{Eq:definition-normalized-constant}. 
    In particular, we see that 
        \[
            Z_{\alpha, x}^{(n)} \ge 
            1-\alpha \exp\left(-\frac{4\gamma}{\alpha^2}\right)-2e^{-\gamma}
        \]
    when $\alpha nx>\gamma$. Bringing everything together, it holds that 
        \[
            Z_{\alpha, x}^{(n)}
            \ge \begin{cases}
            \alpha \times 2^{-2\gamma} & \text{if }\alpha nx \le \gamma,\\
            \dis 1-\alpha \exp\left(-\frac{4\gamma}{\alpha^2}\right)-2e^{-\gamma}
            & \text{if }\alpha nx >\gamma.
            \end{cases}
        \]
    Letting $\gamma=(\alpha^2 \log \alpha)/(2\log 2)$, we have 
        \[
            \alpha \times 2^{-2\gamma}=\alpha \times 2^{-\alpha^2\log_2 \alpha}
            =\alpha^{1-\alpha^2} \le 2^{1-2^2}=0.125
        \]
    and 
        \begin{align*}
            1-\alpha \exp\left(-\frac{4\gamma}{\alpha^2}\right)-2e^{-\gamma}
            &= 1-\alpha e^{-2\log_2\alpha}-2\alpha^{-\alpha^2/(2\log 2)} \\
            &=1-\alpha^{1-2/\log 2}-2\alpha^{-\alpha^2/(2\log 2)} \\
            &\ge 1-2^{1-2/\log 2}-2 \times 2^{-2^2/(2\log 2)}\\
            &=1-4e^{-2}=0.458\cdots.
            \end{align*}
    Therefore, we obtain  
        \[
            Z_{\alpha, x}^{(n)} \ge \min
            \left\{\alpha \times 2^{-2\gamma}, 1-\alpha 
            \exp\left(-\frac{4\gamma}{\alpha^2}\right)-2e^{-\gamma}\right\}
            \Bigg|_{\gamma=(\alpha^2\log \alpha)/(2\log 2)}
            =\alpha^{1-\alpha^2}
        \]
    for $n \in \N$ and $x \in (0, 1/2]$. 
\end{proof}

\subsection{Uniform estimates for moments of the fractional binomial distribution}
\label{Subsect:uniform estimates}

In Theorem~\ref{Thm:Moment-fractional-binomial-distribution}, 
we discussed the explicit representations of moments 
of the fractional binomial distribution $\mu_{\alpha, x}^{(n)}$. 
Here, we discuss their uniform estimates with respect to $n \in \N$ and $x \in [0, 1]$. 
In order to deduce them, the following lemma is useful. 

\begin{lm}\label{Lem:pre-ineq}
    Let $p>0$ and $q>0$. Then, we have 
        \[
            \max_{t \in [0, 1]}t^p(1-t)^q \le \left(\frac{p}{eq}\right)^p. 
        \]
\end{lm}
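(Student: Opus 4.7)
The plan is to locate the maximizer of $f(t) := t^p(1-t)^q$ on $[0,1]$ by elementary calculus, evaluate $f$ there, and then reduce the claimed bound to a one-variable convexity inequality.

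First I would differentiate: $f'(t) = t^{p-1}(1-t)^{q-1}\bigl[p - (p+q)t\bigr]$, so the unique interior critical point is $t^\ast = p/(p+q)$, and $f$ vanishes at the endpoints $t=0, 1$. Hence
\begin{equation*}
    \max_{t \in [0,1]} t^p(1-t)^q
    = f(t^\ast)
    = \frac{p^p q^q}{(p+q)^{p+q}}.
\end{equation*}

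It then remains to verify $\dfrac{p^p q^q}{(p+q)^{p+q}} \le \dfrac{p^p}{e^p q^p}$, which, after clearing the common factor $p^p$ and rearranging, is equivalent to
\begin{equation*}
    e^p \;\le\; \left(1 + \frac{p}{q}\right)^{p+q},
\end{equation*}
or, taking logarithms and setting $u := p/q > 0$, to the single-variable inequality
\begin{equation*}
    g(u) := (1+u)\log(1+u) - u \;\ge\; 0, \qquad u > 0.
\end{equation*}

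The last step is the only substantive point, and it is standard: $g(0) = 0$ and $g'(u) = \log(1+u) > 0$ for $u > 0$, so $g$ is strictly increasing on $[0, \infty)$ and therefore non-negative there. This closes the chain of reductions and proves the lemma. I do not foresee any real obstacle; the only thing to keep an eye on is that $p, q > 0$ guarantees $t^\ast \in (0,1)$ and that $u = p/q$ is strictly positive, so the strict inequality in $g'$ applies.
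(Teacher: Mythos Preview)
Your proof is correct and follows essentially the same route as the paper: both locate the maximizer at $t^\ast = p/(p+q)$ and then bound $f(t^\ast)$ using the elementary inequality $1-x \le e^{-x}$ (the paper writes it as $(1-x)^{1/x}\le e^{-1}$, you as $(1+u)\log(1+u)\ge u$; these are equivalent via $x=u/(1+u)$). The only cosmetic difference is that the paper factors $f(t^\ast)$ as $(p/q)^p\bigl\{(1-p/(p+q))^{(p+q)/p}\bigr\}^p$ and applies the inequality directly, whereas you first simplify to $p^p q^q/(p+q)^{p+q}$ and then take logarithms.
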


\begin{proof}
    Let $g(t)=t^p(1-t)^q$ for $t \in [0, 1]$. 
    Then, we have 
        \[
            g'(t)=t^{p-1}(1-t)^{q-1}\{p(1-t)-qt\}.
        \]
    Therefore, we see that the function $g$ attains its maximum at $t=p/(p+q)$, which is
        \[
            g\left(\frac{p}{p+q}\right)
            =\left(\frac{p}{q}\right)^p \left\{\left(1-\frac{p}{p+q}\right)^{\frac{p+q}{p}}\right\}^p 
            \le  \left(\frac{p}{eq}\right)^p. \qedhere
        \] 
\end{proof}

The purpose of this section is to show the following. 

\begin{tm}\label{Thm:Unifrom-moment-estimate}
    Let $m \in \N$ and $K$ be a compact subset of $(0, \infty)$. 
    Then, we have 
    \begin{align}\label{Eq:Uniform-moment-estimate}
        &\sup_{\alpha \in K}\max_{x \in [0, 1]}
        \Bigg|\alpha \sum_{j=0}^n \binom{\alpha n}{\alpha j}
        x^{\alpha j}(1-x)^{\alpha(n-j)}\left(\frac{j}{n}-x\right)^m \nn \\
        &\qquad\qquad-(\alpha n)^{-m}\sum_{k=0}^m W_{-\alpha nx}(m, k)(\alpha n)_kx^k
        \Bigg|=O\left(\frac{1}{n^m}\right) 
    \end{align}
    as $n \to \infty$. 
\end{tm}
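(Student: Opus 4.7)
The plan is to apply Theorem~\ref{Thm:Moment-fractional-binomial-distribution} with $c = -\alpha n x$. Since $(\alpha j + c)^m = (\alpha n)^m (j/n - x)^m$, dividing both sides of~\eqref{Eq:Moment-fractional-binomial-distribution} by $(\alpha n)^m$ expresses the quantity inside $|\cdots|$ in~\eqref{Eq:Uniform-moment-estimate} as $(\alpha n)^{-m}(T_2 + T_3 + T_4)$, where $T_2, T_3, T_4$ denote the second, third, and fourth terms on the right-hand side of~\eqref{Eq:Moment-fractional-binomial-distribution} specialized to $c = -\alpha n x$. The task therefore reduces to showing that $|T_2| + |T_3| + |T_4|$ is bounded uniformly in $\alpha \in K$ and $x \in [0, 1]$ as $n \to \infty$.

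The substitution $j \mapsto n-j$ reveals a symmetry: the expression inside $|\cdots|$ in~\eqref{Eq:Uniform-moment-estimate} evaluated at $(\alpha, x)$ equals $(-1)^m$ times its value at $(\alpha, 1-x)$, and the subtracted main term satisfies the same relation (verify using~\eqref{Eq:Examples-leading-terms} for small $m$, and in general by noting that applying Theorem~\ref{Thm:Moment-fractional-binomial-distribution} with $c$ replaced by $-(c + \alpha n)$ on the dual side yields this symmetry). It therefore suffices to establish the uniform bound for $x \in [0, 1/2]$.

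The key ingredient for controlling the integral terms is the uniform pointwise bound
\begin{equation*}
    \bigl|h_{-\alpha n x}^{(m)}(-t)\bigr| \le C_{m,K}\bigl(1 + (\alpha n x)^m\bigr), \qquad t \in [0, 1]
\end{equation*}
valid for $\alpha n \ge 2m$. This follows from the expansion~\eqref{Eq:Representation-h_c^m}, the coefficient bound~\eqref{Eq:Estimate-W_c} on $W_c(m, k)$, and Lemma~\ref{Lem:pre-ineq} applied to $\max_t t^k(1-t)^{\alpha n -k} \le (k/(e(\alpha n -k)))^k$; the latter cancels the factor $(\alpha n)_k$, leaving each summand bounded by $C_{m, K}((\alpha n x)^{m-k} + 1)$, and the sum over $k$ yields the stated estimate.

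With this bound in hand, $T_3$ is estimated as follows: by the substitution $s = ((1-x)t/x)^\alpha$ from the proof of Lemma~\ref{Lem:Estimates-integral-terms}, the prefactor times $\int_0^1 t^{\alpha-1}/|(t(1-x))^\alpha - x^\alpha e^{-i\alpha\pi}|^2 \, \dd t$ is shown to be bounded by $(1-x)^{\alpha n}\psi(\alpha)$ (using the identity $|\sin\alpha\pi|\int_0^\infty \dd s/|s - e^{-i\alpha\pi}|^2 = \pi\psi(\alpha)$ employed there); combining with the uniform bound on $h^{(m)}$ and $(1-x)^{\alpha n} \le e^{-\alpha n x}$ together with $\sup_{u \ge 0}(u^m + 1) e^{-u} < \infty$ gives $|T_3| = O(1)$. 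The analogous estimate for $T_4$ yields $|T_4| \le C(1 + (\alpha n(1-x))^m) x^{\alpha n}\psi(\alpha) \le C(\alpha n)^m 2^{-\alpha n} = o(1)$ on $[0, 1/2]$. For $T_2$, combine $|1 - x + x\omega|^{\alpha n -k} \le \exp(-c_K \alpha n x)$ for $\omega \in K_\alpha \setminus \{1\}$ (obtained from~\eqref{Eq:Estimate-1-x+xomega}, the inequality $x(1-x) \ge x/2$ on $[0, 1/2]$, and a uniform lower bound $\rho_K > 0$ on $1 - \mathrm{Re}(\omega)$ over $\omega \ne 1$ and $\alpha \in K$) with $|W_{-\alpha n x}(m,k) (\alpha n)_k x^k| \le C((\alpha n x)^m + (\alpha n x)^k)$ and $\sup_u (u^m + u^k) e^{-c_K u} < \infty$ to obtain $|T_2| = O(1)$. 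The main technical point is the uniform pointwise bound on $h_{-\alpha n x}^{(m)}$, which eliminates any polynomial-in-$n$ growth in the integrand and permits a clean application of the integral estimates already established in Lemma~\ref{Lem:Estimates-integral-terms}.
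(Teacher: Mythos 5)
Your proposal is correct and follows essentially the same route as the paper: the same specialization of Theorem~\ref{Thm:Moment-fractional-binomial-distribution} to $c=-\alpha nx$, the same reduction to $x\in[0,1/2]$ by the $x\mapsto 1-x$ symmetry, and the same uniform bounds built from \eqref{Eq:Estimate-W_c}, Lemma~\ref{Lem:pre-ineq}, and the integral computation inside Lemma~\ref{Lem:Estimates-integral-terms} (your exponential bounds $(1-x)^{\alpha n}\le e^{-\alpha nx}$ and $\sup_u(u^m+1)e^{-u}<\infty$ are interchangeable with the paper's repeated use of Lemma~\ref{Lem:pre-ineq}). The one step where you are looser than the paper is the symmetry of the subtracted main term, $\sum_k W_{-\alpha n(1-x)}(m,k)(\alpha n)_k(1-x)^k=(-1)^m\sum_k W_{-\alpha nx}(m,k)(\alpha n)_kx^k$: reindexing $j\mapsto n-j$ only gives the symmetry of the full sum, not of this individual term, and the paper proves it by first observing that the $K_\alpha\setminus\{1\}$ contribution vanishes for $\alpha\in(0,2)$ (forcing the identity there) and then extending to all $\alpha>0$ by polynomiality in $\alpha$; your parenthetical sketch should be fleshed out along these lines, but this is a repairable gap in exposition rather than in substance.
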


\begin{proof}
    Let us set 
        \[
            M_{\alpha, m}^{(n)}(x)
            := \alpha \sum_{j=0}^n \binom{\alpha n}{\alpha j}
            x^{\alpha j}(1-x)^{\alpha(n-j)}\left(\frac{j}{n} - x\right)^m. 
        \]
    By continuity, it suffices to estimate the supremum over $x \in (0, 1)$. 
    The proof is split into three parts.

    \smallskip
    \noindent
    {\bf Step 1}. 
    By applying Theorem~\ref{Thm:Moment-fractional-binomial-distribution} with $c=-\alpha nx$, we have 
    \begin{align}
        &(\alpha n)^m  M_{\alpha, m}^{(n)}(x) \nn \\
        &= \sum_{k=0}^m W_{-\alpha nx}(m, k)(\alpha n)_k x^k \nn \\
        &\quad+\sum_{k=0}^m W_{-\alpha nx}(m, k)(\alpha n)_k \Bigg\{ 
        \sum_{\omega \in K_\alpha \setminus \{1\}}(x \omega)^k(1-x+x\omega)^{\alpha n-k}\Bigg\} \nn \\
        &\quad-\Bigg\{\frac{\alpha x^\alpha
        (1-x)^{\alpha(n+1)}\sin \alpha \pi}{\pi} 
        \int_0^1 \frac{t^{\alpha-1}h_{-\alpha nx}^{(m)}(-t)}{|(t(1-x))^\alpha - x^\alpha e^{-i \alpha \pi}|^2} \, \dd t \nn \\
        &\quad+(-1)^m \frac{\alpha x^{\alpha(n+1)}(1-x)^\alpha \sin \alpha \pi}{\pi}
        \int_0^1 \frac{t^{\alpha-1}h_{-\alpha n(1-x)}^{(m)}(-t)}{|(1-x)^{\alpha}e^{-i \alpha \pi}-(tx)^\alpha|^2} \, \dd t\Bigg\} \nn \\
        &=: \I_{\alpha, m}^{(n)}(x) 
        + \II_{\alpha, m}^{(n)}(x)
        -\III_{\alpha, m}^{(n)}(x). 
        \label{Eq:Estimate-I+II+III}
    \end{align}
    We note here that 
    \begin{align*}
        M_{\alpha, m}^{(n)}(1-x)
        &= (-1)^m  \alpha \sum_{j=0}^n 
        \binom{\alpha n}{\alpha j}(1-x)^{\alpha j}x^{\alpha(n-j)}\left(\frac{n-j}{n}-x\right)^m \\
        &=(-1)^m  \alpha \sum_{k=0}^n 
        \binom{\alpha n}{\alpha n -\alpha k}(1-x)^{\alpha (n-k)}x^{\alpha k}
        \left(\frac{k}{n}-x\right)^m \\
        &=(-1)^m M_{\alpha, m}^{(n)}(x)
    \end{align*}
    and 
    \begin{align*}
        \III_{\alpha, m}^{(n)}(1-x)
        &= \frac{\alpha (1-x)^\alpha
        x^{\alpha(n+1)}\sin \alpha \pi}{\pi} 
        \int_0^1 \frac{t^{\alpha-1}h_{-\alpha n(1-x)}^{(m)}(-t)}{|(tx)^\alpha - (1-x)^\alpha e^{-i \alpha \pi}|^2} \, \dd t \\
        &\quad+(-1)^m \frac{\alpha (1-x)^{\alpha(n+1)}x^\alpha \sin \alpha \pi}{\pi}
        \int_0^1 \frac{t^{\alpha-1}h_{-\alpha nx}^{(m)}(-t)}{|x^{\alpha}e^{-i \alpha \pi}-(t(1-x))^\alpha|^2} \, \dd t \\
        &=(-1)^m \III_{\alpha, m}^{(n)}(x)
    \end{align*}
    for $x \in (0, 1)$. 
    Furthermore, $\II_{\alpha, m}(x)=0$ if $\alpha \in (0, 2)$ because $K_\alpha=\{1\}$. 
    Therefore, it follows from \eqref{Eq:Estimate-I+II+III} that for $\alpha \in (0, 2)$, 
    \begin{equation}\label{Eq:symmetry-1}
        \I_{\alpha, m}^{(n)}(1-x)=
        (-1)^m\I_{\alpha, m}^{(n)}(x). 
    \end{equation}
    Because $\I_{\alpha, m}^{(n)}(x)$
    is a polynomial in $\alpha$ thanks to Lemma~\ref{Lem:Representation-h_c^m}, 
    \eqref{Eq:symmetry-1} holds true for arbitrary $\alpha>0$, 
    which in turn implies that 
    \begin{equation}\label{Eq:symmetry-2}
        \II_{\alpha, m}^{(n)}(1-x)
        =(-1)^m\II_{\alpha, m}^{(n)}(x), \qquad \alpha>0. 
    \end{equation}
    In what follows, the integer $n \in \N$ is taken to be sufficiently large 
    so that $\alpha n \ge m+1$. 

    \smallskip
    \noindent
    {\bf Step 2.}
    We give a uniform estimate of $\II_{\alpha, m}^{(n)}(x)$. 
    Suppose that $\omega \in K_\alpha \setminus \{1\}$ and $x \in (0, 1/2]$. 
    For fixed $k \in \N_0$ with $0 \le k \le m$, we have 
    \begin{align*}
        &|W_{-\alpha nx}(m, k)(\alpha n)_k(x\omega)^k(1-x+x\omega)^{\alpha n-k}| \\
        &\le C_{2,1}\{(\alpha nx)^{m-k}+1\} 
        (\alpha n)^k x^k \{1-2x(1-x)(1-\mathrm{Re}(\omega))\}^{(\alpha n-k)/2} \\
        &\le C_{2,1}\{(\alpha nx)^m + (\alpha nx)^k\}\{1-x(1-\mathrm{Re}(\omega))\}^{(\alpha n-k)/2} \\
        &\le C_{2,1}\left(\frac{m}{e (\alpha n-k)/2}\right)^m
        \left(\frac{\alpha n}{1-\mathrm{Re}(\omega)}\right)^m
        +C_{2,1}\left(\frac{k}{e (\alpha n-k)/2}\right)^k
        \left(\frac{\alpha n}{1-\mathrm{Re}(\omega)}\right)^k \\
        &\le C_{2,5}
    \end{align*}
    by using \eqref{Eq:Estimate-1-x+xomega}, \eqref{Eq:Estimate-W_c}, and Lemma~\ref{Lem:pre-ineq}.
    Thus, we have
    \[
        \sup_{x \in (0, 1/2]}|\II_{\alpha, m}^{(n)}(x)|
        \le (m+1)(\#(K_\alpha)-1)C_{2,5} \le C_{2,6}, 
    \]
    where $C_{2,6}>0$ depends on only $m \in \N$ and a compact set $K$ in which $\alpha$ takes values. 
    By \eqref{Eq:symmetry-2}, we obtain that
    \begin{equation}\label{Eq:estimate of II}
        \sup_{x \in (0, 1)}|\II_{\alpha, m}^{(n)}(x)|=
        \sup_{x \in (0, 1/2]}|\II_{\alpha, m}^{(n)}(x)| \le C_{2,6}.
    \end{equation}
     
    \smallskip
    \noindent
    {\bf Step 3.}
    We next give a uniform estimate of $\III_{\alpha, m}^{(n)}(x)$.
    We write 
    \[
        \III_{\alpha, m}^{(n)}(x)
        =\widetilde{\III}_{\alpha, m}^{(n)}(x)
        +(-1)^m \widetilde{\III}_{\alpha, m}^{(n)}(1-x) 
    \]
    with 
    \[
        \widetilde{\III}_{\alpha, m}^{(n)}(x)
        =\frac{\alpha x^\alpha
        (1-x)^{\alpha(n+1)}\sin \alpha \pi}{\pi} 
        \int_0^1 \frac{t^{\alpha-1}h_{-\alpha nx}^{(m)}(-t)}{|(t(1-x))^\alpha - x^\alpha 
        e^{-i \alpha \pi}|^2} \, \dd t.
    \]
    It follows from \eqref{Eq:Estimate-W_c} that 
    \begin{align*}
        |h_{-\alpha nx}^{(m)}(-t)|
        &= \left|\sum_{k=0}^m W_{-\alpha nx}(m, k)(-\alpha nx)(\alpha n)_k
        (-t)^k (1-t)^{\alpha n-k}\right| \\
        &\le C_{2,7}\sum_{k=0}^m \{(\alpha nx)^{m-k}+1\}(\alpha n)^k t^k(1-t)^{\alpha n-k}.
    \end{align*}
    Thus, we have 
    \begin{align*}
        |\widetilde{\III}_{\alpha, m}^{(n)}(x)|
        &\le C_{2,8}\sum_{k=0}^m x^\alpha (1-x)^{\alpha (n+1)} |\sin \alpha \pi| \\
        &\quad\times \{(nx)^{m-k}+1\}
        \int_0^1 \frac{t^{\alpha-1} n^kt^{k}(1-t)^{\alpha n-k}}{|(t(1-x))^\alpha - x^\alpha e^{-i \alpha \pi}|^2} \, \dd t.
    \end{align*}
    By noting 
    \[
        n^kt^{k}(1-t)^{\alpha n-k}
        \le n^k \left(\frac{k}{e (\alpha n-k)}\right)^k \le C_{2,9}
    \]
    and Lemma~\ref{Lem:Estimates-integral-terms}, we have 
    \begin{align*}
        |\widetilde{\III}_{\alpha, m}^{(n)}(x)|
        &\le C_{2,10}\sum_{k=0}^m (1-x)^{\alpha n} 
        \{(nx)^{m-k}+1\} \\
        &= C_{2,10}\sum_{k=0}^m \{(1-x)^{\alpha n}(nx)^{m-k}+(1-x)^{\alpha n}\} \\
        &\le C_{2,10}\sum_{k=0}^m \left\{ \left(\frac{m-k}{e\alpha n}\right)^{m-k}n^{m-k}+1\right\} \le C_{2,11},
    \end{align*}
    from which we conclude that 
    \begin{equation}\label{Eq:estimate of III}
        \sup_{x \in (0, 1)}|\III_{\alpha, m}^{(n)}(x)| \le C_{2,12}, 
    \end{equation}
    where $C_{2,12}>0$ depends on only $m \in \N$ and $K$ in which $\alpha$ takes values.  
    
    By combining \eqref{Eq:Estimate-I+II+III} with 
    \eqref{Eq:estimate of II} and \eqref{Eq:estimate of III}, we obtain that
    \[
        \sup_{x \in (0, 1)}|(\alpha n)^m M_{\alpha, m}^{(n)}(x) 
        - \I_{\alpha, m}^{(n)}(x)|
        = \sup_{x \in (0, 1)}|\II_{\alpha, m}^{(n)}(x)
        -\III_{\alpha, m}^{(n)}(x)| \le C_{2,13}.
    \]
    This implies
    \eqref{Eq:Uniform-moment-estimate} once we divide both sides of the above inequality by $(\alpha n)^m$ and take the supremum over $\alpha \in K$. 
\end{proof}

We also have the following 
by applying Theorems~\ref{Thm:Uniform-bound-normalized-constant} and \ref{Thm:Unifrom-moment-estimate}, and \eqref{Eq:Examples-leading-terms}.

\begin{co}\label{Cor:Moment-estimate}
    Let $K$ be a compact subset of $(0, \infty)$. As $n \to \infty$, 
    \[
        \sup_{\alpha \in K} \max_{x \in [0, 1]}
        \left|\E\left[ \left(\frac{S_{\alpha, x}^{(n)}}{n}-x \right)^m\right]\right|
        =\begin{cases}
        \displaystyle O\left(\frac{1}{n}\right) & \text{if }m=1, 2, \\
        \displaystyle O\left(\frac{1}{n^2}\right) & \text{if }m=3, 4. 
        \end{cases}
    \]
\end{co}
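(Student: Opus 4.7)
The plan is to combine Theorem~\ref{Thm:Unifrom-moment-estimate} with the explicit evaluations collected in \eqref{Eq:Examples-leading-terms}, and to examine each of the four cases $m=1,2,3,4$ separately. Let $T_{\alpha,m}^{(n)}(x)$ denote the quantity inside the absolute value in the statement. Theorem~\ref{Thm:Unifrom-moment-estimate} immediately yields
\[
T_{\alpha,m}^{(n)}(x) = (\alpha n)^{-m}\sum_{k=0}^m W_{-\alpha nx}(m,k)\,(\alpha n)_k\, x^k + R_{\alpha,m}^{(n)}(x),
\]
where $\sup_{\alpha \in K}\max_{x \in [0,1]}|R_{\alpha,m}^{(n)}(x)| = O(n^{-m})$ as $n \to \infty$. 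Since $K$ is a compact subset of $(0,\infty)$, one has $\inf_{\alpha \in K}\alpha > 0$, so every power $(\alpha n)^{-k}$ is controlled by a constant multiple of $n^{-k}$ uniformly in $\alpha \in K$. Consequently the work reduces to reading off the order of the main term in each case.

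Substituting the four explicit polynomials from \eqref{Eq:Examples-leading-terms} then gives: for $m=1$, the main term vanishes identically, so only the $O(1/n)$ error remains; for $m=2$, the main term equals $x(1-x)/(\alpha n)$, which is uniformly $O(1/n)$ and dominates the $O(1/n^2)$ error; for $m=3$, the main term equals $x(1-x)(1-2x)/(\alpha n)^2$, uniformly $O(1/n^2)$, dominating the $O(1/n^3)$ error; and for $m=4$, the main term expands as
\[
\frac{3(\alpha n - 2)\, x^2(1-x)^2}{(\alpha n)^3} + \frac{x(1-x)}{(\alpha n)^3},
\]
whose dominant contribution $3x^2(1-x)^2/(\alpha^2 n^2)\cdot(1-2/(\alpha n))$ is uniformly $O(1/n^2)$, absorbing the $O(1/n^4)$ error. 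In all four cases, the polynomial factors in $x$ are bounded on $[0,1]$ and the $\alpha$-dependent constants are uniformly bounded on $K$.

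There is no substantive obstacle: the corollary is effectively a bookkeeping consequence of Theorem~\ref{Thm:Unifrom-moment-estimate}, requiring only the verification that the leading polynomial in $(\alpha n)^{-m}\I_{\alpha,m}^{(n)}(x)$ is of the announced order and that $\inf_{\alpha \in K}\alpha>0$ permits us to convert $(\alpha n)^{-k}$ bounds into $n^{-k}$ bounds uniformly. The only mild point worth noting is that for $m=4$ the main term consists of two pieces of different orders in $n$, and one must verify that even the larger piece is still $O(1/n^2)$, which is immediate from $(\alpha n - 2)/(\alpha n)^3 = O(1/n^2)$.
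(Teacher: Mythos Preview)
Your proposal is correct and follows exactly the paper's approach: the corollary is stated there as an immediate consequence of Theorem~\ref{Thm:Unifrom-moment-estimate} together with the explicit evaluations in \eqref{Eq:Examples-leading-terms}, and your case-by-case verification is precisely the bookkeeping that justifies this.
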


\subsection{Characteristic function of the fractional binomial distribution}
\label{Subsect:CF}

In this section, we obtain a quantitative estimate of the characteristic function of 
the fractional binomial distribution $\mu_{\alpha, x}^{(n)}$, that is, 
    \[
        \varphi_{\alpha, x}^{(n)}(\xi)
        :=\sum_{j=0}^n e^{i\xi j}\mu_{\alpha, x}^{(n)}(\{j\}), \qquad \xi \in \R.
    \]
We define $\underline{\alpha}:=2\lfloor \alpha/2 \rfloor$
and $\ol{\alpha}:=2\lceil \alpha/2 \rceil$ for $\alpha>0$.
Moreover, we put 
    \[
        \theta_\alpha:=
        \begin{cases}
        \{(\alpha-\underline{\alpha}) \wedge (\ol{\alpha}-\alpha)\}\pi 
        & \text{if $\alpha \notin \N$}, \\ 
        2\pi &  \text{if $\alpha \in \N$}.
        \end{cases}
    \]
We then have the following lemma. 

\begin{lm}\label{Lem:analytic continuation}
    Let $\alpha>0$. 

    \smallskip
    \noindent
    {\rm (1)} If $\alpha \in \N$, we have 
    \begin{equation}\label{Eq:analytic-continuation-2}
        \alpha\sum_{j=0}^n \binom{\alpha n}{\alpha j}\lambda^{\alpha j} 
        =\sum_{\omega \in K_\alpha}(1+\lambda \omega)^{\alpha n}
    \end{equation}
    for $\lambda \in \mathbb{C}$. 
    
    \smallskip
    \noindent
    {\rm (2)} If $\alpha \notin \N$, we have 
    \begin{align}
        &\alpha\sum_{j=0}^n \binom{\alpha n}{\alpha j}\lambda^{\alpha j} \nn \\
        &=\sum_{\omega \in K_\alpha}(1+\lambda \omega)^{\alpha n}
        -\frac{\alpha\lambda^\alpha \sin \alpha \pi}{\pi}
        \int_0^1 t^{\alpha-1}(1-t)^{\alpha n} \nn \\
        &\quad\times\Bigg\{
        \frac{1}{(t^\alpha-\lambda^\alpha e^{-i\alpha \pi})(t^\alpha-\lambda^\alpha e^{i\alpha \pi})}
        +\frac{\lambda^{\alpha (n-2)}}{(\lambda^{-\alpha}e^{-i\alpha \pi}- t^\alpha)(\lambda^{-\alpha}e^{i\alpha \pi}- t^\alpha)}\Bigg\} \, \dd t
        \label{Eq:analytic-continuation-1}
    \end{align}
    for $\lambda \in D_\alpha:=\{re^{i\theta} \mid r>0, \, |\theta| < \theta_\alpha/\alpha\}$.
\end{lm}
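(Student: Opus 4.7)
The plan is to obtain both parts by continuing the identity of Proposition~\ref{Prop:generalization-binomial-theorem-HH}---which is known on $(0,1]$---to a larger complex domain. The left-hand side of both equations is a holomorphic (indeed, polynomial in suitable cases) function of $\lambda$, so the only real work is to recognize the correct target domain on which the right-hand side is holomorphic and then invoke the identity theorem.

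For part~(1), when $\alpha \in \N$ both sides of \eqref{Eq:analytic-continuation-2} are polynomials in $\lambda$: the left-hand side is a finite sum of monomials $\lambda^{\alpha j}$, and each summand $(1+\lambda\omega)^{\alpha n}$ on the right is a polynomial of degree $\alpha n$. By Proposition~\ref{Prop:generalization-binomial-theorem-HH}, the two polynomials agree on $(0,1]$ because the integral remainder vanishes: either through the stated convention when $\alpha/2 \in \N$, or because $\sin\alpha\pi=0$ when $\alpha$ is an odd integer (in which case $e^{-i\alpha\pi}=-1$ makes the integrand manifestly integrable). Two polynomials agreeing on an interval coincide on $\mathbb{C}$, so \eqref{Eq:analytic-continuation-2} follows.

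For part~(2), I would show that both sides of \eqref{Eq:analytic-continuation-1} define holomorphic functions of $\lambda$ on the open connected domain $D_\alpha$. The left-hand side is holomorphic on $\mathbb{C}\setminus(-\infty,0]\supset D_\alpha$ because each $\lambda\mapsto\lambda^{\alpha j}$ is. On the positive real axis the factorized denominators in \eqref{Eq:analytic-continuation-1} reduce to the absolute values squared appearing in \eqref{Eq:Generalized-binomial-theorem-HH}, since $\lambda^\alpha e^{-i\alpha\pi}$ and $\lambda^\alpha e^{i\alpha\pi}$ are complex conjugates for real $\lambda>0$. Hence Proposition~\ref{Prop:generalization-binomial-theorem-HH} gives the equality on $(0,1)\subset D_\alpha$, and the identity theorem on the connected open set $D_\alpha$ then propagates it throughout $D_\alpha$.

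The main obstacle is to verify that the right-hand side of \eqref{Eq:analytic-continuation-1} really is holomorphic on $D_\alpha$, which boils down to two angular checks on $\lambda=re^{i\theta}$. First, for each $\omega=e^{2\pi ik/\alpha}\in K_\alpha$, the factor $(1+\lambda\omega)^{\alpha n}=\exp(\alpha n\,\mathrm{Log}(1+\lambda\omega))$ requires $1+\lambda\omega\notin(-\infty,0]$, which forbids $\theta\equiv\pi-2k\pi/\alpha\pmod{2\pi}$. Second, for $t\in(0,1)$, the denominators $t^\alpha-\lambda^{\pm\alpha}e^{\pm i\alpha\pi}$ must not vanish, and since $t^\alpha>0$ this forbids $\theta\equiv\mp\pi+2k\pi/\alpha\pmod{2\pi}$. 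Running over the integer values of $k$ with $-\alpha/2<k\le\alpha/2$, a direct enumeration shows that the closest such forbidden angle to $\theta=0$ has absolute value exactly $\{(\alpha-\underline{\alpha})\wedge(\ol{\alpha}-\alpha)\}\pi/\alpha=\theta_\alpha/\alpha$, so the defining condition $|\theta|<\theta_\alpha/\alpha$ of $D_\alpha$ is precisely what makes both the summands and the integrand well-behaved. A standard dominated-convergence/Morera argument then gives holomorphy of the integral in $\lambda$, completing the argument.
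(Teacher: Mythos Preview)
Your proposal is correct and follows essentially the same route as the paper: invoke Proposition~\ref{Prop:generalization-binomial-theorem-HH} on $(0,1]$, check that both sides extend holomorphically to the claimed domain by the same angular analysis (the paper carries out the bookkeeping for $\arg(e^{\pm i\alpha\pi})$ in $(0,2\pi)$ rather than enumerating the forbidden rays, but the computation is equivalent), and conclude by analytic continuation. The only cosmetic difference is that the paper does not separate out the Morera/dominated-convergence justification for holomorphy of the integral term, treating it as routine.
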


\begin{proof}
    We assume $\alpha \in \N$. Then, \eqref{Eq:analytic-continuation-2} holds 
    for all $\lambda \in (0, 1]$ by Proposition \ref{Prop:generalization-binomial-theorem-HH}. 
    Because both sides of \eqref{Eq:analytic-continuation-2} are holomorphic on $\mathbb{C}$, 
    \eqref{Eq:analytic-continuation-2} holds for all
    $\lambda \in \mathbb{C}$ by the identity theorem for holomorphic functions.
    
    We next assume $\alpha \notin \N$.  
    It then follows from Proposition~\ref{Prop:generalization-binomial-theorem-HH} 
    that \eqref{Eq:analytic-continuation-1} holds for $\lambda \in (0, 1]$. 
    We now find a domain of $\lambda \in \mathbb{C}$ including $(0, 1)$ in which 
    the functions defined by both sides of \eqref{Eq:analytic-continuation-1} are holomorphic. 
    Let us put $\lambda=re^{i\theta}$ with $r > 0$ and $|\theta|<\pi$. 
    Then, it suffices to confirm that 
    $\lambda \in \mathbb{C} \setminus (-\infty, 0]$,  
    $\lambda \omega \in \mathbb{C} \setminus (-\infty, 0]$ for all $\omega \in K_\alpha$, 
    $\lambda^\alpha e^{\pm i\alpha \pi} \notin [0, \infty)$, and $\lambda^{-\alpha} e^{\pm i\alpha \pi} \notin [0, \infty)$.
    The condition $\lambda \in \mathbb{C} \setminus (-\infty, 0]$ 
    merely says that $|\theta|<\pi$. 
    By noting that 
        \[
            K_\alpha=\left\{\exp\left(i\frac{2k\pi}{\alpha}\right) \, : \, 
            k \in \Z, \, -\left\lfloor \frac{\alpha}{2} \right\rfloor
            \le k \le \left\lfloor \frac{\alpha}{2} \right\rfloor \right\}, 
        \]
    the condition $\lambda \omega \in \mathbb{C} \setminus (-\infty, 0]$ 
    for all $\omega \in K_\alpha$ is satisfied if
        \[
            \theta \in (-\pi, \pi) \cap  \bigcap_{k \in \Z, -\lfloor \alpha /2\rfloor \le k \le \lfloor \alpha /2\rfloor} \left(-\pi-\frac{2k\pi}{\alpha}, 
            \pi-\frac{2k\pi}{\alpha}\right) =\left(-\frac{\alpha-\underline{\alpha}}{\alpha}\pi, \frac{\alpha-\underline{\alpha}}{\alpha}\pi\right).
        \]
    Moreover, the conditions 
    $\lambda^\alpha e^{\pm i\alpha \pi}\notin [0, \infty)$ and $\lambda^{-\alpha} e^{\pm i\alpha \pi} \notin [0, \infty)$ are satisfied if
        \begin{equation}\label{Eq:argument}
            \arg(e^{\mp i\alpha \pi})-2\pi < \alpha \theta < \arg(e^{\mp i\alpha \pi}) 
            \quad \text{and} \quad 
            \arg(e^{\mp i\alpha \pi})-2\pi < -\alpha \theta < \arg(e^{\mp i\alpha \pi}),
        \end{equation}
    where $\arg(e^{\mp i\alpha \pi})$ is taken from $(0, 2\pi)$.
    Because $\arg(e^{-i\alpha \pi})=(\ol{\alpha}-\alpha)\pi$ and 
    $\arg(e^{i\alpha \pi})=(\alpha-\underline{\alpha})\pi$, \eqref{Eq:argument} is rewritten as $-\eta_\alpha < \theta < \eta_\alpha$, where
        \[
            \eta_\alpha
            =\big[ 
            \{2-(\alpha-\underline{\alpha})\} \wedge \{2-(\ol{\alpha}-\alpha)\} \wedge (\ol{\alpha}-\alpha) \wedge (\alpha-\underline{\alpha})
            \big]\frac{\pi}{\alpha}.
        \]
    By noting that $\eta_\alpha=\theta_\alpha/\alpha$ 
    because $\ol{\alpha}-\underline{\alpha}=2$, 
    both sides of \eqref{Eq:analytic-continuation-1} turn out to be holomorphic 
    in the domain $D_\alpha$. 
    Thus, \eqref{Eq:analytic-continuation-1} also holds for all
    $\lambda \in D_\alpha$ by the identity theorem.
\end{proof}

We provide a simple lemma for the proof of Theorem~\ref{Thm:CF-fractional-binomial} below.

\begin{lm}\label{Lem:simple-ineq}
For $a \ge 0$, $b \ge 0$ and $\theta \in \R$, we have
    \[
        |a-be^{i\theta}| \ge \sqrt{a^2+b^2}\left|\sin\frac{\theta}{2}\right|. 
    \]
\end{lm}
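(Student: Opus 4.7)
The plan is to reduce the inequality to an elementary non-negativity statement via the half-angle identity $\cos\theta = 1 - 2\sin^2(\theta/2)$. Expanding the modulus directly gives
\[
|a - be^{i\theta}|^2 = a^2 - 2ab\cos\theta + b^2 = (a-b)^2 + 4ab\sin^2\!\frac{\theta}{2},
\]
so the claimed inequality is equivalent to
\[
(a-b)^2 + 4ab\sin^2\!\frac{\theta}{2} \ge (a^2+b^2)\sin^2\!\frac{\theta}{2}.
\]

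Next, I would use $a^2+b^2 = (a-b)^2 + 2ab$ on the right-hand side to rewrite the desired inequality as
\[
(a-b)^2\left(1 - \sin^2\!\frac{\theta}{2}\right) + 2ab\sin^2\!\frac{\theta}{2} \ge 0,
\]
that is, $(a-b)^2\cos^2(\theta/2) + 2ab\sin^2(\theta/2) \ge 0$. Since $a,b \ge 0$, both terms are non-negative, which closes the argument.

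There is no real obstacle here; the only thing to be careful about is the sign hypothesis $a, b \ge 0$, which is exactly what is needed to guarantee that the $2ab\sin^2(\theta/2)$ term is non-negative (without it, the inequality can fail). The entire proof is essentially a one-line computation after invoking the half-angle identity.
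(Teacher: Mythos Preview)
Your proof is correct and in fact cleaner than the paper's. The paper argues by case analysis: after reducing to $a \le b$ via the symmetry $|a-be^{i\theta}|=|b-ae^{-i\theta}|$, it splits according to the sign of $\cos\theta$. When $\cos\theta \le 0$ the cross term $-2ab\cos\theta$ is non-negative, so $|a-be^{i\theta}|^2 \ge a^2+b^2$ and the bound is immediate. When $\cos\theta > 0$ the paper writes $|a-be^{i\theta}|^2 = (a-b\cos\theta)^2 + 4b^2\sin^2(\theta/2)\cos^2(\theta/2)$, drops the first square, uses $\cos^2(\theta/2) > 1/2$, and then invokes $a \le b$ to get $2b^2 \ge a^2+b^2$. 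Your route via the identity $|a-be^{i\theta}|^2 = (a-b)^2 + 4ab\sin^2(\theta/2)$ and the rewriting $a^2+b^2 = (a-b)^2 + 2ab$ handles all $\theta$ uniformly and never needs the symmetry reduction; the price is that it leans on $ab \ge 0$ explicitly, which is exactly the hypothesis. Either approach is fine, but yours is the more economical one.
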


\begin{proof}\pushQED{\qed}
    Because $|a-be^{i\theta}|=|b-ae^{-i\theta}|$, we may assume $a \le b$. 
    First, suppose $\cos \theta \le 0$. Then, we have 
    \[
        |a-be^{i\theta}|^2=a^2+b^2-2ab\cos \theta \ge a^2+b^2 \ge (a^2+b^2)\sin^2\frac{\theta}{2}.
    \]
    Next, suppose $\cos \theta>0$. Then, it holds that $\cos^2(\theta/2)=(1+\cos \theta)/2>1/2$.
    Therefore, we have 
    \begin{align}
        |a-be^{i\theta}|^2&=(a-b\cos \theta)^2+b^2\sin^2\theta \nn \\ 
        &=(a-b\cos \theta)^2+4b^2\sin^2\frac{\theta}{2}\cos^2\frac{\theta}{2} \nn \\
        &\ge 2b^2\sin^2\frac{\theta}{2} \ge (a^2+b^2)\sin^2\frac{\theta}{2}.\tag*{\qedhere}
    \end{align}
    \end{proof}

A quantitative estimate of $\varphi_{\alpha, x}^{(n)}$ is now stated as follows.

\begin{tm}\label{Thm:CF-fractional-binomial}
    Let $\alpha>0$, $n \in \N$, $x \in (0, 1)$,
    and $\xi \in \R$ satisfy that $|\xi|<\theta_\alpha$.
    Then, we have 
    \begin{align}
        \varphi_{\alpha, x}^{(n)}(\xi)
        &=\frac{1}{Z_{\alpha, x}^{(n)}}\left(1-x+xe^{i\xi/\alpha}\right)^{\alpha n} 
        +\frac{1}{Z_{\alpha, x}^{(n)}}
        \sum_{\omega \in K_\alpha \setminus \{1\}}
        \left(1-x+x\omega e^{i\xi/\alpha}\right)^{\alpha n} \nn \\
        &\quad-\frac{1}{Z_{\alpha, x}^{(n)}}
        \frac{\alpha e^{i\xi}\sin \alpha \pi}{\pi}\int_0^1 t^{\alpha-1}(1-t)^{\alpha n} \nonumber \\
        &\quad\times \Bigg[
        \frac{x^\alpha (1-x)^{\alpha(n+1)}}{\{(t(1-x))^\alpha-x^\alpha e^{i(\xi-\alpha \pi)}\}\{(t(1-x))^\alpha-x^\alpha e^{i(\xi+\alpha \pi)}\}} \nn \\
        &\qquad
        +\frac{x^{\alpha (n+1)}(1-x)^{\alpha}e^{i\xi (n-2)}}
        {\{(1-x)^\alpha e^{-i(\xi+\alpha \pi)}-(tx)^\alpha\}\{(1-x)^\alpha e^{-i(\xi-\alpha\pi)}-(tx)^\alpha\}}
        \Bigg] \, \dd t 
        \label{Eq:CF-fractional-binomial-1}\\
        &=\frac{1}{Z_{\alpha, x}^{(n)}}
        \left(1-x+xe^{i\xi/\alpha}\right)^{\alpha n} + O(e^{-\delta n})
        \label{Eq:CF-fractional-binomial-2}
    \end{align}
    as $n \to \infty$ for some $\delta>0$ that is chosen uniformly 
    on any compact subset of $x \in (0, 1)$
    and any compact subset of $\xi \in (-\theta_\alpha, \theta_\alpha)$. 
    Here, if $\alpha \in \N$, then the last term of \eqref{Eq:CF-fractional-binomial-1} 
    is regarded as $0$. 
\end{tm}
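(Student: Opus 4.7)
The plan is to reduce the theorem to Lemma \ref{Lem:analytic continuation} via the substitution $\lambda = xe^{i\xi/\alpha}/(1-x)$. Starting from
\[
\varphi_{\alpha,x}^{(n)}(\xi) = \frac{\alpha}{Z_{\alpha,x}^{(n)}}\sum_{j=0}^n \binom{\alpha n}{\alpha j}\bigl(xe^{i\xi/\alpha}\bigr)^{\alpha j}(1-x)^{\alpha(n-j)},
\]
I factor $(1-x)^{\alpha n}$ out of the sum to reach $(1-x)^{\alpha n}\cdot \alpha\sum_{j=0}^n \binom{\alpha n}{\alpha j}\lambda^{\alpha j}$. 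Since $\mathrm{Log}\,\lambda = \log(x/(1-x)) + i\xi/\alpha$ and $|\xi/\alpha|<\theta_\alpha/\alpha$, this $\lambda$ lies in the domain $D_\alpha$, so Lemma \ref{Lem:analytic continuation} applies directly (and when $\alpha\in\N$ the integral remainders are absent by part~(1)).

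The next step is purely algebraic. Using $\lambda^\alpha = \exp(\alpha\,\mathrm{Log}\,\lambda) = x^\alpha e^{i\xi}/(1-x)^\alpha$, each summand $(1-x)^{\alpha n}(1+\lambda\omega)^{\alpha n}$ becomes $(1-x+x\omega e^{i\xi/\alpha})^{\alpha n}$, giving the first two terms of \eqref{Eq:CF-fractional-binomial-1}. For the integral residues, the identity
\[
t^\alpha - \lambda^\alpha e^{\mp i\alpha\pi} = \frac{(t(1-x))^\alpha - x^\alpha e^{i(\xi\mp\alpha\pi)}}{(1-x)^\alpha}
\]
converts the denominators of the first integrand, and combining the prefactor $\alpha\lambda^\alpha$ with the remaining $(1-x)^{\alpha n}$ produces the outer coefficient $\alpha x^\alpha(1-x)^{\alpha(n+1)}e^{i\xi}/\pi$. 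The second integral is handled in parallel, with the extra factor $\lambda^{\alpha(n-2)} = (x/(1-x))^{\alpha(n-2)}e^{i\xi(n-2)}$ supplying both the $x^{\alpha(n+1)}(1-x)^\alpha$ powers and the $e^{i\xi(n-2)}$ phase displayed in the statement.

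For \eqref{Eq:CF-fractional-binomial-2}, I must show that the three subordinate terms—the $\omega$-sum and the two integrals—are $O(e^{-\delta n})$ uniformly on compact subsets of $x\in(0,1)$ and $\xi\in(-\theta_\alpha,\theta_\alpha)$. The factor $1/Z_{\alpha,x}^{(n)}$ is harmless by Theorem \ref{Thm:Uniform-bound-normalized-constant}. For $\omega\in K_\alpha\setminus\{1\}$, the elementary expansion
\[
|1-x+x\omega e^{i\xi/\alpha}|^2 = 1-2x(1-x)\bigl(1-\cos(\arg\omega + \xi/\alpha)\bigr)
\]
yields decay of the $\alpha n$-th power, provided $\arg\omega + \xi/\alpha$ stays bounded away from $2\pi\Z$. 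For the two integrals, the prefactors $(1-x)^{\alpha(n+1)}$ and $x^{\alpha(n+1)}$ supply the exponential decay because $x$ lies in a compact subset of $(0,1)$, while the integrands are uniformly bounded: $t^{\alpha-1}$ is integrable on $[0,1]$ and $(1-t)^{\alpha n}\le 1$, and by Lemma \ref{Lem:simple-ineq} with $\theta = \xi\mp\alpha\pi$, each complex denominator is bounded below by $x^\alpha|\sin((\xi\mp\alpha\pi)/2)|$ or $(1-x)^\alpha|\sin((\xi\mp\alpha\pi)/2)|$, respectively.

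The crux of the argument is verifying these non-vanishing conditions simultaneously on $|\xi|<\theta_\alpha$: namely $\sin((\arg\omega+\xi/\alpha)/2)\ne 0$ for every $\omega\in K_\alpha\setminus\{1\}$, and $\sin((\xi\mp\alpha\pi)/2)\ne 0$. The forbidden values are $\xi = -2k\pi + 2m\pi\alpha$ and $\xi = \pm\alpha\pi + 2k\pi$, respectively. This is the main obstacle—not in difficulty, but in that one must check case-by-case (according to whether $\alpha\in\N$, $\alpha/2\in\N$, or $\alpha\in(2\ell,2\ell+2)\setminus\N$) that all such values lie outside $(-\theta_\alpha,\theta_\alpha)$. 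The definition of $\theta_\alpha$ was engineered in Lemma \ref{Lem:analytic continuation} precisely so that this non-resonance holds, so compactness and continuity then upgrade each pointwise non-vanishing to a uniform lower bound, completing the proof.
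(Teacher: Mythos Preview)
Your proposal is correct and follows essentially the same route as the paper: the substitution $\lambda = xe^{i\xi/\alpha}/(1-x)$ into Lemma~\ref{Lem:analytic continuation}, the same algebraic unpacking to reach \eqref{Eq:CF-fractional-binomial-1}, and the same two mechanisms for \eqref{Eq:CF-fractional-binomial-2} (the modulus computation $|1-x+x\omega e^{i\xi/\alpha}|^2 = 1-2x(1-x)(1-\mathrm{Re}(\omega e^{i\xi/\alpha}))$ for the $K_\alpha$-sum, and Lemma~\ref{Lem:simple-ineq} plus the prefactors $(1-x)^{\alpha(n+1)}$, $x^{\alpha(n+1)}$ for the integrals). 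Your explicit check of the non-resonance conditions $\arg\omega + \xi/\alpha \notin 2\pi\Z$ and $\xi\mp\alpha\pi\notin 2\pi\Z$ on $|\xi|<\theta_\alpha$ is a welcome elaboration of what the paper leaves as ``from the assumption of $\xi$.''
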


\begin{proof}
We have 
    \begin{align*}
        \varphi_{\alpha, x}^{(n)}(\xi)
        &=\sum_{j=0}^n e^{i\xi j} \times 
        \frac{\alpha}{Z_{\alpha, x}^{(n)}}
        \binom{\alpha n}{\alpha j}x^{\alpha j}(1-x)^{\alpha(n-j)} \\
        &=\frac{\alpha}{Z_{\alpha, x}^{(n)}}
        \sum_{j=0}^n \binom{\alpha n}{\alpha j}
        \left(\frac{xe^{i\xi/\alpha}}{1-x}\right)^{\alpha j}(1-x)^{\alpha n}. 
    \end{align*}
We put $\lambda=xe^{i\xi/\alpha}/(1-x)$. 
We first consider the case where $\alpha \notin \N$.
From Lemma~\ref{Lem:analytic continuation}, 
    \begin{align*}
        \varphi_{\alpha, x}^{(n)}(\xi) 
        &=\frac{1}{Z_{\alpha, x}^{(n)}}(1-x)^{\alpha n}  
        \alpha \sum_{j=0}^n \binom{\alpha n}{\alpha j} \lambda^{\alpha j} \\
        &=\frac{1}{Z_{\alpha, x}^{(n)}}(1-x)^{\alpha n} \Bigg[ 
        \sum_{\omega \in K_\alpha}(1+\lambda\omega)^{\alpha n}
        -\frac{\alpha \lambda^\alpha \sin \alpha \pi}{\pi}
        \int_0^1 t^{\alpha-1}(1-t)^{\alpha n} \nonumber \\
        &\quad\times \left\{
        \frac{1}{(t^\alpha-\lambda^\alpha e^{-i\alpha \pi})(t^\alpha-\lambda^\alpha e^{i\alpha \pi})}
        +\frac{\lambda^{\alpha (n-2)}}{(\lambda^{-\alpha}e^{-i\alpha \pi}-t^\alpha)(\lambda^{-\alpha}e^{i\alpha \pi}-t^\alpha)}
        \right\} \, \dd t \Bigg] \\
        &=\frac{1}{Z_{\alpha, x}^{(n)}}\left(1-x+xe^{i\xi/\alpha}\right)^{\alpha n}
        +\frac{1}{Z_{\alpha, x}^{(n)}}
        \sum_{\omega \in K_\alpha \setminus \{1\}}
        \left(1-x+x\omega e^{i\xi/\alpha}\right)^{\alpha n} \\
        &\quad-\frac{1}{Z_{\alpha, x}^{(n)}}
        \frac{\alpha x^\alpha (1-x)^{\alpha(n-1)}e^{i\xi}\sin \alpha \pi}{\pi}\int_0^1 t^{\alpha-1}(1-t)^{\alpha n} \nonumber \\
        &\quad\times \Bigg[
        \frac{(1-x)^{2\alpha}}{\{(t(1-x))^\alpha-x^\alpha e^{i(\xi-\alpha \pi)}\}\{(t(1-x))^\alpha-x^\alpha e^{i(\xi+\alpha \pi)}\}} \\
        &\qquad+\frac{x^{\alpha n}(1-x)^{-\alpha(n-2)}e^{i\xi n}}
        {\{(1-x)^\alpha e^{-i\alpha \pi}-(tx)^\alpha e^{i\xi}\}\{(1-x)^\alpha e^{i\alpha \pi}-(tx)^\alpha e^{i\xi}\}}
        \Bigg] \, \dd t,
    \end{align*}
which implies \eqref{Eq:CF-fractional-binomial-1}. 
Next, we prove \eqref{Eq:CF-fractional-binomial-2}. We have
    \begin{align*}
        \left|1-x+x\omega e^{i\xi/\alpha}\right|^2
        &=(1-x)^2+2x(1-x)\mathrm{Re}\left(\omega e^{i\xi/\alpha}\right)+x^2 \\
        &=1-2x(1-x)\left\{ 1-\mathrm{Re}\left(\omega e^{i\xi/\alpha}\right)\right\} <1
    \end{align*}
for $\omega \in K_\alpha \setminus \{1\}$ because $\mathrm{Re}(\omega e^{i\xi/\alpha})<1$
from the assumption of $\xi$. 
Here, we also note that $K_\alpha \setminus \{1\} \neq \varnothing$ only if $\alpha \ge 2$. 
Moreover, from Lemma \ref{Lem:simple-ineq}, we have 
    \[
        |(t(1-x))^\alpha - x^\alpha e^{i(\xi \pm \alpha \pi)}| \ge x^\alpha 
        \left|\sin \frac{\xi \pm \alpha \pi}{2}\right|
    \]
and 
    \[
        |(1-x)^\alpha e^{-i(\xi \pm \alpha \pi)} - (tx)^\alpha| 
        \ge (1-x)^\alpha \left|\sin \frac{\xi \pm \alpha \pi}{2}\right|.
    \]
Note that $|\xi|<\theta_\alpha$ implies $\sin (\xi \pm \alpha \pi)/2 \neq 0$. 
From these estimates and Proposition~\ref{Prop:Order-moment-fractional-binomial}~(1), we obtain \eqref{Eq:CF-fractional-binomial-2}. 
The case for $\alpha \in \N$ is simpler to prove. 
\end{proof}

\subsection{The fractional multinomial distribution}
\label{Subsect:multinomial}

Let $d \ge 2$ be an integer. 
For $\bx=(x_1, x_2, \dots, x_d) \in \R^d$, we write 
    \[
        |\bx|:=x_1+x_2+\cdots+x_d, \qquad 
        \|\bx\| := (x_1^2+x_2^2+\cdots+x_d^2)^{1/2}.
    \]
We define the $(d-1)$-simplex in $\R^d$ by
    \[
        \Delta_{d-1}:=\left\{
        \bx = (x_1, x_2, \dots, x_d) \in \R^d \,
        | \,  x_i \ge 0, i=1, 2, \dots, d, \, |\bx| = 1 \right\}.
    \]
For $n \in \N$, we also set 
    \[
        J_d^{(n)}=\{\bm{j}=(j_1, j_2, \dots, j_d) \mid j_k \in \N_0, 
        \, k=1, 2, \dots, d, \, |\bm{j}|=n\}. 
    \]

According to the preliminary version \cite[Lemma~1]{FR} of the paper \cite{FR-final}, 
we can generalize the neo-classical inequality presented in \cite[Theorem~1.2]{HH}
to the multivariate cases, which is stated as follows.

\begin{tm}[Multivariate neo-classical inequality; see also {\cite[Exercise 3.9]{LCL}}]
\label{Thm:multivariate neo-classical inequality}
    Let $\alpha \in (0, 2)$, $n \in  \N$ and $x_1, x_2, \dots, x_d \ge 0$. 
    Then, we have 
    \begin{align}
        &\alpha^d \sum_{\bm{j} \in J_d^{(n)}}
        \frac{\Gamma(\alpha n+1)}{\Gamma(\alpha j_1+1)\Gamma(\alpha j_2+1)\cdots\Gamma(\alpha j_d+1)}
        x_1^{\alpha j_1}x_2^{\alpha j_2} \cdots x_d^{\alpha j_d} \nn \\
        &\begin{cases}
        \le (x_1+x_2+\cdots+x_d)^{\alpha n} &\text{if }\alpha \in (0, 1], \\
        \ge (x_1+x_2+\cdots+x_d)^{\alpha n} &\text{if }\alpha \in (1, 2). \\
        \end{cases}
        \label{Eq:multivariate neo-classical inequality}
    \end{align}
    The equality holds if and only if $\alpha=1$ or $x_1=x_2=\cdots=x_d=0$. 
\end{tm}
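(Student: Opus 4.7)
I would argue by induction on the number $d\geq 2$ of variables. The base case $d=2$ is the one-dimensional neo-classical inequality: for $\alpha\in(0,1]$ it is precisely Proposition~\ref{Prop:Neo-classical-ineq-HH}, while for $\alpha\in(1,2)$ the reversed inequality
$\alpha\sum_{j=0}^n \binom{\alpha n}{\alpha j}x^{\alpha j}y^{\alpha(n-j)}\geq (x+y)^{\alpha n}$
follows from the sign of the remainder $R_\alpha^{(n)}$ recalled in the proof of Theorem~\ref{Thm:Uniform-bound-normalized-constant}: the bound $R_\alpha^{(n)}(\lambda)\in(1-\alpha,0)$ on $\lambda\in(0,1]$, combined with \eqref{Eq:Remainder-term} and the usual dehomogenization $\lambda=x/y$ (together with the symmetry $x\leftrightarrow y$ in the sum), delivers the claim in the reversed direction.

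\textbf{Inductive step.} Assume \eqref{Eq:multivariate neo-classical inequality} has been established with $d$ replaced by $d-1$ for every $n\in\mathbb{N}$. I would split off the last index via
\[
\frac{\Gamma(\alpha n+1)}{\prod_{k=1}^{d}\Gamma(\alpha j_k+1)}
=\binom{\alpha n}{\alpha j_d}\,\frac{\Gamma(\alpha(n-j_d)+1)}{\prod_{k=1}^{d-1}\Gamma(\alpha j_k+1)},
\]
so that the left-hand side of \eqref{Eq:multivariate neo-classical inequality} becomes
\[
\alpha\sum_{j_d=0}^{n}\binom{\alpha n}{\alpha j_d}x_d^{\alpha j_d}\Bigl[\,\alpha^{d-1}\sum_{\bm{j}'\in J_{d-1}^{(n-j_d)}}\frac{\Gamma(\alpha(n-j_d)+1)}{\prod_{k=1}^{d-1}\Gamma(\alpha j_k+1)}\prod_{k=1}^{d-1}x_k^{\alpha j_k}\Bigr].
\]
Because the weights $\binom{\alpha n}{\alpha j_d}x_d^{\alpha j_d}$ are nonnegative, applying the inductive hypothesis to the bracket (with $n$ replaced by $n-j_d$ and variables $x_1,\dots,x_{d-1}$) preserves the direction of the inequality in both regimes and bounds the bracket by $(x_1+\cdots+x_{d-1})^{\alpha(n-j_d)}$. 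A second application of the base case with $x=x_d$ and $y=x_1+\cdots+x_{d-1}$ then yields the comparison with $(x_1+\cdots+x_d)^{\alpha n}$.

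\textbf{Equality.} Equality in \eqref{Eq:multivariate neo-classical inequality} forces equality in both uses of the one-dimensional inequality. The base case characterization (Proposition~\ref{Prop:Neo-classical-ineq-HH} for $\alpha\in(0,1]$, and the strict inequality coming from $R_\alpha^{(n)}<0$ on $(0,1]$ for $\alpha\in(1,2)$) allows equality only when $\alpha=1$ or the two arguments both vanish. Unwinding this along the induction forces $x_1=\cdots=x_d=0$ whenever $\alpha\neq 1$; the converse is immediate.

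\textbf{Main obstacle.} The only genuinely non-routine ingredient is securing the reversed one-dimensional inequality together with its strictness for $\alpha\in(1,2)$, since Proposition~\ref{Prop:Neo-classical-ineq-HH} does not cover this range verbatim. Both the inequality and its strictness are immediate consequences of the sign information for $R_\alpha^{(n)}$ obtained in \cite{HH}, but they require a brief appeal to the explicit remainder formula rather than a black-box use of the neo-classical inequality. Once this is in place, the inductive argument above is entirely elementary.
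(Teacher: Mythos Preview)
Your argument is correct and matches the paper's proof essentially line for line: induction on $d$, splitting off the last index, applying the inductive hypothesis to the inner $(d-1)$-variable sum, and then the two-variable base case once more. You are more explicit than the paper about the reversed inequality for $\alpha\in(1,2)$ and about the equality characterization, but the underlying route is identical.
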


\begin{proof}
    The proof is by induction on $d \ge 2$. 
    For $d=2$, \eqref{Eq:multivariate neo-classical inequality} is nothing but the neo-classical inequality and its converse. 
    Next, we assume that \eqref{Eq:multivariate neo-classical inequality} holds for some $d >2$. 
    Then for $\alpha \in (0, 1]$, we have 
    \begin{align*}
        &\alpha^{d+1} \sum_{\bm{j} \in J_{d+1}^{(n)}}
        \frac{\Gamma(\alpha n+1)}{\Gamma(\alpha j_1+1)\Gamma(\alpha j_2+1)\cdots\Gamma(\alpha j_{d+1}+1)}
        x_1^{\alpha j_1}x_2^{\alpha j_2} \cdots x_{d+1}^{\alpha j_{d+1}} \nn \\
        &=\alpha
        \sum_{j_{d+1}=0}^n \frac{\Gamma(\alpha n+1)}{\Gamma(\alpha(n-j_{d+1})+1)\Gamma(\alpha j_{d+1}+1)} x_{d+1}^{\alpha j_{d+1}} \nn \\ &\quad\times \alpha^d
        \sum_{(j_1, j_2, \dots, j_d) \in J_d^{(n-j_{d+1})}}
        \frac{\Gamma(\alpha (n-j_{d+1})+1)}{\Gamma(\alpha j_1+1)\Gamma(\alpha j_2+1)\cdots\Gamma(\alpha j_{d}+1)}
        x_1^{\alpha j_1}x_2^{\alpha j_2} \cdots x_{d}^{\alpha j_{d}} \nn \\
        &\le \alpha
        \sum_{j_{d+1}=0}^n \binom{\alpha n}{\alpha j_{d+1}} (x_1+x_2+\cdots+x_d)^{\alpha(n-j_{d+1})}x_{d+1}^{\alpha j_{d+1}} \nn \\
        &\le (x_1+x_2+\cdots+x_{d+1})^{\alpha n}.
    \end{align*}
    The proof for $\alpha \in (1, 2)$ is similar to that 
    for $\alpha \in (0, 1]$. 
\end{proof}

Needless to say, if $\alpha=1$, the \eqref{Eq:multivariate neo-classical inequality}
reads as usual multinomial theorem. 
As with Definition~\ref{Def:fractional-binomial-distribution}, 
this multivariate neo-classical inequality allows us to define 
the fractional analogue of the multinomial distribution. 

\begin{df}[Fractional multinomial distribution]
\label{Def:fractional multinomial distribution}
    Let $\alpha > 0$, $n \in \N$ and $\bx=(x_1, x_2, \dots, x_d) \in \Delta_{d-1}$. 
    Then, a probability measure $\mu_{\alpha, \bx}^{(n)}$ 
    is called the $\alpha$-fractional multinomial distribution if 
    \begin{equation}\label{Eq:fractional multinomial distribution}
        \mu_{\alpha, \bx}^{(n)}(\dd \bm{z})
        :=\frac{\alpha^d}{Z_{\alpha, \bx}^{(n)}}
        \sum_{\bm{j} \in J_d^{(n)}}
        \frac{\Gamma(\alpha n+1)}{\Gamma(\alpha j_1+1)\Gamma(\alpha j_2+1)\cdots\Gamma(\alpha j_d+1)}
        x_1^{\alpha j_1}x_2^{\alpha j_2} \cdots x_d^{\alpha j_d}
        \delta_{\bm{j}}(\dd \bm{z}),
    \end{equation}
    where $Z_{\alpha, \bm{x}}^{(n)}$ is the normalized constant given by 
    \[
        Z_{\alpha, \bx}^{(n)}=\alpha^d \sum_{\bm{j} \in J_d^{(n)}}
        \frac{\Gamma(\alpha n+1)}{\Gamma(\alpha j_1+1)\Gamma(\alpha j_2+1)\cdots\Gamma(\alpha j_d+1)}
        x_1^{\alpha j_1}x_2^{\alpha j_2} \cdots x_d^{\alpha j_d} \, (>0). 
    \]
\end{df}

\noindent
If $\alpha=1$, then \eqref{Eq:fractional multinomial distribution} becomes the 
usual definition of the multinomial distribution. 
However, we have no explicit formula 
for the remainder term $R^{(n)}_\alpha(x_1, x_2, \dots, x_d)$ satisfying 
    \begin{align*}
        &\alpha^d \sum_{\bm{j} \in J_d^{(n)}}
        \frac{\Gamma(\alpha n+1)}{\Gamma(\alpha j_1+1)\Gamma(\alpha j_2+1)\cdots\Gamma(\alpha j_d+1)}
        x_1^{\alpha j_1}x_2^{\alpha j_2} \cdots x_d^{\alpha j_d} \\
        &=(x_1+x_2+\cdots+x_d)^{\alpha n}
        +R^{(n)}_\alpha(x_1, x_2, \dots, x_d)
    \end{align*}
for $d>2$. 
Obtaining such a formula explicitly might require a multivariate version of the fractional Taylor series, which, although it appears rather difficult, may nevertheless be worth further investigation.

\section{{\bf Limit theorems for the fractional binomial distribution}}
\label{Sect:limit theorems}

\subsection{The weak law of large numbers}
\label{Subsect:WLLN}

First, we prove the weak law of large numbers 
for the fractional binomial distribution $\mu_{\alpha, x}^{(n)}$. 

\begin{tm}[Weak law of large numbers for $\mu_{\alpha, x}^{(n)}$]
\label{Thm:WLLN}
Let $\alpha>0$, $n \in \N$, and $x \in [0, 1]$. 
Let $S_{\alpha, x}^{(n)}$ denote the random variable whose law is $\mu_{\alpha, x}^{(n)}$. 
Then, for every $\ve>0$ and a compact subset $K$ of $(0, \infty)$, we have  
    \begin{equation}\label{Eq:WLLN}
        \lim_{n \to \infty}
        \sup_{\alpha \in K} \sup_{x \in [0, 1]} \mathbb{P}
        \left(\left|\frac{1}{n}S_{\alpha, x}^{(n)}-x\right| \ge \ve\right)=0. 
    \end{equation}
\end{tm}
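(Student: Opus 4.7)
The plan is to proceed via a second-moment (Chebyshev) estimate, centered at $x$ rather than at the mean, so that the estimates of Section~\ref{Subsect:uniform estimates} apply directly. For any $\alpha>0$, $n\in\N$, $x\in[0,1]$ and $\varepsilon>0$, Markov's inequality gives
\begin{equation*}
\mathbb{P}\!\left(\left|\frac{S_{\alpha,x}^{(n)}}{n}-x\right|\ge \varepsilon\right)
\le \frac{1}{\varepsilon^{2}}\sum_{j=0}^{n}\left(\frac{j}{n}-x\right)^{2}\mu_{\alpha,x}^{(n)}(\{j\}).
\end{equation*}
Because $\mu_{\alpha,0}^{(n)}=\delta_{0}$ and $\mu_{\alpha,1}^{(n)}=\delta_{1}$, the endpoints contribute probability $0$, so it suffices to handle $x\in(0,1)$.

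Next I would unpack the right-hand side using Definition~\ref{Def:fractional-binomial-distribution}:
\begin{equation*}
\sum_{j=0}^{n}\left(\frac{j}{n}-x\right)^{2}\mu_{\alpha,x}^{(n)}(\{j\})
=\frac{1}{Z_{\alpha,x}^{(n)}}\cdot \alpha\sum_{j=0}^{n}\binom{\alpha n}{\alpha j}x^{\alpha j}(1-x)^{\alpha(n-j)}\left(\frac{j}{n}-x\right)^{2}.
\end{equation*}
Corollary~\ref{Cor:Moment-estimate} with $m=2$ shows that the sum on the right is $O(1/n)$ uniformly for $\alpha\in K$ and $x\in[0,1]$. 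Meanwhile, the lower bound from Theorem~\ref{Thm:Uniform-bound-normalized-constant} yields
\begin{equation*}
\frac{1}{Z_{\alpha,x}^{(n)}}\le \max\!\left\{\frac{1}{\alpha\wedge 1},\,\alpha^{\alpha^{2}-1}\right\},
\end{equation*}
which is bounded uniformly in $x\in[0,1]$, $n\in\N$ and $\alpha$ ranging over the compact set $K$. Combining these two facts gives a uniform estimate of the form
\begin{equation*}
\sup_{\alpha\in K}\sup_{x\in[0,1]}\sum_{j=0}^{n}\left(\frac{j}{n}-x\right)^{2}\mu_{\alpha,x}^{(n)}(\{j\})
=O\!\left(\frac{1}{n}\right)
\end{equation*}
as $n\to\infty$.

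Feeding this back into the Markov bound, we obtain
\begin{equation*}
\sup_{\alpha\in K}\sup_{x\in[0,1]}\mathbb{P}\!\left(\left|\frac{S_{\alpha,x}^{(n)}}{n}-x\right|\ge \varepsilon\right)\le \frac{C}{\varepsilon^{2}n}\longrightarrow 0
\end{equation*}
as $n\to\infty$, for some constant $C>0$ depending only on $K$, which proves \eqref{Eq:WLLN}. There is no real obstacle here beyond bookkeeping; all the heavy lifting was already done in Theorem~\ref{Thm:Uniform-bound-normalized-constant} (uniform lower bound on the normalizer) and Corollary~\ref{Cor:Moment-estimate} (uniform $O(1/n)$ decay of the centered second moment). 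The only point worth being careful about is to center at $x$ rather than at $\E[S_{\alpha,x}^{(n)}]$, since by Corollary~\ref{Cor:Mean-variance-fractional-binomial} the mean of $\mu_{\alpha,x}^{(n)}$ is only asymptotically $nx$; centering at $x$ sidesteps any discussion of the small correction $O(e^{-\delta n})$ and lets us invoke the uniform moment bound directly.
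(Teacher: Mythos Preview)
Your proof is correct and follows essentially the same approach as the paper: apply Chebyshev's inequality centered at $x$, split the second moment into the normalizing factor $1/Z_{\alpha,x}^{(n)}$ (bounded uniformly via Theorem~\ref{Thm:Uniform-bound-normalized-constant}) and the unnormalized centered second moment (which is $O(1/n)$ uniformly by Corollary~\ref{Cor:Moment-estimate}). Your additional remarks on the endpoints $x\in\{0,1\}$ and on why centering at $x$ is preferable are accurate but not strictly needed, since the paper's estimates already cover all $x\in[0,1]$ by continuity.
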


\begin{proof}
    It follows from the Chebyshev inequality, Theorem~\ref{Thm:Uniform-bound-normalized-constant}, and Corollary~\ref{Cor:Moment-estimate} that 
    \begin{align*}
        &\mathbb{P}
        \left(\left|\frac{1}{n}S_{\alpha, x}^{(n)}-x\right| \ge \ve\right) \\
        &\le \frac{1}{\ve^2}\mathbb{E}\left[ \left(\frac{1}{n}S_{\alpha, x}^{(n)}-x\right)^2\right] \\
        &= \frac{1}{\ve^2}
        \sum_{j=0}^n \frac{\alpha}{Z_{\alpha, x}^{(n)}}
        \binom{\alpha n}{\alpha j}x^{\alpha j}(1-x)^{\alpha(n-j)}
        \left(\frac{j}{n}-x\right)^2 \\
        &\le \frac{1}{\ve^2} \left(\sup_{n \in \N}\sup_{\alpha \in K}\max_{x \in [0, 1]}\frac{1}{Z_{\alpha, x}^{(n)}}\right) \sup_{\alpha \in K}\max_{x \in [0, 1]}
        \left|\alpha \sum_{j=0}^n \binom{\alpha n}{\alpha j}x^{\alpha j}(1-x)^{\alpha(n-j)}
        \left(\frac{j}{n}-x\right)^2\right| \nn \\
        &=O\left(\frac{1}{n}\right)
    \end{align*}
    as $n \to \infty$, which implies \eqref{Eq:WLLN}.
\end{proof}

\subsection{The central limit theorem}
\label{Subsect:CLT}

We prove the central limit theorem for the fractional binomial distribution 
$\mu_{\alpha, x}^{(n)}$, which is stated as follows.

\begin{tm}[Central limit theorem for $\mu_{\alpha, x}^{(n)}$]
\label{Thm:CLT}
    Let $\alpha>0$, $n \in \N$, and $x \in (0, 1)$.
    Let $\widetilde{\mu}_{\alpha, x}^{(n)}$ denote the law of the random variable 
        \[
            \widetilde{S}_{\alpha, x}^{(n)}
            :=\frac{S_{\alpha, x}^{(n)} - \mathbb{E}[S_{\alpha, x}^{(n)}]}{\sqrt{\mathrm{Var}(S_{\alpha, x}^{(n)})}}.
        \] 
    Then, $\widetilde{\mu}_{\alpha, x}^{(n)}$
    converges weakly to the standard normal distribution $N(0, 1)$
    as $n \to \infty$. 
\end{tm}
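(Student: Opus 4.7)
The plan is to verify the convergence by Lévy's continuity theorem, reducing everything to the pointwise convergence of characteristic functions. Write $m_n = \mathbb{E}[S_{\alpha, x}^{(n)}]$ and $\sigma_n^2 = \mathrm{Var}(S_{\alpha, x}^{(n)})$. By Corollary~\ref{Cor:Mean-variance-fractional-binomial}, $m_n = nx + O(e^{-\delta n})$ and $\sigma_n^2 = nx(1-x)/\alpha + O(e^{-\delta n})$, so in particular $\sigma_n \sim \sqrt{nx(1-x)/\alpha}$. The characteristic function of $\widetilde{S}_{\alpha, x}^{(n)}$ equals $\widetilde{\varphi}_{\alpha, x}^{(n)}(\xi) = e^{-i\xi m_n/\sigma_n}\,\varphi_{\alpha, x}^{(n)}(\xi/\sigma_n)$, and the goal is to show $\widetilde{\varphi}_{\alpha, x}^{(n)}(\xi) \to e^{-\xi^2/2}$ for every fixed $\xi \in \mathbb{R}$.

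For each fixed $\xi$ and $n$ large, $|\xi/\sigma_n| < \theta_\alpha$, so Theorem~\ref{Thm:CF-fractional-binomial} applies and yields
\[
\varphi_{\alpha, x}^{(n)}(\xi/\sigma_n) = \frac{1}{Z_{\alpha, x}^{(n)}}\bigl(1-x+xe^{i\xi/(\alpha\sigma_n)}\bigr)^{\alpha n} + O(e^{-\delta n}).
\]
Combined with the bound $1/Z_{\alpha, x}^{(n)} = 1 + O(e^{-\delta n})$ from \eqref{Eq:Order-normalized-constant}, the exponentially small error is negligible, and the analysis reduces to the leading term $\bigl(1-x+xe^{i\xi/(\alpha\sigma_n)}\bigr)^{\alpha n}$.

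Next I would Taylor expand $\log(1-x+xe^{iy})$ at $y=0$. A direct computation gives
\[
\log(1-x+xe^{iy}) = ixy - \tfrac{1}{2}x(1-x)y^2 + O(y^3)
\]
as $y \to 0$. Setting $y = \xi/(\alpha\sigma_n)$ and multiplying by $\alpha n$ yields
\[
\alpha n \log\bigl(1-x+xe^{i\xi/(\alpha\sigma_n)}\bigr) = \frac{inx\xi}{\sigma_n} - \frac{nx(1-x)\xi^2}{2\alpha\sigma_n^2} + O(n^{-1/2}).
\]
Since $nx(1-x)/(\alpha\sigma_n^2) \to 1$ by Corollary~\ref{Cor:Mean-variance-fractional-binomial}, the second summand converges to $-\xi^2/2$. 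Multiplying by $e^{-i\xi m_n/\sigma_n}$ produces the phase factor $e^{i\xi(nx-m_n)/\sigma_n}$, which tends to $1$ because $(nx-m_n)/\sigma_n = O(e^{-\delta n}/\sqrt{n}) \to 0$. Assembling these pieces gives $\widetilde{\varphi}_{\alpha, x}^{(n)}(\xi) \to e^{-\xi^2/2}$.

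The bookkeeping is routine; the only subtlety I anticipate is ensuring the branch of $z^{\alpha n}$ used in Theorem~\ref{Thm:CF-fractional-binomial} matches the principal branch of the Taylor expansion, which is automatic because $1-x+xe^{i\xi/(\alpha\sigma_n)} \to 1$ for each fixed $\xi$ once $n$ is large, so the principal logarithm is continuous there. Finally, Lévy's continuity theorem transfers this pointwise convergence of characteristic functions to the desired weak convergence of $\widetilde{\mu}_{\alpha, x}^{(n)}$ to $N(0,1)$.
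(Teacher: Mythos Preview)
Your proposal is correct and follows essentially the same strategy as the paper: apply L\'evy's continuity theorem after invoking Theorem~\ref{Thm:CF-fractional-binomial} at the rescaled argument $\xi/\sigma_n$ and verifying that the leading factor $(1-x+xe^{i\xi/(\alpha\sigma_n)})^{\alpha n}$, adjusted by the centering phase, tends to $e^{-\xi^2/2}$. The only cosmetic differences are that the paper uses the explicit form \eqref{Eq:CF-fractional-binomial-1} and bounds the $K_\alpha\setminus\{1\}$ and integral remainders directly (whereas you invoke the packaged estimate \eqref{Eq:CF-fractional-binomial-2}, which is legitimate since $\xi/\sigma_n$ eventually lies in any fixed compact neighbourhood of $0$), and the paper absorbs the centering phase into the base before Taylor expanding rather than expanding $\log(1-x+xe^{iy})$ as you do.
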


\begin{proof}
    We write $m_n=\mathbb{E}[S_{\alpha, x}^{(n)}]$ and $v_n=\mathrm{Var}(S_{\alpha, x}^{(n)})$. 
    Let $\widetilde{\varphi}_{\alpha, x}^{(n)}$ denote the characteristic function of $\widetilde{\mu}_{\alpha, x}^{(n)}$, and 
    let $\xi \in \R$. 
    We note that $v_n$ diverges as $n \to \infty$ from Corollary~\ref{Cor:Mean-variance-fractional-binomial}. 
    Hence, it holds that $|\xi/\sqrt{v_n}| < \theta_\alpha$ for sufficiently large $n$, and
    we consider only such $n$ in the following. 
    By applying Theorem~\ref{Thm:CF-fractional-binomial}, we have 
    \begin{align}
        &\widetilde{\varphi}_{\alpha, x}^{(n)}(\xi)  \nn \\
        &=\exp\left(-\frac{i\xi m_n}{\sqrt{v_n}}\right)
        \varphi_{\alpha, x}^{(n)}\left(\frac{\xi}{\sqrt{v_n}}\right) \nn \\
        &=\exp\left(-\frac{i\xi m_n}{\sqrt{v_n}}\right)
        \Bigg[\frac{1}{Z_{\alpha, x}^{(n)}}
        \left(1-x+x\exp\left(\frac{i\xi}{\alpha\sqrt{v_n}}\right)\right)^{\alpha n} \nn \\
        &\quad+\frac{1}{Z_{\alpha, x}^{(n)}}
        \sum_{\omega \in K_\alpha \setminus \{1\}}
        \left(1-x+x\omega\exp\left(\frac{i\xi}{\alpha\sqrt{v_n}}\right)\right)^{\alpha n} \nn \\
        &\quad
        -\frac{1}{Z_{\alpha, x}^{(n)}}
        \frac{\alpha \sin \alpha \pi}{\pi}
        \exp\left(\frac{i\xi}{\sqrt{v_n}}\right)
        \int_0^1 t^{\alpha-1}(1-t)^{\alpha n} \nonumber \\
        &\quad\times \Bigg\{\frac{x^\alpha (1-x)^{\alpha(n+1)}}{\{(t(1-x))^\alpha-x^\alpha e^{i(\xi/\sqrt{v_n}-\alpha \pi)}\}\{(t(1-x))^\alpha-x^\alpha e^{i(\xi/\sqrt{v_n}+\alpha \pi)}\}} \nn \\
        &\qquad+\frac{x^{\alpha (n+1)}(1-x)^{\alpha}e^{i\xi (n-2)/\sqrt{v_n}}}
        {\{(1-x)^\alpha e^{-i(\xi/\sqrt{v_n}+\alpha \pi)}-(tx)^\alpha\}
        \{(1-x)^\alpha e^{-i(\xi/\sqrt{v_n}-\alpha \pi)}-(tx)^\alpha\}} \Bigg\} \, \dd t\Bigg].
    \label{Eq:calculation-CF-1}
    \end{align}
    From Corollary \ref{Cor:Mean-variance-fractional-binomial}, we have
    \[
        \frac{m_n}{n}=x+O(e^{-\delta n}), 
        \qquad \sqrt{v_n}=\sqrt{\frac{1}{\alpha}nx(1-x)}+O(e^{-\delta n})=\Omega(\sqrt{n})
    \]
    as $n \to \infty$ for some $\delta>0$. Because 
        \[
        e^{i\theta}=1+i\theta-\frac{\theta^2}{2}+O(\theta^3)
        \]
    as $\theta \to 0$, we obtain 
    \begin{align}
        &\exp\left(-\frac{i\xi m_n}{\sqrt{v_n}}\right)
        \left(
        1-x+x\exp\left(\frac{i\xi}{\alpha\sqrt{v_n}}\right)
        \right)^{\alpha n}  \nn \\
        &=\left\{ (1-x)\exp\left(-\frac{i\xi m_n}{\alpha n \sqrt{v_n}}\right)
        +x \exp\left(\frac{i\xi }{\alpha \sqrt{v_n}}
        \left(1-\frac{m_n}{n}\right)\right)
        \right\}^{\alpha n} \nn \\
        &=\Bigg[ (1-x) \left\{1-\frac{i\xi m_n}{\alpha n \sqrt{v_n}}
        -\frac{\xi^2 m_n^2}{2(\alpha n \sqrt{v_n})^2}+O(n^{-3/2})\right\} \nn \\
        &\quad+x\left\{
        1+\frac{i\xi}{\alpha\sqrt{v_n}}\left(1-\frac{m_n}{n}\right)
        -\frac{\xi^2}{2(\alpha \sqrt{v_n})^2}\left(1-\frac{m_n}{n}\right)^2
        +O(n^{-3/2})\right\}\Bigg]^{\alpha n} \nn\\
        &= \Bigg[ (1-x) \left\{1-\frac{i\xi x}{\sqrt{\alpha n x(1-x)}}
        -\frac{\xi^2 x^2}{2 \alpha n x(1-x)}+O(n^{-3/2})\right\} \nn \\
        &\quad+x\left\{
        1+\frac{i\xi(1-x)}{\sqrt{\alpha n x(1-x)}} 
        -\frac{\xi^2(1-x)^2}{2\alpha n x(1-x)}
        +O(n^{-3/2})\right\}\Bigg]^{\alpha n} \nn\\
        &=\left\{ 
        1-\frac{\xi^2}{2\alpha n}+O(n^{-3/2})\right\}^{\alpha n}
        \to \exp\left(-\frac{\xi^2}{2}\right)
        \label{Eq:calculation-CF-2}
    \end{align}
    as $n \to \infty$. 
    For each $\omega \in K_\alpha \setminus \{1\}$, we also have
    \begin{align}
        &\left|\exp\left(-\frac{i\xi m_n}{\sqrt{v_n}}\right)
        \left(1-x+x\omega\exp\left(\frac{i\xi}{\alpha\sqrt{v_n}}\right)\right)^{\alpha n}\right| 
        =\left|1-x+x\omega+O(n^{-1/2})\right|^{\alpha n} \to 0
        \label{Eq:calculation-CF-3}
    \end{align}
    as $n \to \infty$, by noting \eqref{Eq:Estimate-1-x+xomega}. 
    To estimate the last term of \eqref{Eq:calculation-CF-1},
    we may assume $\alpha \notin \N$; otherwise this term vanishes. 
    Because it follows from Lemma \ref{Lem:simple-ineq} that 
    \begin{align*}
        &|(t(1-x))^\alpha-x^\alpha e^{i(\xi/\sqrt{v_n} \mp \alpha\pi)}| 
        \ge x^{\alpha}\left|\sin\frac{1}{2}\left(\frac{\xi}{\sqrt{v_n}} \mp \alpha \pi \right)\right| 
        \to x^{\alpha}\left|\sin \frac{\alpha \pi}{2}\right|>0
    \end{align*}
    and 
    \begin{align*}
        &|(1-x)^\alpha e^{-i(\xi/\sqrt{v_n} \pm \alpha\pi)} -(tx)^\alpha |
        \ge (1-x)^{\alpha}\left|\sin\frac{1}{2}\left(\frac{\xi}{\sqrt{v_n}} \pm \alpha \pi\right)  \right|
        \to (1-x)^{\alpha}\left|\sin \frac{\alpha \pi}{2}\right|>0
    \end{align*}
    as $n \to \infty$, Proposition~\ref{Prop:Order-moment-fractional-binomial}~(1) implies that 
    the third term of the right-hand side of \eqref{Eq:calculation-CF-1} converges to zero as $n \to \infty$.
    Therefore, \eqref{Eq:calculation-CF-1}, \eqref{Eq:calculation-CF-2},
    and \eqref{Eq:calculation-CF-3} lead to 
    \[
        \lim_{n \to \infty}\widetilde{\varphi}_{\alpha, x}^{(n)}(\xi)
        =\exp\left(-\frac{\xi^2}{2}\right), \qquad \xi \in \R.
    \]
    Because the right-hand side of the above is nothing but the
    characteristic function of the normal distribution $N(0, 1)$, 
    L\'evy's continuity theorem (cf.~\cite[Theorem~15.23]{Klenke}) 
    concludes the desired weak convergence. 
\end{proof}
By combining Corollary~\ref{Cor:Mean-variance-fractional-binomial} and Theorem~\ref{Thm:CLT}, we also have the following.
\begin{co}
In the notation of Theorem~\ref{Thm:CLT}, the law of $\frac{S_{\alpha, x}^{(n)} - nx}{\sqrt{nx(1-x)/\alpha}}$ converges weakly to $N(0, 1)$ as $n \to \infty$.
\end{co}

\subsection{The law of small numbers}
\label{Subsect:LSN}

Recall that the Mittag-Leffler function of parameter $\alpha>0$ is defined by 
    \[
        E_\alpha(z)=\sum_{j=0}^\infty
        \frac{z^j}{\Gamma(\alpha j+1)}, \qquad z \in \mathbb{C},
    \]
which is a special function appearing in the context of fractional calculus. 
In terms of this function, we introduce a fractional analogue 
of the Poisson distribution. 

\begin{df}[Fractional Poisson distribution]
\label{Def:fractional Poisson distribution}
    Let $\alpha >0$ and $\lambda>0$.  
    The $\alpha$-fractional Poisson distribution with parameter $\lambda$ is
    a probability measure $\nu_{\alpha, \lambda}$ 
    defined as 
    \[
        \nu_{\alpha, \lambda}(\dd z)
        :=\frac{1}{E_\alpha(\lambda^\alpha)} \sum_{j=0}^\infty \frac{\lambda^{\alpha j}}{\Gamma(\alpha j+1)}
        \delta_j(\dd z).
    \]
\end{df}

The $1$-fractional Poisson distribution coincides with 
the Poisson distribution. 
Some basic properties of this fractional Poisson distribution 
will be summarized in the subsequent section.
We note that $\nu_{\alpha, \lambda}$ is a special case of 
generalized fractional Poisson distributions defined via the generalized Mittag-Leffler function 
$E_{\alpha, \beta}(z)$ given by \eqref{Eq:generalized-ML-function} below.
See \cite{Herrmann, CO} for some properties as well as its definition. 
We also refer to \cite{Laskin} for the study of non-Markovian fractional Poisson processes 
based on Riemann--Liouville type fractional calculus, 
and \cite{Lee2} for some generalizations of Bernoulli processes having long-range dependence
and their convergences to fractional Poisson processes, 
both of which are different concepts from ours. 
We also mention that the Mittag-Leffler function has already been used to 
introduce some probability distributions in different situations. 
See \cite{Pillai, PJ} for the notion of the {\it Mittag-Leffler distribution},
which is, however, far from the Poisson distribution. 

Here, we prove that $\nu_{\alpha, \lambda}$ is captured 
through the law of small numbers for the fractional binomial distribution.

\begin{tm}[Law of small numbers for $\mu_{\alpha, x}^{(n)}$]
\label{Thm:LSN}
    Let $\alpha>0$ and $\lambda>0$. 
    Suppose that two sequences $\{\alpha_n\}_{n=1}^\infty \subset (0, \infty)$ and 
    $\{x_n\}_{n=1}^\infty \subset (0, 1)$ satisfy that 
    $\alpha_n \to \alpha$ and $n x_n \to \lambda/\alpha$ as $n \to \infty$. 
    Then, $\mu_{\alpha_n, x_n}^{(n)}$ converges weakly to $\nu_{\alpha, \lambda}$ as $n \to \infty$. 
\end{tm}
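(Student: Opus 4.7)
The plan is to establish pointwise convergence of the probability mass functions on $\N_0$; Scheffé's lemma then upgrades this automatically to convergence in total variation and therefore to the desired weak convergence. Writing $a_j^{(n)} := \alpha_n\binom{\alpha_n n}{\alpha_n j}x_n^{\alpha_n j}(1-x_n)^{\alpha_n(n-j)}$ so that $\mu_{\alpha_n,x_n}^{(n)}(\{j\}) = a_j^{(n)}/Z_{\alpha_n,x_n}^{(n)}$ with $Z_{\alpha_n,x_n}^{(n)} = \sum_{j=0}^n a_j^{(n)}$, the task decomposes into determining the pointwise limit of $a_j^{(n)}$ for each $j$ and the limit of the total mass $Z_{\alpha_n,x_n}^{(n)}$.

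For fixed $j \in \N_0$, the standard asymptotic $\Gamma(z+a)/\Gamma(z) \sim z^a$ as $z \to \infty$ (applied with $z = \alpha_n(n-j)+1$, $a = \alpha_n j$) gives $\binom{\alpha_n n}{\alpha_n j} \sim (\alpha_n n)^{\alpha_n j}/\Gamma(\alpha_n j+1)$. Multiplied by $x_n^{\alpha_n j}$ this becomes $(\alpha_n n x_n)^{\alpha_n j}/\Gamma(\alpha_n j+1) \to \lambda^{\alpha j}/\Gamma(\alpha j+1)$, while $(1-x_n)^{\alpha_n(n-j)} = (1-x_n)^{\alpha_n n}(1-x_n)^{-\alpha_n j} \to e^{-\lambda}$. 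Combining these,
\[
a_j^{(n)} \longrightarrow a_j := \alpha e^{-\lambda}\frac{\lambda^{\alpha j}}{\Gamma(\alpha j+1)}, \qquad \sum_{j=0}^\infty a_j = \alpha e^{-\lambda}E_\alpha(\lambda^\alpha).
\]

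To swap limit and summation in $Z_{\alpha_n, x_n}^{(n)}$ I would produce a summable majorant uniform in $n$ and apply dominated convergence, obtaining $Z_{\alpha_n, x_n}^{(n)} \to \alpha e^{-\lambda}E_\alpha(\lambda^\alpha)$; dividing then yields $\mu_{\alpha_n, x_n}^{(n)}(\{j\}) \to \nu_{\alpha, \lambda}(\{j\})$ pointwise, as required. Log-$\Gamma$ convexity together with the standard bound $\psi(t) \le \log t$ on $(0, \infty)$ for the digamma function $\psi$ gives $\Gamma(\alpha_n n+1)/\Gamma(\alpha_n(n-j)+1) \le (\alpha_n n + 1)^{\alpha_n j}$ uniformly for $0 \le j \le n$, hence
\[
a_j^{(n)} \le \alpha_n \frac{((\alpha_n n + 1) x_n)^{\alpha_n j}}{\Gamma(\alpha_n j+1)} \le C\frac{(2\lambda)^{\alpha_n j}}{\Gamma(\alpha_n j+1)}
\]
for all $n$ sufficiently large; the right-hand side is dominated uniformly in $n$ by a convergent Mittag-Leffler series (taking $\alpha_n$ in a small compact neighborhood of $\alpha$), furnishing the desired majorant.

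The main technical obstacle is obtaining this uniform domination in the regime $x_n \to 0$: the exponential remainders developed in Section~\ref{Sect:fractional-binomial-distribution} (e.g.\ Proposition~\ref{Prop:Order-moment-fractional-binomial}) degrade when $x$ approaches $0$, so those estimates cannot be cited directly. Should the $\Gamma$-ratio argument above prove cumbersome, a robust fallback is to combine Fatou's lemma (giving $\liminf_n Z_{\alpha_n, x_n}^{(n)} \ge \alpha e^{-\lambda} E_\alpha(\lambda^\alpha)$) with a tightness argument: applying Theorem~\ref{Thm:Moment-fractional-binomial-distribution} with $c = 0$, $m = 1$ and controlling the $K_{\alpha_n}$-sum via $|1-x_n+x_n\omega|^{\alpha_n n} \to e^{\lambda(\mathrm{Re}\,\omega-1)} \le 1$ and the residual integrals via Lemma~\ref{Lem:Estimates-integral-terms} yields $\sum_j j\,a_j^{(n)} = O(1)$, after which Markov's inequality truncates the tails and delivers the matching $\limsup$.
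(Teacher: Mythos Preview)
Your approach is essentially the same as the paper's: establish pointwise convergence of the unnormalized weights $a_j^{(n)}$ via Stirling-type asymptotics, then pass to the limit in the normalizing constant $Z_{\alpha_n,x_n}^{(n)}$ by dominated convergence, and conclude pointwise (hence weak) convergence of the probability mass functions. The paper carries out exactly this program, calling the weights $b_{\alpha_n,x_n}^{(n)}(j)$.

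The one substantive difference is in how the summable majorant is produced. The paper splits into the cases $j=0$, $1\le j\le n-1$, $j=n$, and $j>n$, and in the middle range applies two-sided Stirling bounds $C_{3,5}s^{s+1/2}e^{-s}\le\Gamma(s+1)\le C_{3,6}s^{s+1/2}e^{-s}$ together with elementary estimates on $(n/(n-j))^{\alpha_n(n-j)+1/2}$ to arrive at a bound of the shape $C\,C'^{\,\alpha_n j}/\Gamma(C''j+1)$. Your route via $\psi(t)\le\log t$ and the resulting inequality $\Gamma(\alpha_n n+1)/\Gamma(\alpha_n(n-j)+1)\le(\alpha_n n+1)^{\alpha_n j}$ reaches the same kind of bound in one line and avoids the case split entirely; it is a genuine streamlining. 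The only point to make explicit is the uniform-in-$n$ domination of $(2\lambda)^{\alpha_n j}/\Gamma(\alpha_n j+1)$: once $\alpha_n\in[\alpha_-,\alpha_+]\subset(0,\infty)$, bound the numerator by $(\max(1,2\lambda))^{\alpha_+ j}$ and, for $j$ with $\alpha_- j\ge 1$, the denominator below by $\Gamma(\alpha_- j+1)$, giving a fixed Mittag-Leffler--type tail; the finitely many remaining $j$ are handled by a constant. With that sentence added your main argument is complete, and the Fatou/tightness fallback is unnecessary.
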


\begin{proof}
    The proof is split into two parts. 

    \smallskip
    \noindent
    {\bf Step 1.}
    We put 
        \[
            b_{\alpha_n, x_n}^{(n)}(j)=\alpha_n \binom{\alpha_n n}{\alpha_n j}x_n^{\alpha_n j}
            (1-x_n)^{\alpha_n(n-j)}\bm{1}_{[0, n]}(j), \qquad j \in \N_0.
        \]
    Because $\alpha_n \to \alpha>0$ and $n x_n \to \lambda/\alpha>0$, 
    there exist four constants $C_{3,1}, C_{3,2}, C_{3,3}, C_{3,4}>0$ such that 
        \begin{equation}\label{Eq:alpha_n-estimate}
            C_{3,1} \le \alpha_n \le C_{3,2}, \qquad 
            C_{3,3}  \le  n x_n \le C_{3,4}, \qquad n \in \N. 
        \end{equation}
    Moreover, the Stirling formula implies the existence of 
    two constants $C_{3,5}>0$ and $C_{3,6}>0$ satisfying 
        \begin{equation}\label{Eq:Stirling-estimate}
            C_{3,5}s^{s+1/2}e^{-s} \le \Gamma(s+1) \le C_{3,6} s^{s+1/2}e^{-s}, 
            \qquad s \ge \alpha >0. 
        \end{equation}
    We now give an estimate of each $b_{\alpha_n, x_n}^{(n)}(j)$.
    Let $j \in \N_0$ be fixed. If $j=0$, then we have 
        \[
            b_{\alpha_n, x_n}^{(n)}(j)
            =\alpha_n(1-x_n)^{\alpha_n n} 
            \le C_{3, 2}\left(1-\frac{C_{3,3}}{n}\right)^{C_{3,1}n}
            \le C_{3, 2}e^{-C_{3,1}C_{3,3}}.
        \]
    Suppose that $j \ge 1$. 
    If $n<j$, then $b_{\alpha_n, x_n}^{(n)}(j)=0$. 
    If $n=j$, then it follows from \eqref{Eq:alpha_n-estimate} that 
        \begin{align*}
            (0 \le)\,b_{\alpha_n, x_n}^{(n)}(j)
            &=\alpha_n x_n^{\alpha_n n} 
            \le C_{3, 2}x_n^{C_{3,1}n} \le C_{3, 2}\left(\frac{C_{3, 4}}{n}\right)^{C_{3, 1}n}=C_{3, 2}\left(\frac{C_{3, 4}}{j}\right)^{C_{3, 1}j}.
        \end{align*}
    On the other hand, if $n>j$, then \eqref{Eq:alpha_n-estimate} and \eqref{Eq:Stirling-estimate}
    allow us to obtain
        \begin{align*}
            (0 \le)\,b_{\alpha_n, x_n}^{(n)}(j)   
            &= \frac{\alpha_n\Gamma(\alpha_n n+1)}{\Gamma(\alpha_n(n-j)+1)\Gamma(\alpha_n j+1)}x_n^{\alpha_n j}(1-x_n)^{\alpha_n(n-j)} \nn \\
            &\le  \frac{C_{3, 2}C_{3,6}(\alpha_n n)^{\alpha_n n+1/2}e^{-\alpha_n n}}{C_{3,5}\{\alpha_n(n-j)\}^{\alpha_n(n-j)+1/2}e^{-\alpha_n(n-j)} \Gamma(\alpha_n j+1)}
            \left(\frac{C_{3, 4}}{n}\right)^{\alpha_n j}\nn \\
            &\le \frac{C_{3,2}C_{3,6}(C_{3,4}\alpha_n)^{\alpha_n j}}{C_{3,5}\Gamma(C_{3, 1} j+1)}e^{-\alpha_n j}
            \left(\frac{n}{n-j}\right)^{\alpha_n(n-j)+1/2}. 
        \end{align*}
    By noting that the function $(0, \infty) \ni s \longmapsto (1+s)^{1/s} \in (0, \infty)$
    is non-increasing, we have 
        \[
            \left(\frac{n}{n-j}\right)^{\alpha_n(n-j)}
            =\left\{ \left(1+\frac{j}{n-j}\right)^{(n-j)/j}\right\}^{\alpha_n j} \le e^{\alpha_n j} .
        \]
    Moreover, we have 
        \[
            \left(\frac{n}{n-j}\right)^{1/2} \le \left(\frac{j+1}{j+1-j}\right)^{1/2}=(j+1)^{1/2}.
        \]
    Bringing everything together, we have
        \begin{align}\label{Eq:b-estimate-3}
            (0 \le)\,b_{\alpha_n, x_n}^{(n)}(j)  
            &=\begin{cases}
            \dis C_{3,2}e^{-C_{3,1}C_{3,3}}
            & \text{if }j=0, \smallskip\\
            \dis \frac{C_{3,2} C_{3,6}(C_{3,2}C_{3,4} \vee 1)^{C_{3,2} j}}{C_{3,5}\Gamma(C_{3,1} j+1)}(j+1)^{1/2}  & \text{if }j=1, 2, \dots, n-1, \smallskip\\
            \dis C_{3,2}  \left(\frac{C_{3,4}}{j}\right)^{C_{3,1} j}
            & \text{if }j=n, \\ 
            0 & \text{if }j>n
            \end{cases} \nn \\
            &\le C_{3,7} \left\{\frac{C_{3,8}^{C_{3,2} j}}{\Gamma(C_{3,1} j +1)} \vee \left(\frac{C_{3, 4}}{j}\right)^{C_{3, 1}j}\right\}
        \end{align}
    for some constants $C_{3,7}, C_{3,8}>0$. 
    Hence, \eqref{Eq:b-estimate-3} leads to
        \begin{equation}\label{Eq:LSN-sup}
            \sum_{j=0}^\infty \sup_{n \in \N} b_{\alpha_n, x_n}^{(n)}(j)
            < \infty.
        \end{equation}
    
    \smallskip
    \noindent{\bf Step 2.}
    By applying the Stirling formula, we have 
        \begin{align*}
            \binom{\alpha_n n}{\alpha_n j}x_n^{\alpha_n j}
            &= \frac{\Gamma(\alpha_n n+1)}{\Gamma(\alpha_n j +1)\Gamma(\alpha_n(n-j)+1)}x_n^{\alpha_n j}\\
            &\sim \frac{\sqrt{2\pi}(\alpha_n n)^{\alpha_n n+1/2}e^{-\alpha_n n}}
            {\Gamma(\alpha_n j+1)\sqrt{2\pi}\{\alpha_n (n-j)\}^{\alpha_n(n-j)+1/2}e^{-\alpha_n(n-j)}} 
            \left(\frac{\lambda}{\alpha_n n}\right)^{\alpha_n j}\\
            &=\frac{\lambda^{\alpha_n j}}{\Gamma(\alpha_n j+1)}e^{-\alpha_n j} 
            \left\{\left(1-\frac{j}{n}\right)^n\right\}^{-\alpha_n(1-j/n)-1/(2n)}\\
            &\to \frac{ \lambda^{\alpha j}}{\Gamma(\alpha j+1)}e^{-\alpha j} 
            e^{\alpha j}=\frac{\lambda^{\alpha j}}{\Gamma(\alpha j+1)} 
        \end{align*}
    as $n \to \infty$. Therefore, for $j \in \N_0$, we obtain 
        \begin{align*}
            b_{\alpha_n, x_n}^{(n)}(j)
            &=  \alpha_n \binom{\alpha_n n}{\alpha_n j}x_n^{\alpha_n j}(1-x_n)^{\alpha_n(n-j)}
            \bm{1}_{[0, n]}(j)\\
            &= \alpha_n \binom{\alpha_n n}{\alpha_n j}x_n^{\alpha_n j}\left(1-\frac{1}{\alpha_n n}(\lambda+o(1))\right)^{\alpha_n(n-j)} \bm{1}_{[0, n]}(j) \\
            &\to \frac{\alpha \lambda^{\alpha j}}{\Gamma(\alpha j+1)} e^{- \lambda}
        \end{align*}
    as $n \to \infty$. Because 
        \[
            Z_{\alpha_n, x_n}^{(n)}
            =\sum_{j=0}^n \alpha_n \binom{\alpha_n n}{\alpha_n j}
            x_n^{\alpha_n j}(1-x_n)^{\alpha_n(n-j)}
            =\sum_{j=0}^\infty b_{\alpha_n, x_n}^{(n)}(j),
        \]
    \eqref{Eq:LSN-sup} and the dominated convergence theorem imply that
        \[
            \lim_{n \to \infty}Z_{\alpha_n, x_n}^{(n)}
            =\sum_{j=0}^\infty \left( \lim_{n \to \infty} b_{\alpha_n, x_n}^{(n)}(j)\right) 
            =\sum_{j=0}^\infty\frac{\alpha \lambda^{\alpha j}}{\Gamma(\alpha j+1)} e^{-\lambda}=\alpha e^{-\lambda} E_\alpha(\lambda^\alpha).
        \]
    Thus, we obtain 
        \[
            \mu_{\alpha_n, x_n}^{(n)}(\{j\})
            =\frac{1}{Z_{\alpha_n, x_n}^{(n)}} 
            b_{\alpha_n, x_n}^{(n)}(j) 
            \to \frac{1}{\alpha e^{-\lambda} E_\alpha(\lambda^\alpha)}
            \frac{\alpha \lambda^{\alpha j}}{\Gamma(\alpha j+1)} e^{- \lambda}=\nu_{\alpha, \lambda}(\{j\})
        \]
    as $n \to \infty$ for all $j \in \N_0$. 
\end{proof}

\subsection{Some properties of the fractional Poisson distribution}
\label{Subsect:fractional Poisson distribution}

First, we seek the mean and variance of the 
fractional Poisson distribution $\nu_{\alpha, \lambda}$
introduced in Definition~\ref{Def:fractional Poisson distribution}. 

\begin{lm}\label{Lem:mean and variance of fractional Poisson}
    Let $S_{\alpha, \lambda}$ be a random variable whose law is $\nu_{\alpha, \lambda}$. 
    Then, we have 
        \begin{align*}
            \E[S_{\alpha, \lambda}]
            &=\frac{\lambda^\alpha E_{\alpha, \alpha}(\lambda^\alpha)}{\alpha E_\alpha(\lambda^\alpha)}, \\ 
            \mathrm{Var}(S_{\alpha, \lambda})
            &=\frac{\lambda^\alpha}{\alpha^2 E_\alpha(\lambda^\alpha)^2}\Big[
            \alpha E_{\alpha}(\lambda^\alpha)
            E_{\alpha, \alpha}(\lambda^\alpha)\\
            &\quad
            +\lambda^\alpha E_{\alpha}(\lambda^\alpha)
            \big\{ E_{\alpha, 2\alpha-1}(\lambda^\alpha)
            -(\alpha-1)E_{\alpha, 2\alpha}(\lambda^\alpha)\big\}
            -\lambda^\alpha
            E_{\alpha, \alpha}(\lambda^\alpha)^2\Big],
        \end{align*}
    where in general  $E_{\alpha, \beta}(z)$ is the Mittag-Leffler function 
    of two parameters $\alpha>0$ and $\beta \in \mathbb{C}$ defined by 
        \begin{equation}\label{Eq:generalized-ML-function}
        E_{\alpha, \beta}(z):=\sum_{j=0}^\infty \frac{z^j}{\Gamma(\alpha j+\beta)}, 
        \qquad z \in \mathbb{C}. 
        \end{equation}
\end{lm}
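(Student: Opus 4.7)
The plan is to compute $\E[S_{\alpha, \lambda}]$ and $\E[S_{\alpha, \lambda}^2]$ by direct series manipulations, leveraging two derivative identities for the Mittag--Leffler function.

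For the mean, I would start from
\begin{equation*}
    \E[S_{\alpha, \lambda}]
    = \frac{1}{E_\alpha(\lambda^\alpha)} \sum_{j=1}^{\infty} \frac{j \lambda^{\alpha j}}{\Gamma(\alpha j + 1)}.
\end{equation*}
Using $\Gamma(\alpha j + 1) = \alpha j \, \Gamma(\alpha j)$ and then shifting the summation index by one, the inner sum collapses to $(\lambda^\alpha / \alpha) E_{\alpha, \alpha}(\lambda^\alpha)$. Equivalently, this is the identity $\alpha E_\alpha'(z) = E_{\alpha, \alpha}(z)$ evaluated at $z = \lambda^\alpha$. This immediately produces the stated expression for the mean.

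For the second moment, I would apply the Euler operator $\lambda \frac{\dd}{\dd \lambda}$ twice to $E_\alpha(\lambda^\alpha)$: each application inserts an extra factor of $\alpha j$ under the series, so $\bigl(\lambda \tfrac{\dd}{\dd \lambda}\bigr)^{2} E_\alpha(\lambda^\alpha) = \alpha^2 E_\alpha(\lambda^\alpha) \E[S_{\alpha, \lambda}^2]$. The first application yields $\lambda^\alpha E_{\alpha, \alpha}(\lambda^\alpha)$ by the identity above, and the second, by the product rule, gives $\alpha \lambda^\alpha E_{\alpha, \alpha}(\lambda^\alpha) + \alpha \lambda^{2\alpha} E_{\alpha, \alpha}'(\lambda^\alpha)$. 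So the problem reduces to expressing $E_{\alpha, \alpha}'(z)$ through two-parameter Mittag--Leffler functions. The key identity is
\begin{equation*}
    \alpha E_{\alpha, \alpha}'(z)
    = E_{\alpha, 2\alpha - 1}(z) - (\alpha - 1) E_{\alpha, 2\alpha}(z),
\end{equation*}
which I would prove by differentiating term by term and reindexing to write $E_{\alpha, \alpha}'(z) = \sum_{k=0}^{\infty} (k+1) z^k / \Gamma(\alpha k + 2\alpha)$, then using the decomposition $\alpha(k+1) = (\alpha k + 2\alpha - 1) - (\alpha - 1)$: the first piece, combined with $\Gamma(\alpha k + 2\alpha) = (\alpha k + 2\alpha - 1) \Gamma(\alpha k + 2\alpha - 1)$, yields $E_{\alpha, 2\alpha - 1}(z)$, while the second piece is $(\alpha - 1) E_{\alpha, 2\alpha}(z)$.

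Substituting back gives $\E[S_{\alpha, \lambda}^2]$ in closed form, and $\mathrm{Var}(S_{\alpha, \lambda}) = \E[S_{\alpha, \lambda}^2] - \E[S_{\alpha, \lambda}]^2$ produces the stated expression once the common factor $\lambda^\alpha / (\alpha^2 E_\alpha(\lambda^\alpha)^2)$ is pulled out. The main (and essentially only) obstacle is spotting the correct decomposition of $\alpha(k+1)$ that matches precisely the two-parameter indices $2\alpha - 1$ and $2\alpha$ in the target formula; everything else is routine bookkeeping, with the interchange of summation and differentiation justified by the absolute convergence of the Mittag--Leffler series on all of $\mathbb{C}$.
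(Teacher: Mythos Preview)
Your proposal is correct and rests on the same two derivative identities the paper uses, namely $\alpha E_\alpha'(z)=E_{\alpha,\alpha}(z)$ and $\alpha E_{\alpha,\alpha}'(z)=E_{\alpha,2\alpha-1}(z)-(\alpha-1)E_{\alpha,2\alpha}(z)$; the only cosmetic difference is that the paper packages the moment computation through the characteristic function $\varphi_{\alpha,\lambda}(\xi)=E_\alpha(\lambda^\alpha e^{i\xi})/E_\alpha(\lambda^\alpha)$ and differentiates in $\xi$, whereas you apply the Euler operator $\lambda\,\dd/\dd\lambda$ directly to $E_\alpha(\lambda^\alpha)$. The substance is identical.
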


\begin{proof}
    Let $\varphi_{\alpha, \lambda}(\xi)$, $\xi \in \R$ be 
    the characteristic function of $\nu_{\alpha, \lambda}$. 
    It is straightforward to check that 
    \begin{align*}
        \varphi_{\alpha, \lambda}(\xi)
        &=\frac{1}{E_\alpha(\lambda^\alpha)}\sum_{j=0}^\infty e^{i\xi j} 
        \frac{\lambda^{\alpha j}}{\Gamma(\alpha j+1)} 
        =\frac{1}{E_\alpha(\lambda^\alpha)}
        \sum_{j=0}^\infty 
        \frac{(\lambda^\alpha e^{i\xi})^j}{\Gamma(\alpha j+1)} 
        =\frac{E_\alpha(\lambda^\alpha e^{i\xi})}{E_\alpha(\lambda^\alpha)}
    \end{align*}
    for $\xi \in \R$. Because  
    \begin{align}
        \frac{\dd}{\dd z}E_\alpha(z)
        &= \sum_{j=1}^\infty j \frac{z^{j-1}}{\Gamma(\alpha j+1)}
        =\frac{1}{\alpha}\sum_{j=0}^\infty \frac{z^j}{\Gamma(\alpha j+\alpha)}
        =\frac{1}{\alpha}E_{\alpha, \alpha}(z)
        \label{Eq:derivative-ML}
    \end{align}
    and
    \begin{align}
        \frac{\dd}{\dd z}E_{\alpha, \alpha}(z)
        &= \sum_{j=1}^\infty j  \frac{z^{j-1}}{\Gamma(\alpha j+\alpha)} \nn \\
        &= \sum_{j=0}^\infty (j+1)  \frac{z^j}{\Gamma(\alpha j+2\alpha)}\nn  \\
        &=\frac{1}{\alpha}\sum_{j=0}^\infty 
        (\alpha j+2\alpha -1)  \frac{z^j}{\Gamma(\alpha j+2\alpha)}
        -\frac{\alpha-1}{\alpha}\sum_{j=0}^\infty 
        \frac{z^j}{\Gamma(\alpha j + 2\alpha)}\nn \\
        &=\frac{1}{\alpha}\sum_{j=0}^\infty 
        \frac{z^j}{\Gamma(\alpha j+2\alpha-1)}
        -\frac{\alpha-1}{\alpha}\sum_{j=0}^\infty 
        \frac{z^j}{\Gamma(\alpha j + 2\alpha)}\nn \\
        &=\frac{1}{\alpha}E_{\alpha, 2\alpha-1}(z)-\frac{\alpha-1}{\alpha}E_{\alpha, 2\alpha}(z), 
        \label{Eq:2nd-derivative-ML}
    \end{align}
    we have 
    \begin{align}
        \frac{\dd}{\dd \xi}\varphi_{\alpha, \lambda}(\xi)
        &= \frac{1}{E_\alpha(\lambda^\alpha)}
        \frac{1}{\alpha}E_{\alpha, \alpha}(\lambda^\alpha e^{i\xi})
        i\lambda^\alpha e^{i\xi} 
        =i \frac{\lambda^\alpha e^{i\xi} E_{\alpha, \alpha}(\lambda^\alpha e^{i\xi})}{\alpha E_\alpha(\lambda^\alpha)}
        \label{Eq:fractional Poisson CF-1}
    \end{align}
    and 
    \begin{align}
        &\frac{\dd^2}{\dd \xi^2}\varphi_{\alpha, \lambda}(\xi)\nn\\
        &=i \frac{\lambda^\alpha}{\alpha E_\alpha(\lambda^\alpha)}
        \Bigg\{ ie^{i\xi}E_{\alpha, \alpha}(\lambda^\alpha e^{i\xi}) 
        +i\lambda^\alpha e^{2i\xi}\left(
        \frac{1}{\alpha}E_{\alpha, 2\alpha-1}(\lambda^\alpha e^{i\xi})
        -\frac{\alpha-1}{\alpha}
        E_{\alpha, 2\alpha}(\lambda^\alpha e^{i\xi})\right)\Bigg\} \nn \\
        &=- \frac{\lambda^\alpha e^{i\xi}}{\alpha E_\alpha(\lambda^\alpha)}
        \Bigg\{E_{\alpha, \alpha}(\lambda^\alpha e^{i\xi}) 
        +\lambda^\alpha e^{i\xi}
        \left(
        \frac{1}{\alpha}E_{\alpha, 2\alpha-1}(\lambda^\alpha e^{i\xi})-\frac{\alpha-1}{\alpha}
        E_{\alpha, 2\alpha}(\lambda^\alpha e^{i\xi})\right)\Bigg\}.
        \label{Eq:fractional Poisson CF-2}
    \end{align}
    By substituting $\xi=0$ for \eqref{Eq:fractional Poisson CF-1}
    and \eqref{Eq:fractional Poisson CF-2}, we obtain
    \begin{align*}
        \E[S_{\alpha, \lambda}]
        &=\frac{\lambda^\alpha E_{\alpha, \alpha}(\lambda^\alpha)}{\alpha E_\alpha(\lambda^\alpha)}, \\
        \E[S_{\alpha, \lambda}^2]
        &= \frac{\lambda^\alpha}{\alpha^2 E_\alpha(\lambda^\alpha)}\big[\alpha E_{\alpha, \alpha}(\lambda^\alpha) 
        +\lambda^\alpha \big\{
        E_{\alpha, 2\alpha-1}(\lambda^\alpha)
        -(\alpha-1)E_{\alpha, 2\alpha}(\lambda^\alpha)\big\}\big].
    \end{align*}
    The variance of $S_{\alpha, \lambda}$ is obtained straightforwardly from the two equalities above. 
\end{proof}

As with the Poisson distribution, it is quite cumbersome to express both the raw and central moments of the fractional Poisson distribution, but we can derive the following recurrence formula for its central moments. 

\begin{tm}
    For $m \in  \N_0$, we define 
        \begin{equation}\label{Eq:definition-moment-fP}
            M_{\alpha, \lambda}^{(m)}
            :=\E\big[(S_{\alpha, \lambda}-\E[S_{\alpha, \lambda}])^m\big]
            =\sum_{j=0}^{\infty} \left(j -
            \frac{\lambda^\alpha E_{\alpha, \alpha}(\lambda^\alpha)}{\alpha E_\alpha(\lambda^\alpha)}\right)^m
            \nu_{\alpha, \lambda}(\{j\}). 
        \end{equation}
    Then, we have 
        \begin{align}
            M_{\alpha, \lambda}^{(m+1)}&=
            \frac{\lambda}{\alpha}\frac{\dd}{\dd \lambda}M_{\alpha, \lambda}^{(m)}
            +mM_{\alpha, \lambda}^{(2)}M_{\alpha, \lambda}^{(m-1)}. 
            \label{Eq:recurrence formula fractional Poisson}
        \end{align}
\end{tm}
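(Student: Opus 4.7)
The plan is to derive the recurrence by differentiating $M_{\alpha,\lambda}^{(m)}$ in $\lambda$. Writing $\mu(\lambda) := \E[S_{\alpha,\lambda}]$ and performing a logarithmic differentiation of the point mass $\nu_{\alpha,\lambda}(\{j\}) = \lambda^{\alpha j}/(\Gamma(\alpha j+1)E_\alpha(\lambda^\alpha))$ while using the identity $E_\alpha'(z) = \alpha^{-1}E_{\alpha,\alpha}(z)$ from \eqref{Eq:derivative-ML}, I expect the clean formula
\begin{equation*}
\frac{\dd}{\dd\lambda}\nu_{\alpha,\lambda}(\{j\}) = \frac{\alpha(j-\mu(\lambda))}{\lambda}\,\nu_{\alpha,\lambda}(\{j\})
\end{equation*}
to drop out after recognizing that $\lambda^{\alpha-1} E_{\alpha,\alpha}(\lambda^\alpha)/E_\alpha(\lambda^\alpha) = \alpha\mu(\lambda)/\lambda$ from the expression of the mean in Lemma~\ref{Lem:mean and variance of fractional Poisson}. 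This is the pivot of the whole argument: the parameter derivative of the law produces precisely a factor proportional to $j - \mu$, which is what allows the moment index to shift upward by one.

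Once this is in hand, I would differentiate the series defining $M_{\alpha,\lambda}^{(m)}$ termwise, with the product rule producing two contributions---one from $(j-\mu(\lambda))^m$ in which the chain rule pulls down a factor $-m\mu'(\lambda)$, and one from the weight $\nu_{\alpha,\lambda}(\{j\})$ contributing the factor $\alpha(j-\mu)/\lambda$ above. These combine cleanly into
\begin{equation*}
\frac{\dd}{\dd\lambda} M_{\alpha,\lambda}^{(m)} = -m\mu'(\lambda)\,M_{\alpha,\lambda}^{(m-1)} + \frac{\alpha}{\lambda}\,M_{\alpha,\lambda}^{(m+1)},
\end{equation*}
which I would then solve for $M_{\alpha,\lambda}^{(m+1)}$ to obtain a tentative identity involving $\mu'(\lambda)$.

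To identify the coefficient $\lambda\mu'(\lambda)/\alpha$ with $M_{\alpha,\lambda}^{(2)}$ (so that the recurrence becomes intrinsic, free of the auxiliary quantity $\mu'(\lambda)$), I would specialize this preliminary identity to $m=1$. Since $M_{\alpha,\lambda}^{(1)} \equiv 0$ and $M_{\alpha,\lambda}^{(0)} = 1$, the $m=1$ case collapses to $M_{\alpha,\lambda}^{(2)} = \lambda\mu'(\lambda)/\alpha$; substituting this back into the general identity yields exactly \eqref{Eq:recurrence formula fractional Poisson}. The only loose end is justifying the termwise differentiation, but the factorial decay of $\lambda^{\alpha j}/\Gamma(\alpha j+1)$ makes the series absolutely and locally uniformly convergent in $\lambda$, so I do not expect a genuine obstacle here; the real content is packed into the one short logarithmic differentiation at the outset.
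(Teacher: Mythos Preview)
Your proposal is correct and follows essentially the same route as the paper: compute $\frac{\dd}{\dd\lambda}\nu_{\alpha,\lambda}(\{j\}) = \frac{\alpha}{\lambda}(j-\mu(\lambda))\nu_{\alpha,\lambda}(\{j\})$, differentiate the series for $M_{\alpha,\lambda}^{(m)}$ termwise, and identify the coefficient $\lambda\mu'(\lambda)/\alpha$ with $M_{\alpha,\lambda}^{(2)}$. The only difference is in this last identification: the paper computes $\mu'(\lambda)$ explicitly in terms of Mittag-Leffler functions and matches it against the variance formula from Lemma~\ref{Lem:mean and variance of fractional Poisson}, whereas you extract the relation directly by specializing the preliminary recurrence to $m=1$---a slightly cleaner shortcut that avoids the explicit calculation.
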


\begin{proof}
    Thanks to \eqref{Eq:derivative-ML} and \eqref{Eq:2nd-derivative-ML}, we have 
    \begin{align*}
        \frac{\dd}{\dd \lambda}E_\alpha(\lambda^\alpha)
        &=\lambda^{\alpha-1} E_{\alpha, \alpha}(\lambda^\alpha), \\
        \frac{\dd}{\dd \lambda}E_{\alpha, \alpha}(\lambda^\alpha)
        &=\lambda^{\alpha-1}
        \left\{ E_{\alpha, 2\alpha-1}(\lambda^\alpha)
        -(\alpha-1)E_{\alpha, 2\alpha}(\lambda^\alpha)\right\}. 
    \end{align*}
    Moreover, we have
    \begin{align*}
        \frac{\dd}{\dd \lambda}\nu_{\alpha, \lambda}(\{j\})
        &=-\frac{\lambda^{\alpha-1}E_{\alpha, \alpha}(\lambda^\alpha)}{E_\alpha(\lambda^\alpha)^2}
        \frac{\lambda^{\alpha j}}{\Gamma (\alpha j+1)}
        +\frac{\alpha j}{\lambda E_\alpha(\lambda^\alpha)}\frac{\lambda^{\alpha j}}{\Gamma(\alpha j+1)}\\
        &=\frac{\alpha}{\lambda E_\alpha(\lambda^\alpha)}
        \left(j -\frac{\lambda^\alpha E_{\alpha, \alpha}(\lambda^\alpha)}{\alpha E_\alpha(\lambda^\alpha)}\right)
        \frac{\lambda^{\alpha j}}{\Gamma(\alpha j+1)}\\
        &=\frac{\alpha}{\lambda}\left(j -\frac{\lambda^\alpha E_{\alpha, \alpha}(\lambda^\alpha)}{\alpha E_\alpha(\lambda^\alpha)}\right)\nu_{\alpha, \lambda}(\{j\})
    \end{align*}
    for $j \in \N_0$. 
    By differentiating both sides of \eqref{Eq:definition-moment-fP} with respect to $\lambda$, 
    we obtain
    \begin{align*}
        \frac{\dd}{\dd \lambda}M_{\alpha, \lambda}^{(m)}
        &=-m \sum_{j=0}^\infty \left(j -
        \frac{\lambda^\alpha E_{\alpha, \alpha}(\lambda^\alpha)}{\alpha E_\alpha(\lambda^\alpha)}\right)^{m-1} \nu_{\alpha, \lambda}(\{j\})
        \frac{\dd}{\dd \lambda}\left(\frac{\lambda^\alpha E_{\alpha, \alpha}(\lambda^\alpha)}{\alpha E_\alpha(\lambda^\alpha)}\right)\\
        &\quad+\sum_{j=0}^\infty 
        \left(j -
        \frac{\lambda^\alpha E_{\alpha, \alpha}(\lambda^\alpha)}{\alpha E_\alpha(\lambda^\alpha)}\right)^m
        \frac{\alpha}{\lambda}\left(j -\frac{\lambda^\alpha E_{\alpha, \alpha}(\lambda^\alpha)}{\alpha E_\alpha(\lambda^\alpha)}\right)\nu_{\alpha, \lambda}(\{j\}) \\\
        &=-mM_{\alpha, \lambda}^{(m-1)}\frac{\dd}{\dd \lambda}\left(\frac{\lambda^\alpha E_{\alpha, \alpha}(\lambda^\alpha)}{\alpha E_\alpha(\lambda^\alpha)}\right)
        +\frac{\alpha}{\lambda}M_{\alpha, \lambda}^{(m+1)}.
    \end{align*}
    Because it holds that 
    \begin{align*}
      \frac{\dd}{\dd \lambda}\left(\frac{\lambda^\alpha E_{\alpha, \alpha}(\lambda^\alpha)}{\alpha E_\alpha(\lambda^\alpha)}\right)
      &=\frac{\lambda^{\alpha-1} }{\alpha E_\alpha(\lambda^\alpha)^2}\Big[
        \alpha E_{\alpha}(\lambda^\alpha)
        E_{\alpha, \alpha}(\lambda^\alpha)\\
        &\quad
        + \lambda^\alpha E_{\alpha}(\lambda^\alpha)
        \big\{
        E_{\alpha, 2\alpha-1}(\lambda^\alpha)
        -(\alpha-1)E_{\alpha, 2\alpha}(\lambda^\alpha)\big\}
        - \lambda^\alpha 
        E_{\alpha, \alpha}(\lambda^\alpha)^2\Big]\\
        &=\frac{\alpha}{\lambda}\mathrm{Var}(S_{\alpha, \lambda})
        =\frac{\alpha}{\lambda}M_{\alpha, \lambda}^{(2)},
    \end{align*}
    we conclude \eqref{Eq:recurrence formula fractional Poisson}. 
\end{proof}

\section{{\bf The fractional Bernstein operator and limit theorems for its iterates}}
\label{Sect:fractional Bernstein operator}

\subsection{The fractional Bernstein operator and its approximating property}
\label{Subsect:fractional Bernstein}

Let $\alpha>0$. In this section, we prove that the fractional Bernstein operator 
$B_{\alpha, n}$ introduced in Definition~\ref{Def:fractional-Bernstein-operator} 
uniformly approximates all continuous functions on $[0, 1]$,
similarly to the classical case. 

\begin{tm}
\label{Thm:Fractional-Bernstein-approximation}
    Let $K$ be a compact subset of $(0, \infty)$. 
    For $f \in C([0, 1])$, we have 
        \[
            \lim_{n \to \infty}
            \sup_{\alpha \in K}\|B_{\alpha, n}f-f\|_\infty=0. 
        \] 
\end{tm}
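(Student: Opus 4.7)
My plan is to adapt the classical Bohman--Korovkin style proof, exploiting uniform continuity of $f$ together with the uniform second-moment estimates already in hand. Since each $B_{\alpha,n}$ is a positive linear operator with $B_{\alpha,n}\mathbf{1}=\mathbf{1}$ (by definition of $\mu_{\alpha,x}^{(n)}$ as a probability measure), the inequality
\[
|B_{\alpha,n}f(x)-f(x)|\le \sum_{j=0}^{n}\mu_{\alpha,x}^{(n)}(\{j\})\left|f\!\left(\frac{j}{n}\right)-f(x)\right|
\]
holds for every $x\in[0,1]$, $\alpha>0$, and $n\in\N$, so the argument reduces to controlling the right-hand side uniformly.

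Fix $f\in C([0,1])$ and $\varepsilon>0$. By uniform continuity there is $\delta>0$ with $|f(y)-f(x)|<\varepsilon$ whenever $|y-x|<\delta$; for the remaining pairs I use the standard dominating trick
\[
|f(y)-f(x)|\le \varepsilon + \frac{2\|f\|_\infty}{\delta^{2}}(y-x)^{2},\qquad x,y\in[0,1].
\]
Applying $B_{\alpha,n}$ (positivity and $B_{\alpha,n}\mathbf{1}=\mathbf{1}$) gives
\[
\|B_{\alpha,n}f-f\|_\infty \le \varepsilon + \frac{2\|f\|_\infty}{\delta^{2}} \sup_{x\in[0,1]}\sum_{j=0}^{n}\mu_{\alpha,x}^{(n)}(\{j\})\left(\frac{j}{n}-x\right)^{2}.
\]

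The key step is to show the second-moment term is $O(1/n)$ uniformly in $\alpha\in K$. By Theorem~\ref{Thm:Uniform-bound-normalized-constant}, the quantity $1/Z_{\alpha,x}^{(n)}$ is bounded above by a constant depending only on $K$, uniformly in $x\in[0,1]$ and $n\in\N$. Combining this with Corollary~\ref{Cor:Moment-estimate} applied with $m=2$ yields
\[
\sup_{\alpha\in K}\max_{x\in[0,1]}\sum_{j=0}^{n}\mu_{\alpha,x}^{(n)}(\{j\})\left(\frac{j}{n}-x\right)^{2}
=O\!\left(\frac{1}{n}\right)
\]
as $n\to\infty$. Plugging this in, taking $\limsup_{n\to\infty}$ and then sending $\varepsilon\searrow 0$ completes the proof.

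There is no genuine obstacle here, because all the heavy lifting has been done in Section~\ref{Sect:fractional-binomial-distribution}: the only subtlety is to make sure the bounds are uniform over $\alpha\in K$, which is precisely what Theorem~\ref{Thm:Uniform-bound-normalized-constant} and Corollary~\ref{Cor:Moment-estimate} deliver. I would simply record these two uniform estimates, assemble them via the Bohman--Korovkin inequality above, and conclude.
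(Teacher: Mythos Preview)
Your proof is correct and essentially the same as the paper's: both rely on uniform continuity of $f$, the uniform bound on $1/Z_{\alpha,x}^{(n)}$ from Theorem~\ref{Thm:Uniform-bound-normalized-constant}, and the $O(1/n)$ second-moment estimate of Corollary~\ref{Cor:Moment-estimate}. The only cosmetic difference is that the paper routes the tail bound through the weak law of large numbers (Theorem~\ref{Thm:WLLN}) via Chebyshev, whereas you apply the Bohman--Korovkin inequality directly to the second moment; the ingredients and estimates are identical.
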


\begin{proof}
    Let $\alpha>0$, $n \in \N$, $x \in [0, 1]$, and $S_{\alpha, x}^{(n)}$ be a random variable whose 
    law is $\mu_{\alpha, x}^{(n)}$. Then, 
    $B_{\alpha, n}f$, $f \in C([0, 1])$ can be expressed as 
        \begin{equation}\label{Eq:fractional-Bernstein-expression}
            B_{\alpha, n}f(x)
            =\E\left[f \left(\frac{1}{n}S_{\alpha, x}^{(n)}\right)\right], 
            \qquad x \in [0, 1]. 
        \end{equation}
    Because $f \in C([0, 1])$ is uniformly continuous on $[0, 1]$, for $\ve>0$, 
    there exists some $\rho=\rho(\ve)>0$ such that $|x-y|<\rho$ implies that $|f(x)-f(y)|<\ve$. 
    Then, it follows from \eqref{Eq:fractional-Bernstein-expression} and the Chebyshev inequality that
        \begin{align*}
            |B_{\alpha, n}f(x)-f(x)|
            &= \left|\E\left[f\left(\frac{1}{n}S_{\alpha, x}^{(n)}\right)-f(x)\right]\right| \\
            &\le \E\left[\left|f\left(\frac{1}{n}S_{\alpha, x}^{(n)}\right)-f(x)\right|
            \, : \, \left|\frac{1}{n}S_{\alpha, x}^{(n)}-x\right|<\rho\right] \\
            &\quad
            +\E\left[\left|f\left(\frac{1}{n}S_{\alpha, x}^{(n)}\right)-f(x)\right|
            \, : \, \left|\frac{1}{n}S_{\alpha, x}^{(n)}-x\right| \ge \rho\right] \\
            &\le \ve 
            + 2\|f\|_\infty \mathbb{P}\left(\left|\frac{1}{n}S_{\alpha, x}^{(n)}-x\right| \ge \rho\right). 
        \end{align*}
    Therefore, we obtain from Theorem~\ref{Thm:WLLN} that 
        \[
            \sup_{\alpha \in K}\|B_{\alpha, n}f-f\|_\infty
            =\ve + O\left(\frac{1}{n}\right)
        \]
    as $n \to \infty$, which concludes the desired uniform convergence because $\ve>0$ is chosen arbitrarily. 
\end{proof}

\subsection{The Kelisky--Rivlin-type theorem for the fractional Bernstein operator}
\label{Subsect:Kelisky-Rivlin}

It is natural to consider the behaviors of the $k$-fold iterates $B_{\alpha, n}^k$ 
of the $\alpha$-fractional Bernstein operator $B_{\alpha, n}$ as $k \to \infty$. 
In this section, we prove the Kelisky--Rivlin-type limit theorem 
for $B_{\alpha, n}$ 
based on an argument similar to that in the proof of \cite[Theorem~1]{Rus}.

\begin{tm}
\label{Thm:Kelisky-Rivlin-fractional}
    Let $\alpha > 0$ and $n \in \N$. 
    For $f \in C([0, 1])$, there exists a unique function 
    $f_{\alpha, n} \in C([0, 1])$ satisfying 
    $B_{\alpha, n}f_{\alpha, n}=f_{\alpha, n}$, $f_{\alpha, n}(0)=f(0)$, $f_{\alpha, n}(1)=f(1)$, 
    and 
        \begin{equation}\label{Eq:Kelisky-Rivlin-fractional}
            \lim_{k \to \infty} \|B_{\alpha, n}^k f - f_{\alpha, n}\|_\infty=0.
        \end{equation}
    In particular, we have 
        \[
            f_{1, n}(x)=f(1)x+f(0)(1-x), \qquad x \in [0, 1]. 
        \]
\end{tm}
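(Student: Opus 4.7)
The natural strategy is to apply the Banach fixed point theorem to $B_{\alpha, n}$ restricted to the closed affine subspace
\[
    M_f := \{g \in C([0, 1]) \mid g(0) = f(0), \, g(1) = f(1)\}
\]
of $C([0,1])$, equipped with the supremum norm. Because $\mu_{\alpha, 0}^{(n)} = \delta_0$ and $\mu_{\alpha, 1}^{(n)} = \delta_1$ by Definition~\ref{Def:fractional-binomial-distribution}, every $g \in C([0,1])$ satisfies $B_{\alpha, n}g(0) = g(0)$ and $B_{\alpha, n}g(1) = g(1)$. Combined with the continuity of $x \longmapsto \mu_{\alpha, x}^{(n)}(\{j\})$ on $[0, 1]$ (which follows from the explicit formula for $Z_{\alpha, x}^{(n)}$ together with the conventions in Definition~\ref{Def:fractional-binomial-distribution}), this shows that $B_{\alpha, n}$ maps $M_f$ into itself.

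Next, for $g, h \in M_f$ the differences at $j = 0$ and $j = n$ vanish, so
\[
    |B_{\alpha, n}g(x) - B_{\alpha, n}h(x)|
    \le \|g - h\|_\infty \bigl(1 - \mu_{\alpha, x}^{(n)}(\{0\}) - \mu_{\alpha, x}^{(n)}(\{n\})\bigr).
\]
Setting $c_{\alpha, n} := \sup_{x \in [0, 1]}(1 - \mu_{\alpha, x}^{(n)}(\{0\}) - \mu_{\alpha, x}^{(n)}(\{n\}))$, the crux of the argument is to establish $c_{\alpha, n} < 1$. The relevant function is continuous on $[0,1]$, equals $1$ at $x = 0$ and $x = 1$, and for $x \in (0, 1)$ equals $\alpha\{(1-x)^{\alpha n} + x^{\alpha n}\}/Z_{\alpha, x}^{(n)}$, which is strictly positive thanks to the uniform upper bound on $Z_{\alpha, x}^{(n)}$ provided by Theorem~\ref{Thm:Uniform-bound-normalized-constant}. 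By compactness the infimum of this strictly positive continuous function on $[0, 1]$ is attained and hence strictly positive, yielding $c_{\alpha, n} < 1$. The Banach fixed point theorem then delivers a unique fixed point $f_{\alpha, n} \in M_f$ of $B_{\alpha, n}$ and the geometric convergence $\|B_{\alpha, n}^k f - f_{\alpha, n}\|_\infty \to 0$ as $k \to \infty$, which is precisely~\eqref{Eq:Kelisky-Rivlin-fractional}.

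For the last assertion, a direct computation with the classical binomial identities shows $B_n 1 = 1$ and $B_n \mathrm{id} = \mathrm{id}$, so the affine function $f_L(x) := f(0)(1-x) + f(1) x$ belongs to $M_f$ and satisfies $B_n f_L = f_L$; uniqueness of the fixed point forces $f_{1, n} = f_L$. The principal obstacle in the argument is the verification that $c_{\alpha, n} < 1$ uniformly in $x$; this is where the framework differs conceptually from the classical case (in which $B_n$ already fixes all linear functions, sidestepping this step), but it is handled cleanly here because the fractional binomial distribution necessarily assigns strictly positive mass to both endpoints for every $x \in (0, 1)$.
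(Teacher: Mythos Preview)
Your proof is correct and follows essentially the same approach as the paper: both restrict $B_{\alpha,n}$ to the closed affine subspace $\{g\in C([0,1])\mid g(0)=f(0),\,g(1)=f(1)\}$ and apply the contraction principle, using that the endpoint masses $\mu_{\alpha,x}^{(n)}(\{0\})+\mu_{\alpha,x}^{(n)}(\{n\})$ stay bounded away from zero. The only difference is that the paper obtains the explicit lower bound $(1\wedge\alpha)\,2^{-\alpha n}$ for these masses (via Theorem~\ref{Thm:Uniform-bound-normalized-constant} and $x^{\alpha n}+(1-x)^{\alpha n}\ge 2^{-\alpha n}$), whereas you reach the same conclusion by a continuity--compactness argument; the paper's route gives a quantitative contraction rate, but your soft argument suffices for the statement as formulated.
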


\begin{proof}
    Let $\alpha>0$ and $n \in \N$. 
    For $a, b \in \R$, we define a closed subset $C_{a, b}([0, 1])$ of $C([0, 1])$ by 
        \[
            C_{a, b}([0, 1]):=\{f \in C([0, 1]) \mid f(0)=a, f(1)=b\}. 
        \]
    Then, we see that $f \in C_{a, b}([0, 1])$ implies $B_{\alpha, n}f \in C_{a, b}([0, 1])$ 
    because $B_{\alpha, n}f(0)=f(0)$ and $B_{\alpha, n}f(1)=f(1)$.
    For $f, g \in C_{a, b}([0, 1])$, we have 
        \begin{align*}
            |B_{\alpha, n}f(x)-B_{\alpha, n}g(x)| 
            &=\left|\sum_{j=0}^n \mu_{\alpha, x}^{(n)}(\{j\}) \left\{ f\left(\frac{j}{n}\right)-g\left(\frac{j}{n}\right)\right\}\right|\\
            &=\left|\sum_{j=1}^{n-1} \mu_{\alpha, x}^{(n)}(\{j\}) \left\{ f\left(\frac{j}{n}\right)-g\left(\frac{j}{n}\right)\right\}\right| \\
            &\le \|f-g\|_\infty 
            \left|\sum_{j=0}^{n} \mu_{\alpha, x}^{(n)}(\{j\}) - \frac{\alpha x^{\alpha n}}{Z_{\alpha, x}^{(n)}}
            - \frac{\alpha (1-x)^{\alpha n}}{Z_{\alpha, x}^{(n)}}\right| \\
            &=\|f-g\|_\infty \left(1-\frac{\alpha}{Z_{\alpha, x}^{(n)}}\{x^{\alpha n}+(1-x)^{\alpha n}\}\right).
        \end{align*}
    We note that 
        \[
            x^{\alpha n}+(1-x)^{\alpha n} 
            \ge \left(\frac{1}{2}\right)^{\alpha n}, \qquad x \in [0, 1]. 
        \]
    Moreover, we have 
        \[
            \frac{\alpha}{Z_{\alpha, x}^{(n)}} \ge 1 \wedge \alpha, \qquad x \in [0, 1]
        \]
    by applying Theorem~\ref{Thm:Uniform-bound-normalized-constant}. Therefore, we obtain 
        \[
            1-\frac{\alpha}{Z_{\alpha, x}^{(n)}}\{x^{\alpha n}+(1-x)^{\alpha n}\}
            \le 1-(1 \wedge \alpha)\left(\frac{1}{2}\right)^{\alpha n}<1, \qquad x \in [0, 1], 
        \]
    which means that $B_{\alpha, n}$ is contractive on $C_{a, b}([0, 1])$ for $a, b \in \R$. 
    Then, the contraction principle allows us to find the unique function 
    $f_{\alpha, n} \in C([0, 1])$ satisfying 
    $B_{\alpha, n}f_{\alpha, n}=f_{\alpha, n}$, $f_{\alpha, n}(0)=f(0)$, 
    $f_{\alpha, n}(1)=f(1)$, and \eqref{Eq:Kelisky-Rivlin-fractional}. 
    The explicit representation of $f_{1, n}$ is obvious from \cite[Theorem~1]{KR67}. 
\end{proof}

\subsection{Limit theorems for iterates of the fractional Bernstein operator}
\label{Subsect:limit theorems for fractional Bernstein}

Let $\alpha >0$ be fixed. 
In this section, we identify some limiting behaviors of the iterate
of $B_{\alpha, n}$. 
Let $\mathcal{L}_\alpha$ be the second-order differential operator 
acting on $C^2([0, 1])$ defined by 
    \[
        \mathcal{L}_\alpha f(x):=\frac{1}{2\alpha}x(1-x)f''(x)
    \]
for $f \in C^2([0,1])$ and $x \in [0, 1]$. 
The following property is well known. 

\begin{lm}[cf.~{\cite[Theorem~8.2.8]{EK}} and {\cite[Theorem~6.2.6]{Alt-C}}]
\label{Lem:Altomare-Campiti}
    The linear operator $(\mathcal{L}_\alpha, C^2([0, 1]))$ is closable on $C([0, 1])$ 
    and its closure $(\mathcal{L}_\alpha, \mathrm{Dom}(\mathcal{L}_\alpha))$
    generates the Feller semigroup $(T_\alpha(t))_{t \ge 0}$ on $C([0, 1])$.
\end{lm}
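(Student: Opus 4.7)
The plan is essentially to invoke the cited literature. First I would observe that $\mathcal{L}_\alpha = \alpha^{-1}\mathcal{L}_1$, where $\mathcal{L}_1 f(x) = \tfrac{1}{2}x(1-x)f''(x)$ is the classical Wright--Fisher generator, so that closability on $C([0,1])$ and generation of a Feller semigroup are transported from $\mathcal{L}_1$ to $\mathcal{L}_\alpha$ by a positive scalar multiplication, with a corresponding time rescaling $T_\alpha(t) = T_1(t/\alpha)$ of the semigroup. It therefore suffices to deal with the canonical case $\alpha=1$.

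The key structural checks I would carry out on $\mathcal{L}_1$ are standard. The domain $C^2([0,1])$ is dense in $C([0,1])$. The operator satisfies the positive maximum principle: if $f\in C^2([0,1])$ attains its maximum at $x_0\in[0,1]$, then either $x_0\in(0,1)$, in which case $f''(x_0)\le 0$ and $x_0(1-x_0)>0$ give $\mathcal{L}_1 f(x_0)\le 0$, or $x_0\in\{0,1\}$, in which case the coefficient $x_0(1-x_0)$ vanishes and $\mathcal{L}_1 f(x_0)=0$. This is the dissipativity required by the Hille--Yosida--Ray characterization of Feller generators. It remains to verify the range condition: for some $\lambda>0$, $\mathrm{Range}(\lambda I-\mathcal{L}_1)$ is dense in $C([0,1])$. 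Because $\mathcal{L}_1$ preserves polynomial degree, for every polynomial $g$ the equation $\lambda f - \tfrac{1}{2}x(1-x)f''=g$ admits a polynomial solution $f$ obtained by solving a triangular linear system (invertible thanks to the $\lambda$-shift), and by the Weierstrass approximation theorem the polynomials are dense in $C([0,1])$, so the range condition follows.

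The main obstacle, which is precisely what the cited references \cite[Theorem~8.2.8]{EK} and \cite[Theorem~6.2.6]{Alt-C} settle in full generality, is the degeneracy of the diffusion coefficient $x(1-x)$ at the endpoints: one must identify the actual domain $\mathrm{Dom}(\mathcal{L}_\alpha)$ of the closure and justify that no explicit boundary condition has to be imposed (the Feller boundary classification yields entrance/exit boundaries for which the natural behavior is automatic). Since this degenerate-coefficient situation is the standard one-dimensional setup treated in the cited theorems, the lemma follows by invoking them directly with the rescaling remark from the first paragraph.
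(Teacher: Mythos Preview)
Your proposal is correct and aligned with the paper: the paper gives no proof at all for this lemma, merely citing \cite[Theorem~8.2.8]{EK} and \cite[Theorem~6.2.6]{Alt-C} as a known result. Your sketch goes further by outlining the Hille--Yosida verification (positive maximum principle, range condition via the degree-preserving action of $\mathcal{L}_1$ on polynomials) and the reduction to $\alpha=1$ by time rescaling, all of which is sound and is precisely the content encapsulated in those references.
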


We prove that the appropriately scaled operators $(B_{\alpha, n}^{\lfloor nt \rfloor})_{n \in \N_0}$, $t \ge 0$, defined by the iterates of $B_{\alpha,n}$ converges to
$(T_\alpha(t))_{t \ge 0}$ as follows. 

\begin{tm}
\label{Thm:convergence-fractional-Bernstein-1}
    Let $\alpha>0$. Then, we have 
        \begin{equation}\label{Eq:semigroup-uniform-convergence}
            \lim_{n \to \infty} \|B_{\alpha,n}^{\lfloor nt \rfloor}f
            - T_\alpha(t)f\|_\infty=0, \qquad f \in C([0, 1]), \, t \ge 0.
        \end{equation}
\end{tm}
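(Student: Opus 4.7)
The plan is to invoke the Trotter approximation theorem for semigroups in its discrete-time-step form (cf.~\cite{Trotter}; see also \cite[Chapter~1, Theorem~6.5]{EK}), which is the route already signaled in the introduction. In abstract terms, this theorem asserts that if $(T_n)_{n \in \N}$ are linear contractions on a Banach space $X$, if $(T(t))_{t \ge 0}$ is a strongly continuous contraction semigroup on $X$ with generator $(A, \mathrm{Dom}(A))$, and if there exists a core $D \subset \mathrm{Dom}(A)$ on which $n(T_n - I) f \to A f$ as $n \to \infty$ for each $f \in D$, then for every $f \in X$ and $t \ge 0$,
\[
\lim_{n \to \infty} \sup_{s \in [0, t]} \|T_n^{\lfloor ns \rfloor} f - T(s) f\|=0.
\]
I would apply this with $X = C([0, 1])$, $T_n = B_{\alpha, n}$, $(T(t))_{t \ge 0} = (T_\alpha(t))_{t \ge 0}$, and $D = C^2([0, 1])$; specializing the uniform-in-$s$ conclusion to $s = t$ gives \eqref{Eq:semigroup-uniform-convergence}.

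Three hypotheses need checking. First, contractivity of $B_{\alpha, n}$ is immediate: since $\mu_{\alpha, x}^{(n)}$ is a probability measure for every $x \in [0, 1]$, the operator $B_{\alpha, n}$ is positive and satisfies $B_{\alpha, n} 1 = 1$, whence $\|B_{\alpha, n} f\|_\infty \le \|f\|_\infty$. Second, $(T_\alpha(t))_{t \ge 0}$ is a Feller semigroup whose generator is the closure of $(\mathcal{L}_\alpha, C^2([0, 1]))$ by Lemma~\ref{Lem:Altomare-Campiti}; by the very definition of closure, $C^2([0, 1])$ is then a core for that generator. Third, the generator convergence
\[
\lim_{n \to \infty}\bigl\|n(B_{\alpha, n} - I) f - \mathcal{L}_\alpha f\bigr\|_\infty = 0
\qquad \text{for every } f \in C^2([0, 1])
\]
is precisely the content of the preceding Voronovskaya-type Lemma~\ref{Lem:Voronovskaya-Bernstein}. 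Once these three items are in hand, Trotter's theorem delivers \eqref{Eq:semigroup-uniform-convergence} directly.

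The only real work, and therefore the main obstacle, is located in Lemma~\ref{Lem:Voronovskaya-Bernstein} rather than in the present theorem. Its proof would proceed from the second-order Taylor expansion
\[
f(j/n) = f(x) + f'(x)(j/n - x) + \tfrac{1}{2} f''(x)(j/n - x)^2 + R_2(x; j/n),
\]
which, upon integration against $\mu_{\alpha, x}^{(n)}$ and multiplication by $n$, yields three contributions: the first-moment term, which is $O(e^{-\delta n})$ by Corollary~\ref{Cor:Mean-variance-fractional-binomial}; the second-moment term, which converges to $\tfrac{1}{2\alpha} x(1-x) f''(x) = \mathcal{L}_\alpha f(x)$ by the same corollary; and the remainder. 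The delicate point is the uniform-in-$x \in [0, 1]$ control of $n$ times the remainder, which I would handle in the standard way by splitting $\sum_{j} \mu_{\alpha, x}^{(n)}(\{j\}) |R_2(x; j/n)|$ according to whether $|j/n - x|$ is below or above a small $\delta > 0$, dominating the near part by the modulus of continuity $\omega_{f''}(\delta)$ times the second moment and the far part by $\|f''\|_\infty \delta^{-2}$ times the fourth moment, the latter being $O(1/n^2)$ by Corollary~\ref{Cor:Moment-estimate}; sending $n \to \infty$ first and then $\delta \to 0$ closes the argument.
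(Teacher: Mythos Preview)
Your overall architecture---verify contractivity, identify a core, prove the Voronovskaya-type generator convergence on that core, and invoke Trotter---is exactly the paper's. The gap is in your choice of core and your reading of the two lemmas you cite.

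You take $D = C^2([0,1])$ and assert that Lemma~\ref{Lem:Voronovskaya-Bernstein} yields $n(B_{\alpha,n}-I)f\to\mathcal{L}_\alpha f$ uniformly for every $f\in C^2([0,1])$. But that lemma is stated and proved only for $f\in\mathcal{D}_0=\{f\in C^2([0,1])\mid f'(0)=f'(1)=0\}$, and the restriction is essential. In your sketch of its proof you say the first-moment term $n f'(x)\,\E[S_{\alpha,x}^{(n)}/n - x]$ is $O(e^{-\delta n})$ by Corollary~\ref{Cor:Mean-variance-fractional-binomial}; however, the $O(e^{-\delta n})$ there is, as the paper states explicitly just before \eqref{Eq:Order-normalized-constant}, uniform only on compact subsets of $x\in(0,1)$. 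On the full interval one has merely the $O(1/n)$ bound of Corollary~\ref{Cor:Moment-estimate}, so $n\,\E[S_{\alpha,x}^{(n)}/n - x]$ is bounded but does \emph{not} tend to $0$ uniformly near the endpoints (indeed, along $x=c/n$ the error terms $\II_{\alpha,1}^{(n)}$, $\III_{\alpha,1}^{(n)}$ in the proof of Theorem~\ref{Thm:Unifrom-moment-estimate} have nonzero limits). Hence for $f$ with $f'(0)\ne0$ the Voronovskaya convergence fails in sup norm. The same boundary issue afflicts the second-moment term through the normalizing constant: $Z_{\alpha,x}^{(n)}\to 1$ only on compacts of $(0,1)$ (recall $Z_{\alpha,0}^{(n)}=\alpha$), so $\E[(S/n-x)^2]$ picks up a factor $1/Z$ that does not converge to $1$ uniformly; the paper neutralizes this via the extra $x(1-x)$ factor, which you do not isolate.

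The paper's remedy is to prove Lemma~\ref{Lem:Voronovskaya-Bernstein} only on $\mathcal{D}_0$---using $|f'(x)|<\varepsilon$ near the endpoints to kill the first-moment contribution there---and then to supply the additional Lemma~\ref{Lem:core} showing that $\mathcal{D}_0$ is a core for $(\mathcal{L}_\alpha,\mathrm{Dom}(\mathcal{L}_\alpha))$, after which \cite[Proposition~1.3.1]{EK} gives the range density needed for Trotter. Your argument, as written, omits this boundary analysis and the core lemma, and without them the generator-convergence hypothesis of Trotter's theorem is not established.
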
 
    
In order to prove Theorem~\ref{Thm:convergence-fractional-Bernstein-1}, 
we prove the \textit{Voronovskaya-type theorem} which asserts the convergence 
of the sequence $\{n(B_{\alpha, n} -I)\}_{n=1}^\infty$ to $\mathcal{L}_\alpha$, 
where $I$ denotes the identity operator on $C([0,1])$. 
Let us put
    \[
        \mathcal{D}_0 := \{f \in C^2([0, 1]) \mid f'(0)=f'(1)=0\}.
    \]

\begin{lm}\label{Lem:Voronovskaya-Bernstein}
    Let $K$ be a compact subset of $(0, \infty)$. 
    Then, we have 
    \begin{equation}\label{Eq:Voronovskaya-Bernstein}
        \lim_{n \to \infty} \sup_{\alpha \in K}\| n(B_{\alpha, n} - I)f - \mathcal{L}_\alpha f\|_\infty=0,
        \qquad f \in \mathcal{D}_0. 
    \end{equation}
\end{lm}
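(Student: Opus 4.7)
The plan is to Taylor-expand $f$ to second order about each $x$ and match the expansion against the moment asymptotics for $\mu_{\alpha,x}^{(n)}$ derived earlier. Writing
\[
f(y) = f(x) + f'(x)(y-x) + \tfrac{1}{2}f''(x)(y-x)^2 + \rho(y,x)(y-x)^2,
\]
with $|\rho(y,x)| \le \tfrac{1}{2}\omega(|y-x|)$, where $\omega$ denotes the modulus of continuity of $f''$ on $[0,1]$, and integrating against $\mu_{\alpha,x}^{(n)}$ yields the decomposition
\[
n(B_{\alpha,n}-I)f(x) - \mathcal{L}_\alpha f(x) = nf'(x)\,S_1(x) + \tfrac{1}{2}f''(x)\Bigl[nS_2(x) - \tfrac{x(1-x)}{\alpha}\Bigr] + nE_n(x),
\]
where $S_m(x) := \sum_{j=0}^n \mu_{\alpha,x}^{(n)}(\{j\})(j/n - x)^m$ and $E_n(x)$ collects the $\rho$-remainder. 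The task is to show each of the three pieces tends to $0$ uniformly on $K \times [0,1]$.

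The remainder $nE_n$ is handled by the standard split: fix $\varepsilon>0$, pick $\delta>0$ with $\omega(\delta)<\varepsilon$, and bound $(j/n-x)^2 \le (j/n-x)^4/\delta^2$ on $\{|j/n-x| \ge \delta\}$ to get $|nE_n(x)| \le \tfrac{\varepsilon}{2}\,nS_2(x) + \|f''\|_\infty \delta^{-2}\,nS_4(x)$. Corollary~\ref{Cor:Moment-estimate} together with the uniform lower bound on $Z_{\alpha,x}^{(n)}$ from Theorem~\ref{Thm:Uniform-bound-normalized-constant} gives $\sup nS_2 = O(1)$ and $\sup nS_4 = O(1/n)$ on $K \times [0,1]$, so $nE_n \to 0$ uniformly. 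For the $f''$-term, Theorem~\ref{Thm:Unifrom-moment-estimate} with $m=2$, combined with \eqref{Eq:Examples-leading-terms}, yields uniformly
\[
\alpha \sum_{j=0}^n \binom{\alpha n}{\alpha j}x^{\alpha j}(1-x)^{\alpha(n-j)}(j/n-x)^2 = \tfrac{x(1-x)}{\alpha n} + O(1/n^2);
\]
dividing by $Z_{\alpha,x}^{(n)}$ and rearranging,
\[
nS_2(x) - \tfrac{x(1-x)}{\alpha} = \tfrac{x(1-x)}{\alpha}\cdot\tfrac{1-Z_{\alpha,x}^{(n)}}{Z_{\alpha,x}^{(n)}} + O(1/n).
\]
The auxiliary fact needed is $\sup_{x \in [0,1]} x(1-x)\,|1 - Z_{\alpha,x}^{(n)}| = O(1/n)$ uniformly in $\alpha \in K$: bounding the $\omega \ne 1$ contribution to $Z-1$ by $\#(K_\alpha)\exp(-c\,nx(1-x)/\alpha)$ as in the proof of Theorem~\ref{Thm:Uniform-bound-normalized-constant}, and the integral contribution by $[(1-x)^{\alpha n}+x^{\alpha n}]\psi(\alpha)$ via Lemma~\ref{Lem:Estimates-integral-terms}, each resulting product $x(1-x)\cdot(\text{bound})$ is $O(1/n)$; for instance $x(1-x)e^{-c\,nx(1-x)/\alpha}$ is controlled by the elementary estimate $t\,e^{-ct} \le 1/(ec)$ with $t = nx(1-x)/\alpha$.

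The conceptually delicate piece is $T_1(x) := nf'(x)S_1(x)$: Theorem~\ref{Thm:Unifrom-moment-estimate} with $m=1$ (whose main term vanishes by \eqref{Eq:Examples-leading-terms}) only gives $|nS_1(x)| \le C$ uniformly, so the smallness must come from $f'$ itself. This is exactly where the Neumann condition $f'(0)=f'(1)=0$ built into $\mathcal{D}_0$ enters. Given $\varepsilon>0$, continuity of $f'$ at the endpoints furnishes $\eta>0$ with $|f'(x)|<\varepsilon/(2C)$ on $[0,\eta]\cup[1-\eta,1]$, making $|T_1(x)|<\varepsilon/2$ there automatically. On the compact middle $[\eta,1-\eta]$, Proposition~\ref{Prop:Order-moment-fractional-binomial} with $c=-\alpha n x$ and $m=1$ gives the sharper uniform bound $\alpha \sum_j (j/n-x)\binom{\alpha n}{\alpha j}x^{\alpha j}(1-x)^{\alpha(n-j)} = O(e^{-\delta n})$ with $\delta>0$ uniform on $K \times [\eta,1-\eta]$, so $nS_1 \to 0$ exponentially and $|T_1(x)|<\varepsilon/2$ there as well for large $n$. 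Combining the two regimes and summing the three contributions proves the lemma. The main obstacle, beyond the bookkeeping for the second moment near the boundary, is precisely the interplay between the $f'$-term and the Neumann boundary condition; without $f'(0)=f'(1)=0$, one obtains only pointwise (not uniform) convergence on $[0,1]$.
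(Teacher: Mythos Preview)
Your argument is correct and follows essentially the same route as the paper: the same second-order Taylor decomposition into a first-moment term (handled via the Neumann condition $f'(0)=f'(1)=0$ and an interior/boundary split), a second-moment term (handled by Theorem~\ref{Thm:Unifrom-moment-estimate} and \eqref{Eq:Examples-leading-terms}), and a remainder controlled by the fourth moment from Corollary~\ref{Cor:Moment-estimate}. The only cosmetic difference is that the paper isolates the factor $(1/Z_{\alpha,x}^{(n)}-1)\,x(1-x)$ as a separate term and bounds it by an $\varepsilon$-splitting argument near the endpoints, whereas you prove the slightly sharper quantitative estimate $\sup_{\alpha\in K,\,x\in[0,1]} x(1-x)\,|1-Z_{\alpha,x}^{(n)}|=O(1/n)$ directly from Lemma~\ref{Lem:Estimates-integral-terms} and the bound on $\sum_{\omega\in K_\alpha\setminus\{1\}}(1-x+x\omega)^{\alpha n}$; both routes work.
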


\begin{proof}
    Let $f \in \mathcal{D}_0$. 
    We apply \eqref{Eq:fractional-Bernstein-expression} and Taylor's formula to obtain
    \begin{align}
        &n(B_{\alpha, n} - I)f(x)-\mathcal{L}_\alpha f(x) \nn \\
        &=n \E\left[f\left(\frac{1}{n}S_{\alpha, x}^{(n)}\right)-f(x)\right]
        -\mathcal{L}_\alpha f(x) \nn \\
        &= nf'(x)\E\left[ \frac{1}{n}S_{\alpha, x}^{(n)}-x \right] \nn \\
        &\quad+n \E\Bigg[\int_0^1 (1-t)
        \left\{f''\left(x+t\left(\frac{1}{n}S_{\alpha, x}^{(n)}-x\right)\right)-f''(x) \right\}
        \left(\frac{1}{n}S_{\alpha, x}^{(n)}-x\right)^2 \, \dd t\Bigg] \nn \\
        &\quad
        +\frac{n}{2Z_{\alpha, x}^{(n)}}
        \left\{\alpha \sum_{j=0}^n \binom{\alpha n}{\alpha j}
        x^{\alpha j}(1-x)^{\alpha(n-j)}\left(\frac{j}{n}-x\right)^2 - \frac{1}{\alpha n}x(1-x)\right\}f''(x) \nn \\
        &\quad+\frac{1}{2\alpha}
        \left(\frac{1}{Z_{\alpha, x}^{(n)}}-1\right)x(1-x)f''(x) \nn \\
        &=: \I_{\alpha}^{(n)}(x)+\II_{\alpha}^{(n)}(x)
        +\III_\alpha^{(n)}(x)
        + \IV_\alpha^{(n)}(x).
        \label{Eq:difference of generators-0}
    \end{align}
    Let $\ve>0$. 
    By assumption, there exists some $r>0$ such that 
    \[
        |f'(x)|<\ve, \qquad x \in [0, r] \cup [1-r, 1]. 
    \]
    Then, it follows from Proposition~\ref{Prop:Order-moment-fractional-binomial},
    Theorem~\ref{Thm:Uniform-bound-normalized-constant},
    and Corollary~\ref{Cor:Moment-estimate} that 
    \begin{align}\label{Eq:difference of generators-1}
        \left| \I_\alpha^{(n)}(x)\right| 
        &\le n\max_{x \in [0, r] \cup [1-r, 1]}
        \left|\frac{\alpha}{Z_{\alpha, x}^{(n)}}  
        \sum_{j=0}^n \binom{\alpha n}{\alpha j}
        x^{\alpha j}(1-x)^{\alpha(n-j)} 
        \left(\frac{j}{n}-x\right)\right||f'(x)|  \nn \\
        &\quad+n\|f'\|_\infty\max_{x \in [r, 1-r]}
        \left|\frac{\alpha}{Z_{\alpha, x}^{(n)}} 
        \sum_{j=0}^n \binom{\alpha n}{\alpha j}
        x^{\alpha j}(1-x)^{\alpha(n-j)} 
        \left(\frac{j}{n}-x\right)\right| \nn \\
        &\le n \left(\sup_{n \in \N}\sup_{\alpha \in K}\max_{x \in [0, 1]}
        \frac{1}{Z_{\alpha, x}^{(n)}}\right) 
        \sup_{\alpha \in K}\max_{x \in [0, 1]}
        \left|\alpha\sum_{j=0}^n \binom{\alpha n}{\alpha j}
        x^{\alpha j}(1-x)^{\alpha(n-j)} 
        \left(\frac{j}{n}-x\right)\right| \ve \nn \\
        &\quad+n\|f'\|_\infty  \left(\sup_{n \in \N}\sup_{\alpha \in K}\max_{x \in [0, 1]}\frac{1}{Z_{\alpha, x}^{(n)}}\right)
        O(e^{-\delta n}) \nn \\
        &= \ve O(1) + n O(e^{-\delta n}) 
        = \ve O(1)+o(1)
    \end{align}
    as $n \to \infty$.
    Because $f''$ is uniformly continuous on $[0, 1]$, 
    there exists some $\rho=\rho(\ve) \in (0, 1)$ such that 
    $|x-y|<\rho$ implies $|f''(x) - f''(y)|<\ve$. 
    Hence, Theorem~\ref{Thm:Uniform-bound-normalized-constant} and Corollary~\ref{Cor:Moment-estimate} 
    allow us to obtain that
    \begin{align}
        &\left|\II_\alpha^{(n)}(x)\right| \nn \\
        &\le \frac{n\ve}{2}\max_{x \in [0, 1]}\E\left[\left(\frac{1}{n}S_{\alpha, x}^{(n)}-x\right)^2\right] 
        +\frac{n\|f''\|_\infty}{\rho^2}
        \max_{x \in [0, 1]}\E\left[\left(\frac{1}{n}S_{\alpha, x}^{(n)}-x\right)^4\right] \nn \\ 
        &\le \frac{n\ve}{2}  \left(\sup_{n \in \N}\sup_{\alpha \in K}\max_{x \in [0, 1]}
        \frac{1}{Z_{\alpha, x}^{(n)}}\right)
        \sup_{\alpha \in K}\max_{x \in [0, 1]}
        \left| \alpha \sum_{j=0}^n \binom{\alpha n}{\alpha j}
        x^{\alpha j}(1-x)^{\alpha(n-j)}\left(\frac{j}{n}-x\right)^2\right| \nn \\
        &\quad
        +\frac{n\|f''\|_\infty}{\rho^2} \left(\sup_{n \in \N}\sup_{\alpha \in K}\max_{x \in [0, 1]}
        \frac{1}{Z_{\alpha, x}^{(n)}}\right)
        \sup_{\alpha \in K}\max_{x \in [0, 1]}
        \left| \alpha \sum_{j=0}^n \binom{\alpha n}{\alpha j}
        x^{\alpha j}(1-x)^{\alpha(n-j)}\left(\frac{j}{n}-x\right)^4\right| \nn \\
        &= n\ve O\left(\frac{1}{n}\right)
        +\frac{n\|f''\|_\infty}{\rho^2} O\left(\frac{1}{n^2}\right)  \nn \\
        &=\ve O(1)+O\left(\frac{1}{n}\right)
        \label{Eq:difference of generators-2}
    \end{align}
    as $n \to \infty$. 
    Furthermore, by applying Theorems~\ref{Thm:Uniform-bound-normalized-constant} 
    and \ref{Thm:Unifrom-moment-estimate}, we have 
    \begin{align}
        \left|\III_\alpha^{(n)}(x)\right| 
        &\le \frac{n}{2}\left(\sup_{n \in \N}\sup_{\alpha \in K}\max_{x \in [0, 1]}\frac{1}{Z_{\alpha, x}^{(n)}}\right) 
        \|f''\|_\infty O\left(\frac{1}{n^2}\right) 
        = O\left(\frac{1}{n}\right)
        \label{Eq:difference of generators-3}
    \end{align}
    as $n \to \infty$. 
    On the other hand, for $\ve>0$, $x(1-x)<\ve$ for $x \in [0, \ve] \cup [1-\ve, 1]$. 
    Then, we have
    \begin{align}
        \left|\IV_\alpha^{(n)}(x)\right| 
        &\le \frac{\ve \|f''\|_\infty}{2\alpha}\max_{x \in [0, \ve] \cup [1-\ve, 1]}
        \left|\frac{1}{Z_{\alpha, x}^{(n)}}-1\right|
        +\frac{\|f''\|_\infty}{8\alpha}\max_{x \in [\ve, 1-\ve]}
        \left|\frac{1}{Z_{\alpha, x}^{(n)}}-1\right| \nn \\
        &\le \frac{\ve \|f''\|_\infty}{2\alpha}
        \sup_{n \in \N}\sup_{\alpha \in K}\max_{x \in [0, 1]}
        \left|\frac{1}{Z_{\alpha, x}^{(n)}}-1\right|
        +\frac{\|f''\|_\infty}{8\alpha}\sup_{\alpha \in K}\max_{x \in [\ve, 1-\ve]}
        \left|\frac{1}{Z_{\alpha, x}^{(n)}}-1\right| \nn \\
        &=\ve O(1)+o(1)
        \label{Eq:difference of generators-4}
    \end{align}
    as $n \to \infty$, by virtue of Proposition~\ref{Prop:Order-moment-fractional-binomial}~(1) and Theorem~\ref{Thm:Uniform-bound-normalized-constant}. 
    Therefore, by combining \eqref{Eq:difference of generators-0} 
    with \eqref{Eq:difference of generators-1}, 
    \eqref{Eq:difference of generators-2}, \eqref{Eq:difference of generators-3}, 
    and \eqref{Eq:difference of generators-4}, we obtain
    \[
        \sup_{\alpha \in K}\|n(B_{\alpha, n}-I)f-\mathcal{L}_\alpha f\|_\infty = \ve O(1)+o(1)
    \]
    as $n \to \infty$. Because $\ve>0$ is arbitrary, 
    we conclude \eqref{Eq:Voronovskaya-Bernstein} after letting $\ve \searrow 0$. 
\end{proof}

\begin{lm}\label{Lem:core}
    $\mathcal{D}_0$ is a core for $(\mathcal{L}_\alpha, \mathrm{Dom}(\mathcal{L}_\alpha))$. 
\end{lm}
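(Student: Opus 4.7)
The plan is to verify the two conditions of the Lumer--Phillips theorem applied to $\mathcal{L}_\alpha|_{\mathcal{D}_0}$: namely, that $\mathcal{D}_0$ is $\|\cdot\|_\infty$-dense in $C([0,1])$ and that $(\lambda I - \mathcal{L}_\alpha)(\mathcal{D}_0)$ is $\|\cdot\|_\infty$-dense in $C([0,1])$ for some $\lambda > 0$. Dissipativity of $\mathcal{L}_\alpha|_{\mathcal{D}_0}$ is automatic since it is a restriction of the Feller generator from Lemma~\ref{Lem:Altomare-Campiti}. Once both density conditions are established, the closure of $\mathcal{L}_\alpha|_{\mathcal{D}_0}$ generates a contraction semigroup, and because this closure is contained in the known generator $\mathcal{L}_\alpha$, uniqueness of contraction-semigroup generators will force it to equal $\mathcal{L}_\alpha$, which is exactly the core property.

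The first density is straightforward: the cosines $\{\cos(k\pi x)\}_{k \in \N_0}$ all lie in $\mathcal{D}_0$ (their derivatives vanish at $x = 0$ and $x = 1$) and their linear span is uniformly dense in $C([0,1])$ by Weierstrass applied to the even $2$-periodic extension of a continuous function on $[0,1]$.

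For the dense-range condition I would use Hahn--Banach duality: it suffices to show that any finite signed Borel measure $\mu$ on $[0,1]$ with $\int (\lambda f - \mathcal{L}_\alpha f)\,d\mu = 0$ for every $f \in \mathcal{D}_0$ must vanish. Testing first against $f \in C_c^\infty((0,1)) \subset \mathcal{D}_0$ and integrating by parts twice (the boundary terms vanish because $x(1-x)$ is zero at the endpoints) yields the distributional identity $\lambda \mu = \bigl(\tfrac{x(1-x)}{2\alpha} \mu\bigr)''$ on $(0,1)$; standard elliptic regularity for this Fuchsian operator upgrades $\mu$ to an absolutely continuous measure on $(0,1)$ with a smooth density $\rho$ solving
\[
\frac{x(1-x)}{2\alpha}\rho''(x) + \frac{1-2x}{\alpha}\rho'(x) - \Bigl(\lambda + \frac{1}{\alpha}\Bigr)\rho(x) = 0
\quad \text{on } (0,1).
\]
The crucial additional leverage of $\mathcal{D}_0$ beyond $C_c^\infty((0,1))$ is that its elements may carry arbitrary boundary values: by testing next against the cubic Hermite interpolants $p_0(x) = 1 - 3x^2 + 2x^3$ and $p_1(x) = 3x^2 - 2x^3$ (both lying in $\mathcal{D}_0$ with $p_i(j) = \delta_{ij}$) and tracking the boundary contributions from a second application of integration by parts, one extracts the Dirichlet data $\rho(0) = \rho(1) = 0$. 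Finally, the indicial analysis of the Fuchsian ODE at the regular singular endpoint $x = 0$ gives exponents $\{0, -1\}$; local integrability rules out the $\sigma = -1$ branch and $\rho(0) = 0$ rules out the $\sigma = 0$ branch, so the Taylor coefficients of $\rho$ at $0$ vanish order by order, and ODE uniqueness propagates to $\rho \equiv 0$ on $[0,1]$.

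The principal obstacle is the regularity bootstrap from the distributional Fuchsian equation to a smooth density, together with the careful bookkeeping of boundary contributions at the degenerate endpoints during the two integrations by parts; the real conceptual content is the observation that the flexibility of $\mathcal{D}_0$ to prescribe boundary values $f(0), f(1)$ (beyond the Neumann condition $f'(0) = f'(1) = 0$) is exactly what converts the bulk distributional equation into a Dirichlet-type boundary-value problem admitting only the trivial solution.
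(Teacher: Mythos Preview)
Your approach is genuinely different from the paper's. The paper argues directly: since $C^2([0,1])$ is already known to be a core, it suffices to approximate each $f\in C^2([0,1])$ in the graph norm of $\mathcal{L}_\alpha$ by elements of $\mathcal{D}_0$. They do this by adding to $f$, on $[0,\ve)$ and $(1-\ve,1]$, the second antiderivative of an explicit one-parameter family of corrector functions; the degeneracy factor $x(1-x)$ in $\mathcal{L}_\alpha$ is precisely what keeps $\|\mathcal{L}_\alpha(f_\ve-f)\|_\infty$ small even though $f_\ve''-f''$ is unbounded. Your route via Lumer--Phillips and a dual annihilator argument is more abstract and leans on ODE/Frobenius theory instead of an explicit construction.

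There is, however, a real gap in your argument. When you pass from the distributional identity on $(0,1)$ to an absolutely continuous $\mu$ with smooth density $\rho$, you have only shown this on the open interval; nothing prevents $\mu$ from carrying atoms $c_0\delta_0+c_1\delta_1$ at the endpoints, and the compactly supported test functions do not see them. If you then integrate by parts against $p_0,p_1\in\mathcal{D}_0$ while tracking those atoms, the boundary identity you actually obtain is
\[
\lambda c_0=\frac{\rho(0)}{2\alpha},\qquad \lambda c_1=\frac{\rho(1)}{2\alpha},
\]
not $\rho(0)=\rho(1)=0$. So testing against $p_0,p_1$ does not by itself force the Dirichlet data to vanish, and your final indicial step (``$\rho(0)=0$ rules out the $\sigma=0$ branch'') does not get off the ground.

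The fix is to argue differently: integrability of $\rho$ near \emph{both} endpoints already forces $\rho$ onto the regular Frobenius branch at $x=0$ and at $x=1$ simultaneously. Your ODE is hypergeometric with parameters $c=2$, $a+b=3$, $ab=2\alpha\lambda+2$; for $\lambda>0$ neither $a$ nor $b$ is a nonpositive integer, so by the connection formula the solution ${}_2F_1(a,b;2;x)$ regular at $0$ has a genuine $(1-x)^{-1}$ singularity at $1$. Hence the two regular branches meet only in $\{0\}$, giving $\rho\equiv 0$, and then the boundary relations above yield $c_0=c_1=0$, so $\mu=0$. With this correction your strategy goes through, but as written the step ``extracts the Dirichlet data $\rho(0)=\rho(1)=0$'' is incorrect.
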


\begin{proof}
    Because $C^2([0, 1])$ is a core for $(\mathcal{L}_\alpha, \mathrm{Dom}(\mathcal{L}_\alpha))$,
    it is sufficient to show that for $f \in C^2([0, 1])$,   
    there exists a family $(f_\ve)_{\ve>0} \subset \mathcal{D}_0$ such that 
    \begin{equation}\label{Eq:core-aim}
        \lim_{\ve \searrow 0}
        (\|\mathcal{L}_\alpha(f_\ve-f)\|_\infty + \|f_\ve-f\|_\infty)=0.
    \end{equation}
    In the sequel, we assume that 
    $f \in C^2([0, 1])$ satisfies $f'(0) \neq 0$ and $f'(1) \neq 0$. 
    If $f'(0)=0$ or $f'(1)=0$, then the proof is also valid by minor modification.
    Let $\ve>0$ be sufficiently small so that $\ve<1/2$ and 
    \begin{equation}\label{Eq:epsilon-condition}
        \ve\left(\log 2-\frac{1}{2}\right) < \min\{|f'(0)|, |f'(1)|\}.
    \end{equation}
    For $\delta \in (0, \ve]$, we define 
    \begin{align*}
        g_\delta(x)&:=\frac{\ve}{x+\delta}-\frac{\ve}{\ve+\delta},
        \qquad x \in [0, \ve], \\
        \widehat{g}_\delta(x)&:=\frac{\ve}{1-x+\delta}-\frac{\ve}{\ve+\delta}, 
        \qquad x \in [1-\ve, 1]. 
    \end{align*}
    The function $g_\delta$ (resp.~$\widehat{g}_\delta$) is non-negative and 
    continuous on $[0, \ve]$ (resp.~$[1-\ve, 1]$) with $g_\delta(\ve)=0$ 
    (resp.~$\widehat{g}_\delta(1-\ve)=0$). 
    We also define 
    \begin{align*}
        G(\delta)&:=\int_0^\ve g_\delta(x) \, \dd x
        =\ve \log \frac{\ve+\delta}{\delta}-\frac{\ve^2}{\ve+\delta},
        \qquad \delta \in (0, \ve], \\
        \widehat{G}(\delta)
        &:=\int_{1-\ve}^1 \widehat{g}_\delta(x) \, \dd x
        =\ve \log \frac{\ve+\delta}{\delta}-\frac{\ve^2}{\ve+\delta}, 
        \qquad \delta \in (0, \ve].
    \end{align*}
    Then, both $G$ and $\widehat{G}$ 
    are continuous on $(0, \ve]$
    and satisfy that 
    \[
    \begin{aligned}
        G(0+)&=+\infty, &\qquad G(\ve)&=\ve \log 2-\frac{\ve}{2}<|f'(0)|, \\
        \widehat{G}(0+)&=+\infty, &\qquad \widehat{G}(\ve)&=\ve \log 2-\frac{\ve}{2}<|f'(1)|
    \end{aligned}
    \]
    due to \eqref{Eq:epsilon-condition}. 
    Hence, the mean value theorem implies the existence of 
    $\delta_0, \delta_1 \in (0, \ve)$ such that $G(\delta_0)=|f'(0)|$ and $\widehat{G}(\delta_1)=|f'(1)|$. 
    We now define a family of continuous functions $(f_\ve)_{\ve>0} \subset C([0, 1])$ by 
    \[
        f_\ve(x):=\begin{cases}
        f(x)+\{\mathrm{sgn}\,f'(0)\}\dis 
        \int_x^\ve \int_y^\ve g_{\delta_0}(s) \, \dd s \, \dd y
        & \text{if }x \in [0, \ve), \\
        f(x) & \text{if }x \in [\ve, 1-\ve], \\
        f(x)+\{\mathrm{sgn}\,f'(1)\}\dis 
        \int_{1-\ve}^x \int_{1-\ve}^y \widehat{g}_{\delta_1}(s) \, \dd s \, \dd y
        & \text{if }x \in (1-\ve, 1]. \\
    \end{cases}
    \]
    Then, we have 
    \[
        f_\ve'(x):=\begin{cases}
        f'(x)-\{\mathrm{sgn}\,f'(0)\}\dis 
        \int_x^\ve g_{\delta_0}(s) \, \dd s  
        & \text{if }x \in [0, \ve), \\
        f'(x) & \text{if }x \in (\ve, 1-\ve), \\
        f'(x)-\{\mathrm{sgn}\,f'(1)\}\dis 
        \int_{1-\ve}^x \widehat{g}_{\delta_1}(s) \, \dd s  
        & \text{if }x \in (1-\ve, 1]. \\
    \end{cases}
    \]
    In particular, we see that 
    \begin{align*}
        f'_\ve(0)&=f'(0)-\mathrm{sgn}\{f'(0)\}|f'(0)|=0, \\
        f'_\ve(1)&=f'(1)-\mathrm{sgn}\{f'(1)\}|f'(1)|=0, \\
        f'_\ve(\ve\pm)&=f'(\ve), \qquad
        f'_\ve((1-\ve)\pm)=f'(1-\ve).
    \end{align*}
    We also have
    \[
        f_\ve''(x):=\begin{cases}
        f''(x)+\{\mathrm{sgn}\,f'(0)\}\dis 
        g_{\delta_0}(x) 
        & \text{if }x \in [0, \ve), \\
        f''(x) & \text{if }x \in (\ve, 1-\ve), \\
        f''(x)-\{\mathrm{sgn}\,f'(1)\}\dis 
        \widehat{g}_{\delta_1}(x) 
        & \text{if }x \in (1-\ve, 1] \\
    \end{cases}
    \]
    and 
    \[
        f_\ve''(\ve\pm)=f''(\ve), \qquad f_\ve''((1-\ve)\pm)=f''(1-\ve).
    \]
    These mean that $f_\ve \in \mathcal{D}_0$ for $\ve>0$. 
    Furthermore, we obtain that
    \begin{align*}
        |\mathcal{L}_\alpha(f_\ve-f)(x)|
        &=\left|\frac{1}{\alpha}x(1-x)g_{\delta_0}(x)\right| \\
        &=\frac{1}{\alpha}x(1-x)  \frac{\ve(\ve-x)}{(x+\delta_0)(\ve+\delta_0)} 
        \le \frac{(1-x)(\ve-x)}{\alpha} \le \frac{\ve}{\alpha}
    \end{align*}
    for $x \in [0, \ve)$ and 
    \begin{align*}
        |\mathcal{L}_\alpha(f_\ve-f)(x)|
        &=\left|\frac{1}{\alpha}x(1-x)\widehat{g}_{\delta_1}(x)\right| \\
        &=\frac{1}{\alpha}x(1-x)  \frac{\ve\{\ve-(1-x)\}}{(1-x+\delta_1)(\ve+\delta_1)} 
        \le \frac{x\{\ve-(1-x)\}}{\alpha} \le \frac{\ve}{\alpha}
    \end{align*}
    for $x \in (1-\ve, 1]$. 
    On the other hand, it holds that $|\mathcal{L}_\alpha(f_\ve-f)(x)| =0$ 
    for $x \in [\ve, 1-\ve]$. Hence, we have
    \begin{equation}\label{Eq:L_alpha-estimate}
        \|\mathcal{L}_\alpha(f_\ve-f)\|_\infty \le \frac{\ve}{\alpha},
        \qquad \ve>0. 
    \end{equation}
    We also observe that 
    \begin{align*}
        |f_\ve(x)-f(x)| &= \int_x^\ve \int_y^\ve g_{\delta_0}(s) \, \dd s \, \dd y \\ 
        &\le \int_0^\ve \int_0^\ve g_{\delta_0}(s) \, \dd s \, \dd y 
        =\int_0^\ve G(\delta_0) \, \dd y=\ve|f'(0)|
    \end{align*}
    for $x \in [0, \ve)$ and 
    \begin{align*}
        |f_\ve(x)-f(x)| &= \int_{1-\ve}^x \int_{1-\ve}^y \widehat{g}_{\delta_1}(s) \, \dd s \, \dd y \\ 
        &\le \int_{1-\ve}^1 \int_{1-\ve}^1 \widehat{g}_{\delta_1}(s) \, \dd s \, \dd y 
        =\int_{1-\ve}^1 \widehat{G}(\delta_1) \, \dd y=\ve|f'(1)|
    \end{align*}
    for $x \in (1-\ve, 1]$. Because $|f_\ve(x)-f(x)|=0$ for $x \in [\ve, 1-\ve]$ by definition, 
    we obtain 
    \begin{equation}\label{Eq:f_epsilon-estimate}
        \|f_\ve-f\|_\infty \le \ve \max\{|f'(0)|, |f'(1)|\}. 
    \end{equation}
    Therefore, \eqref{Eq:core-aim} follows from \eqref{Eq:L_alpha-estimate} 
    and \eqref{Eq:f_epsilon-estimate}.
\end{proof}

We can now prove Theorem~\ref{Thm:convergence-fractional-Bernstein-1}.

\begin{proof}[Proof of Theorem~{\rm \ref{Thm:convergence-fractional-Bernstein-1}}]
    It follows from Lemma \ref{Lem:core} and \cite[Proposition~1.3.1]{EK} that 
    $\mathrm{Range}(\lambda - \mathcal{L}_\alpha|_{\mathcal{D}_0})$ is dense in $C([0, 1])$
    for some $\lambda>0$. 
    Then, by combining this fact with Lemma \ref{Lem:Voronovskaya-Bernstein}, 
    we can apply Trotter's approximation theorem (cf.~\cite[Theorem~5.2]{Trotter}) 
    to conclude \eqref{Eq:semigroup-uniform-convergence}.
\end{proof}

Finally, we mention that the limiting semigroup 
$(T_\alpha(t))_{t \ge 0}$ obtained 
in Theorem~\ref{Thm:convergence-fractional-Bernstein-1} 
has the following probabilistic interpretation.

    \begin{pr}\label{Prop:diffusion semigroup}
        For $\alpha>0$ and $x \in [0, 1]$, let $(X_{\alpha, x}(t))_{t \ge 0}$ 
        be the one-dimensional diffusion process
        that is a unique strong solution to the stochastic differential equation 
        \begin{equation}\label{Eq:SDE}
            \dd X_{\alpha, x}(t)
            =\sqrt{\frac{1}{\alpha}X_{\alpha, x}(t)(1-X_{\alpha, x}(t))} \, \dd W(t), 
            \qquad X_{\alpha, x}(0)=x,
        \end{equation}
        where $(W(t))_{t \ge 0}$ is standard one-dimensional Brownian motion. 
        Then, we have 
        \[
            T_\alpha(t) f(x)=\E\big[f\big(X_{\alpha, x}(t)\big)\big], 
            \qquad f \in C([0,1]), \, x \in [0,1], \, t \ge 0.
        \]
    \end{pr}       

\begin{proof}
    Because \eqref{Eq:SDE} has a unique strong solution,
    $(X_{\alpha, x}(t))_{t \ge 0}$ is the unique 
    solution to the martingale problem for $(\mathcal{L}_\alpha, \delta_x)$ for every $x \in [0, 1]$. 
    Moreover, we note that the diffusion coefficient $(2\alpha)^{-1}x(1-x)$ of $\mathcal{L}_\alpha$ is continuous and bounded on $[0, 1]$. 
    Then, we can apply \cite[Theorem~32.11]{Kallenberg} to conclude that
    \[
        \widetilde{T}_{\alpha}(t)f(x):=\E[f(X_{\alpha, x}(t))], 
        \qquad f \in C([0, 1]), \, x \in [0, 1], \, t \ge 0
    \]
    is a Feller semigroup on $C([0, 1])$ and $\mathcal{L}_\alpha$
    uniquely extends to the associated generator. 
    Because a Feller semigroup is uniquely determined by 
    its generator (cf.~\cite[Lemma~17.5]{Kallenberg}), we have 
    $T_\alpha(t)=\widetilde{T}_\alpha(t)$ for all $t \ge 0$. 
\end{proof}

\subsection{The fractional Bernstein operator with $\alpha=\alpha(x)$ being a function of $x$}
\label{Subsect:limit theorems for fractional Bernstein-2}

In this section, we consider the fractional Bernstein operator whose parameter $\alpha > 0$
is replaced with a function $\alpha=\alpha(x) \colon [0, 1] \to (0, \infty)$. 
In what follows, we assume that there exist some positive constants 
$\alpha_L>0$ and $\alpha_U>0$ such that 
    \begin{equation}\label{Eq:alpha-bound}
        (0<)\,\alpha_L \le \alpha(x) \le \alpha_U, \qquad x \in [0, 1]. 
    \end{equation}
Let $n \in \N$ and $x \in [0, 1]$. 
Then, the $\alpha(x)$-fractional binomial distribution 
    \[
        \mu_{\alpha(x), x}^{(n)}(\dd z)=
        \frac{\alpha(x)}{Z_{\alpha(x), x}^{(n)}}\sum_{j=0}^n \binom{\alpha(x)n}{\alpha(x)j}x^{\alpha(x)j}
        (1-x)^{\alpha(x)(n-j)}\delta_j(\dd z)
    \]
with \eqref{Eq:alpha-bound} turns out to have almost the same properties 
as those of $\mu_{\alpha, x}^{(n)}$ 
discussed in Section~\ref{Sect:fractional-binomial-distribution}.
Indeed, the fractional-order Taylor series 
(Proposition~\ref{Prop:Fractional-Taylor-series-HH}) is still valid 
if we replace $\alpha$ with $\alpha(x)$ satisfying \eqref{Eq:alpha-bound}. 
Hence, Theorems~\ref{Thm:Moment-fractional-binomial-distribution} and \ref{Thm:Unifrom-moment-estimate}, 
Proposition~\ref{Prop:Order-moment-fractional-binomial}, and
Corollaries~\ref{Cor:Mean-variance-fractional-binomial} and \ref{Cor:Moment-estimate} also hold for 
the $\alpha(x)$-fractional binomial distribution. 
In particular, Theorem~\ref{Thm:Uniform-bound-normalized-constant} and \eqref{Eq:alpha-bound} 
imply the existence of two constants 
$C_{4,1}>0$ and $C_{4,2}>0$ independent of $n \in \N$ such that 
    \[
        C_{4,1} \le Z_{\alpha(x), x}^{(n)} \le C_{4,2}, \qquad x \in [0, 1]. 
    \]

We now assume that $\alpha(x)$ is continuous in $x$. 
Let us consider the $\alpha(x)$-fractional Bernstein operator $B_{\alpha(\cdot), n}$ given by
        \begin{align*}
        B_{\alpha(\cdot), n}f(x)
        &=\int_{\R} f\left(\frac{z}{n}\right) \, \mu_{\alpha(x), x}^{(n)}(\dd z) \\
        &=\sum_{j=0}^n \frac{\alpha(x)}{Z_{\alpha(x), x}^{(n)}}
        \binom{\alpha(x)n}{\alpha(x)j}x^{\alpha(x)j}(1-x)^{\alpha(x)(n-j)}f\left(\frac{j}{n}\right)
        \end{align*}
for $f \in C([0, 1])$ and $x \in [0, 1]$. 
We also consider the second-order differential operator
    \[
        \mathcal{L}_{\alpha(\cdot)}f(x)=\frac{1}{2\alpha(x)}x(1-x)f''(x)
    \]
for $f \in C^2([0, 1])$ and $x \in [0, 1]$. 
Then, the operator $(\mathcal{L}_{\alpha(\cdot)}, C^2([0, 1]))$ is also 
closable on $C([0, 1])$, and its closure 
$(\mathcal{L}_{\alpha(\cdot)}, \mathrm{Dom}(\mathcal{L}_{\alpha(\cdot)}))$
generates the Feller semigroup $(T_{\alpha(\cdot)}(t))_{t \ge 0}$ 
on $C([0, 1])$, which is no longer the Wright--Fisher diffusion semigroup 
in general (see \cite[Theorems~1.6.11 and 6.2.6]{Alt-C}). 

By following the proof of Theorem \ref{Thm:convergence-fractional-Bernstein-1},
we also prove that the appropriately scaled operators $(B_{\alpha(\cdot), n}^{\lfloor nt \rfloor})_{n \in \N_0}$, $t \ge 0$,  defined by the iterates 
of $B_{\alpha(\cdot),n}$ converges to $(T_{\alpha(\cdot)}(t))_{t \ge 0}$.  

\begin{tm}
\label{Thm:convergence-fractional-Bernstein-2}
    Suppose that a function $\alpha=\alpha(x) \colon [0, 1] \to (0, \infty)$ is continuous. 
    Then, we have 
        \begin{equation}\label{Eq:semigroup-uniform-convergence-2}
            \lim_{n \to \infty}
            \|B_{\alpha(\cdot),n}^{\lfloor nt \rfloor}f- 
            T_{\alpha(\cdot)}(t)f\|_\infty=0, 
            \qquad f \in C([0, 1]), \, t \ge 0.
        \end{equation}
\end{tm}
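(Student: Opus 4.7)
The plan is to mirror the three-step proof of Theorem~\ref{Thm:convergence-fractional-Bernstein-1}: establish a Voronovskaya-type theorem for $B_{\alpha(\cdot),n}$, verify that $\mathcal{D}_0$ is a core for $(\mathcal{L}_{\alpha(\cdot)},\mathrm{Dom}(\mathcal{L}_{\alpha(\cdot)}))$, and then invoke Trotter's approximation theorem exactly as before. The central observation enabling this is that, since $\alpha(x)\in[\alpha_L,\alpha_U]$ with $K:=[\alpha_L,\alpha_U]$ a compact subset of $(0,\infty)$, every uniform-in-$\alpha$ estimate derived in Section~\ref{Sect:fractional-binomial-distribution} can be applied at each $x$ with $\alpha$ replaced pointwise by $\alpha(x)$.

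First I would prove the variable-parameter Voronovskaya lemma: for every $f\in\mathcal{D}_0$,
\[
\lim_{n\to\infty}\|n(B_{\alpha(\cdot),n}-I)f-\mathcal{L}_{\alpha(\cdot)}f\|_\infty=0.
\]
I would repeat the decomposition \eqref{Eq:difference of generators-0} at each fixed $x\in[0,1]$, treating $\alpha$ as the value $\alpha(x)$. The four terms $\I_{\alpha(x)}^{(n)}(x),\II_{\alpha(x)}^{(n)}(x),\III_{\alpha(x)}^{(n)}(x),\IV_{\alpha(x)}^{(n)}(x)$ are then bounded in exactly the same way as in the proof of Lemma~\ref{Lem:Voronovskaya-Bernstein}, using the uniform bounds $C_{4,1}\le Z_{\alpha(x),x}^{(n)}\le C_{4,2}$ noted in the text, the uniform moment estimates of Corollary~\ref{Cor:Moment-estimate} taken over the compact set $K$, and Theorem~\ref{Thm:Unifrom-moment-estimate}; since each of those suprema already runs over $\alpha\in K$, taking the supremum over $x\in[0,1]$ on the left-hand side amounts to the supremum over a subset of $K$ that is automatically controlled. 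Continuity of $\alpha(\cdot)$ is only needed to guarantee that $\mathcal{L}_{\alpha(\cdot)}f\in C([0,1])$, i.e.\ that the limit is a continuous function.

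Next I would establish that $\mathcal{D}_0$ is a core for $(\mathcal{L}_{\alpha(\cdot)},\mathrm{Dom}(\mathcal{L}_{\alpha(\cdot)}))$, by following Lemma~\ref{Lem:core} verbatim. The functions $f_\ve\in\mathcal{D}_0$ are constructed identically (their definition depends only on $f$ itself, not on $\alpha$). The estimate for $\mathcal{L}_{\alpha(\cdot)}(f_\ve-f)$ near the boundary inherits an extra factor $\alpha(x)^{-1}$ in place of $\alpha^{-1}$, and this is uniformly bounded by $\alpha_L^{-1}$ thanks to \eqref{Eq:alpha-bound}; the pointwise bound $\|f_\ve-f\|_\infty\le\ve\max\{|f'(0)|,|f'(1)|\}$ is unchanged. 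Thus \eqref{Eq:core-aim} still holds for $\mathcal{L}_{\alpha(\cdot)}$, and the argument using \cite[Proposition~1.3.1]{EK} yields that $\mathrm{Range}(\lambda-\mathcal{L}_{\alpha(\cdot)}|_{\mathcal{D}_0})$ is dense in $C([0,1])$ for some $\lambda>0$.

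With the Voronovskaya lemma and the density of the range in hand, Trotter's approximation theorem delivers \eqref{Eq:semigroup-uniform-convergence-2} in precisely the same way as in the proof of Theorem~\ref{Thm:convergence-fractional-Bernstein-1}. The main obstacle, such as it is, lies not in Trotter or in the core argument but in verifying the Voronovskaya estimates without being able to pull $\alpha$ out of the supremum over $x$: one must check that the error terms involving $\I,\II,\III,\IV$ remain $o(1)$ uniformly in $x$, which reduces to observing that all uniform-in-$\alpha$ estimates of Section~\ref{Sect:fractional-binomial-distribution} were in fact proved with constants depending only on the compact parameter set, hence remain applicable when $\alpha=\alpha(x)$ is allowed to vary continuously within $K$.
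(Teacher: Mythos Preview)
Your proposal is correct and follows essentially the same route as the paper: the paper invokes Lemma~\ref{Lem:Voronovskaya-Bernstein} directly (noting that \eqref{Eq:alpha-bound} lets one absorb the pointwise choice $\alpha=\alpha(x)$ into the supremum over $\alpha\in K$), then reuses the approximating family $(f_\ve)$ from Lemma~\ref{Lem:core} together with the comparison $\|\mathcal{L}_{\alpha(\cdot)}(f_\ve-f)\|_\infty\le\|\mathcal{L}_{\alpha_L}(f_\ve-f)\|_\infty$ to get the core property, and concludes via Trotter. Your treatment is if anything slightly more careful, since you restrict the Voronovskaya step to $f\in\mathcal{D}_0$ (which is what the proof of Lemma~\ref{Lem:Voronovskaya-Bernstein} actually establishes) rather than stating it for all of $C^2([0,1])$.
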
 

\begin{proof}
    From Lemma~\ref{Lem:Voronovskaya-Bernstein}, 
    it holds that 
        \[
            \lim_{n \to \infty}\|n(B_{\alpha(\cdot), n}-I)f 
            - \mathcal{L}_{\alpha(\cdot)}f\|_\infty=0
        \]
    for $f \in C^2([0, 1])$, by noting \eqref{Eq:alpha-bound}.
    On the other hand, Lemma~\ref{Lem:core} tells us that 
    for $f \in C^2([0, 1])$, there exists 
    a family $(f_\ve)_{\ve>0} \subset \mathcal{D}_0$ satisfying \eqref{Eq:core-aim} with $\alpha$ replaced by $\alpha_L$.
    Then, \eqref{Eq:alpha-bound} leads to
        \[
            \|\mathcal{L}_{\alpha(\cdot)}(f_\ve-f)\|_\infty+\|f_\ve-f\|_\infty
            \le \|\mathcal{L}_{\alpha_L}(f_\ve-f)\|_\infty+\|f_\ve-f\|_\infty \to 0
        \]
    as $\ve \searrow 0$, which implies that $\mathcal{D}_0$ is also a core for 
    $(\mathcal{L}_{\alpha(\cdot)}, \mathrm{Dom}(\mathcal{L}_{\alpha(\cdot)}))$.
    Therefore, it follows from \cite[Proposition~1.3.1]{EK} 
    that $\mathrm{Range}(\lambda-\mathcal{L}_{\alpha(\cdot)}|_{\mathcal{D}_0})$
    is dense in $C([0, 1])$ for some $\lambda>0$. 
    Then, Trotter's approximation theorem allows us 
    to obtain \eqref{Eq:semigroup-uniform-convergence-2}. 
\end{proof}

Furthermore, we obtain a probabilistic interpretation 
of Theorem~\ref{Thm:convergence-fractional-Bernstein-2}
in terms of a diffusion process corresponding to $\mathcal{L}_{\alpha(\cdot)}$.
Consequently, we can capture more-general diffusion processes on $[0, 1]$ 
via Theorem~\ref{Thm:convergence-fractional-Bernstein-2}
and Proposition~\ref{Prop:diffusion semigroup-2} as follows.

\begin{pr}\label{Prop:diffusion semigroup-2}
    We assume that 
    $\alpha=\alpha(x) \colon [0, 1] \to (0, \infty)$
    is continuous and $\alpha(x)^{-1}=1/\alpha(x)$ is 
    Lipschitz continuous on $[0, 1]$. 
    For $x \in [0, 1]$, let 
    $(X_{\alpha(\cdot), x}(t))_{t \ge 0}$ be the 
    one-dimensional diffusion process
    that solves the stochastic differential equation 
        \begin{equation}\label{Eq:SDE-2}
            \dd X_{\alpha(\cdot), x}(t)
            =\sqrt{\frac{1}{\alpha(X_{\alpha(\cdot), x}(t))}X_{\alpha(\cdot), x}(t)(1-X_{\alpha(\cdot), x}(t))} \, \dd W(t), 
            \qquad X_{\alpha(\cdot), x}(0)=x,
        \end{equation}
    where $(W(t))_{t \ge 0}$ is standard one-dimensional Brownian motion. 
    Then, we have 
        \begin{equation*}\label{Eq:Bernstein-coinidence-2}
            T_{\alpha(\cdot)}(t) f(x)=\E\big[f\big(T_{\alpha(\cdot), x}(t)\big)\big], 
            \qquad f \in C([0,1]), \, x \in [0,1], \, t \ge 0.
        \end{equation*}
\end{pr}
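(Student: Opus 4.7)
The plan is to mirror the proof of Proposition~\ref{Prop:diffusion semigroup} almost verbatim, since the only structural change is that the constant $\alpha$ in the diffusion coefficient has been replaced by the continuous function $\alpha(x)$, which is uniformly bounded and uniformly bounded away from zero thanks to \eqref{Eq:alpha-bound}. I first note that the coefficient $\sigma(x)^2 := (2\alpha(x))^{-1} x(1-x)$ appearing as twice the second-order coefficient of $\mathcal{L}_{\alpha(\cdot)}$ is continuous and bounded on $[0,1]$, so the one-dimensional SDE \eqref{Eq:SDE-2} falls within the standard framework for martingale problems on a compact interval with continuous (possibly degenerate) diffusion coefficient and zero drift.

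Next, I would apply \cite[Theorem~32.11]{Kallenberg} to \eqref{Eq:SDE-2}: since $\sigma(x) = \sqrt{x(1-x)/\alpha(x)}$ is continuous and bounded on $[0,1]$ and the drift is zero, this theorem yields the existence of a solution to the martingale problem for $(\mathcal{L}_{\alpha(\cdot)}, \delta_x)$ for every $x \in [0,1]$, its uniqueness, and the fact that
\[
\widetilde{T}_{\alpha(\cdot)}(t) f(x) := \mathbb{E}\big[f\big(X_{\alpha(\cdot), x}(t)\big)\big], \qquad f \in C([0,1]), \ x \in [0,1], \ t \ge 0,
\]
defines a Feller semigroup on $C([0,1])$ whose infinitesimal generator is a closed extension of $(\mathcal{L}_{\alpha(\cdot)}, C^2([0,1]))$. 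In particular, this extension agrees with the minimal closed extension $(\mathcal{L}_{\alpha(\cdot)}, \mathrm{Dom}(\mathcal{L}_{\alpha(\cdot)}))$ on $C^2([0,1])$, and since $C^2([0,1])$ is a core for $(\mathcal{L}_{\alpha(\cdot)}, \mathrm{Dom}(\mathcal{L}_{\alpha(\cdot)}))$ by the cited references \cite[Theorems~1.6.11 and 6.2.6]{Alt-C}, the two generators coincide.

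Finally, because a Feller semigroup on $C([0,1])$ is uniquely determined by its generator (\cite[Lemma~17.5]{Kallenberg}), the semigroup $\widetilde{T}_{\alpha(\cdot)}(t)$ must agree with the Feller semigroup $T_{\alpha(\cdot)}(t)$ constructed via Hille--Yosida from $(\mathcal{L}_{\alpha(\cdot)}, \mathrm{Dom}(\mathcal{L}_{\alpha(\cdot)}))$ in the discussion preceding Theorem~\ref{Thm:convergence-fractional-Bernstein-2}. This yields the asserted probabilistic representation.

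The main obstacle I anticipate is the justification that a solution of \eqref{Eq:SDE-2} exists in the sense intended by the statement. Since the diffusion coefficient $\sqrt{x(1-x)/\alpha(x)}$ is only H\"older continuous of exponent $1/2$ at the endpoints $\{0,1\}$, classical Lipschitz-based strong existence fails there; however, weak existence follows from the martingale problem solution produced by \cite[Theorem~32.11]{Kallenberg}, while pathwise uniqueness is supplied by the Yamada--Watanabe criterion applied to the $1/2$-H\"older diffusion coefficient. If one prefers to avoid any strong/weak distinction, the statement of Proposition~\ref{Prop:diffusion semigroup-2} only needs the law of $X_{\alpha(\cdot), x}(t)$, and this law is determined by the well-posed martingale problem, exactly as in the proof of Proposition~\ref{Prop:diffusion semigroup}.
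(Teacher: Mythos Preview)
Your proposal is correct and follows exactly the route the paper intends: the paper explicitly omits the proof of Proposition~\ref{Prop:diffusion semigroup-2} as being ``similar to that of Proposition~\ref{Prop:diffusion semigroup}'', and you have reproduced that argument (continuity and boundedness of the diffusion coefficient, \cite[Theorem~32.11]{Kallenberg} for the Feller property and identification of the generator, and \cite[Lemma~17.5]{Kallenberg} for uniqueness), together with the Yamada--Watanabe criterion the paper itself invokes just before the proposition.
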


\begin{proof}
We omit the details of the proof because it is essentially same as that of Proposition~\ref{Prop:diffusion semigroup}.
On the other hand, we should note that \eqref{Eq:SDE-2} has the unique strong solution. 
Indeed, the existence of the weak solution to \eqref{Eq:SDE-2}
holds true since the diffusion coefficient 
    \[
        \sigma(x):=\sqrt{\alpha(x)^{-1}x(1-x)}, \qquad x \in [0, 1],
    \]
of \eqref{Eq:SDE-2} is bounded and continuous on $[0, 1]$. 
Moreover, it follows from the Lipschitz continuity of $1/\alpha(x)$ that 
    \begin{align*}
        |\sigma(x)-\sigma(y)|
        &=\left|\sqrt{\alpha(x)^{-1}x(1-x)} - \sqrt{\alpha(y)^{-1}y(1-y)} \right|\\
        &\le \sqrt{\left|\alpha(x)^{-1}x(1-x) - \alpha(y)^{-1}y(1-y) \right|} \\
        &\le \sqrt{\left|\alpha(x)^{-1}-\alpha(y)^{-1} \right|x(1-x)+\alpha(y)^{-1}|x(1-x)-y(1-y)|} \\
        &\le \sqrt{C_{4,3}|x-y|+3\alpha_L^{-1}|x-y|} 
        \le C_{4,4}|x-y|^{1/2}, \qquad x, y \in [0, 1],
    \end{align*}
which implies the pathwise uniqueness of \eqref{Eq:SDE-2}. 
Therefore, the Yamada--Watanabe theorem (cf.~\cite[Theorem~1]{YW})
leads to the existence of the unique strong solution to \eqref{Eq:SDE-2}. 
\end{proof}

\smallskip
\noindent
\textbf{Acknowledgements.}  
The first-named author was supported by 
JSPS KAKENHI Grant Numbers 19H00643 and 25K07056 and The Kyoto University Foundation. 
The second-named author was supported by
JSPS KAKENHI Grant Numbers 23K12986 and 23K03155.

\bigskip
\noindent
\textbf{Data availability} 
No datasets were generated or analyzed during the current study.

\section*{Declarations}

\noindent
\textbf{Conflict of interest} 
On behalf of all authors, the corresponding author states that there is no conflict of interest.

\end{document}